\theoremstyle{plain}
\newtheorem{theorem}                 {Theorem}      [section]
\newtheorem{proposition}  [theorem]  {Proposition}
\newtheorem{corollary}    [theorem]  {Corollary}
\newtheorem{conjecture}   []  {Conjecture}
\newtheorem{problem}   []  {Problem}
\newtheorem{lemma}        [theorem]  {Lemma}
\theoremstyle{definition}
\newtheorem{remark}       [theorem]  {Remark}
\numberwithin{equation}{section}
\DeclareMathOperator{\trace}{trace}
\DeclareMathOperator{\grad}{grad}
\DeclareMathOperator{\Div}{div}
\DeclareMathOperator{\Id}{Id}
\DeclareMathOperator{\ricci}{Ricci}
\begin{document}

\title[Classification of Biharmonic submanifolds]
{New results toward the classification of Biharmonic submanifolds in $\mathbb{S}^{n}$}

\date{\today}

\author{A.~Balmu\c s}
\address{Faculty of Mathematics, ``Al.I.~Cuza'' University of Iasi\\
\newline
Bd. Carol I Nr. 11 \\
700506 Iasi, ROMANIA}
\email{adina.balmus@uaic.ro}
\author{S. Montaldo}
\address{Universit\`a degli Studi di Cagliari\\
Dipartimento di Matematica\\
\newline
Via Ospedale 72\\
09124 Cagliari, ITALIA} \email{montaldo@unica.it}
\author{C. Oniciuc}
\address{Faculty of Mathematics, ``Al.I.~Cuza'' University of Iasi\\
\newline
Bd. Carol I Nr. 11 \\
700506 Iasi, ROMANIA} \email{oniciucc@uaic.ro}

\subjclass[2010]{58E20}

\thanks{The first author was supported by Grant POSDRU/89/1.5/S/49944, Romania. The second author was supported by Contributo d'Ateneo, University of Cagliari, Italy. The third author was supported  by a grant of the Romanian National Authority for
Scientific Research, CNCS -- UEFISCDI, project number PN-II-RU-TE-2011-3-0108; and by Regione Autonoma della Sardegna, Visiting Professor Program.}

\begin{abstract}
We prove some new rigidity results for proper biharmonic immersions in ${\mathbb S}^n$ of the following types: Dupin hypersurfaces; hypersurfaces, both compact and non-compact, with bounded norm of the second fundamental form; hypersurfaces satisfying intrinsic properties; PMC submanifolds;  parallel submanifolds.

\end{abstract}

\keywords{biharmonic submanifolds, CMC submanifolds, parallel submanifolds}

\maketitle

\section{Introduction}

Let $\varphi:M\to (N,h)$ be an immersion  of a manifold $M$ into a
Riemannian manifold $(N,h)$. We say that $\varphi$ is {\em
biharmonic}, or $M$ is a {\em biharmonic submanifold}, if its mean curvature vector
field $H$ satisfies the following equation
\begin{eqnarray}\label{eq: bih_eq}
\tau_2(\varphi)=- m\left(\Delta H + \trace{R^{N}}(
d\varphi(\cdot), H) d\varphi(\cdot)\right)=0,
\end{eqnarray}
where $\Delta$ denotes the rough Laplacian on sections of the
pull-back bundle $\varphi^{-1}(TN)$ and $R^N$ denotes the curvature
operator on $(N,h)$. The section $\tau_2(\varphi)$ is called the
{\em bitension field}.

When $M$ is compact, the biharmonic condition arises from a
variational problem for maps: for an arbitrary smooth map
$\varphi:(M,g)\to (N,h)$ we define
$$
E_{2}\left( \varphi \right) = \frac{1}{2} \int_{M} |\tau(\varphi)|^{2}\, v_{g},
$$
where $\tau(\varphi)=\trace\nabla d\varphi$ is the {\it tension field}. The functional $E_2$ is called  the {\em bienergy functional}. When $\varphi:(M,\varphi^{\ast}h)\to (N,h)$ is an immersion, the tension
field has the expression $\tau(\varphi)=mH$ and \eqref{eq: bih_eq} is equivalent to $\varphi$ being a critical point of $E_2$.

Obviously, any minimal immersion ($H=0$) is biharmonic. The non-harmonic biharmonic immersions  are called {\it proper biharmonic}.

The study of proper biharmonic submanifolds is nowadays becoming a
very active subject and its popularity initiated with the
challenging conjecture of B-Y.~Chen (see the recent book \cite{C11}): {\em any biharmonic submanifold
in the Euclidean space is minimal}.

Chen's conjecture was generalized to: {\em any biharmonic submanifold in a
Riemannian manifold with non-positive sectional curvature is
minimal}, but this was proved not to hold. Indeed, in \cite{OT10}, Y.-L.~Ou and L.~Tang
constructed examples of proper biharmonic hypersurfaces
in a $5$-dimensional space of non-constant negative sectional
curvature.

Yet, the conjecture is still open in its full generality for ambient
spaces with constant non-positive sectional curvature, although it
was proved to be true in numerous cases when additional geometric
properties for the submanifolds were assumed (see, for example,
\cite{BMO08,CMO02,C91,D98,D92,HV95}).

By way of contrast, as we shall detail in Section~\ref{sec:
bih-sub}, there are several families of examples of proper
biharmonic submanifolds in the $n$-dimensional unit Euclidean sphere
$\mathbb{S}^{n}$.  For simplicity we shall denote these classes by {\bf B1}, {\bf B2}, {\bf B3} and {\bf B4}.

 The goal of this paper is to continue the study of
proper biharmonic submanifolds in $\mathbb{S}^{n}$ in order to achieve their classification.
This program was initiated for the very first time in \cite{J86} and then developed
in \cite{BMO12} -- \cite{BO09}, \cite{CMO02,CMO01,NU11,NU12, O02}.

In the following, by a rigidity result for proper biharmonic submanifolds we mean:\\
{\em find under what conditions a proper biharmonic submanifold in  ${\mathbb S}^n$ is one of the main examples {\bf B1}, {\bf B2}, {\bf B3} and {\bf B4}}.

We prove rigidity results for the following types of submanifolds in  ${\mathbb S}^n$: Dupin hypersurfaces; hypersurfaces, both compact and non-compact, with bounded norm of the second fundamental form; hypersurfaces satisfying intrinsic geometric properties; PMC submanifolds;  parallel submanifolds.

Moreover, we include in this paper two results of  J.H.~Chen published in \cite{C93}, in Chinese. We give a complete proof of these results using the invariant formalism and shortening the original proofs.

\vspace{2mm}

{\bf Conventions.}
Throughout this paper all manifolds, metrics, maps are assumed to be smooth, i.e. $C^\infty$. All manifolds are assumed to be connected. The following  sign conventions are used
$$
\Delta V=-\trace\nabla^2 V\,,\qquad R^N(X,Y)=[\nabla_X,\nabla_Y]-\nabla_{[X,Y]},
$$
where $V\in C(\varphi^{-1}(TN))$ and $X,Y\in C(TN)$.
Moreover, the Ricci and scalar curvature $s$ are defined as
$$
\langle \ricci(X),Y\rangle=\ricci(X,Y)=\trace (Z\to R(Z,X)Y)), \quad s=\trace \ricci,
$$
where $X,Y,Z\in C(TN)$.

\vspace{2mm}

{\bf Acknowledgements.}
The authors would like  to thank professor Jiaping Wang for some helpful discussions and  Juan Yang for the accurate translation of \cite{C93}. The third author would like to thank the Department of Mathematics and Informatics of the University of Cagliari for the warm hospitality.

\section{Biharmonic immersions in   ${\mathbb S}^n$}\label{sec: bih-sub}

The key ingredient in the study of biharmonic submanifolds is the
splitting of the bitension field with respect to its normal and
tangent components. In the case when the ambient space is the unit Euclidean sphere we have the following characterization.

\begin{theorem}[\cite{C84, O02}]\label{th: bih subm S^n}
An immersion $\varphi:M^m\to\mathbb{S}^n$ is biharmonic if and only if
\begin{equation}\label{eq: caract_bih_spheres}
\left\{
\begin{array}{l}
\ \Delta^\perp {H}+\trace B(\cdot,A_{H}\cdot)-m\,{H}=0,
\vspace{2mm}
\\
\ 2\trace A_{\nabla^\perp_{(\cdot)}{H}}(\cdot)
+\dfrac{m}{2}\grad {|H|}^2=0,
\end{array}
\right.
\end{equation}
where $A$ denotes the Weingarten operator, $B$ the second
fundamental form, ${H}$ the mean curvature vector field, $|H|$ the mean curvature function,
$\nabla^\perp$ and $\Delta^\perp$ the connection and the Laplacian
in the normal bundle of $\varphi$, respectively.
\end{theorem}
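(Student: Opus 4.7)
The plan is to reduce the bitension field equation to an identity involving only the intrinsic data $A$, $B$, $H$, $\nabla^\perp$ of $\varphi$, and then to read off its tangent and normal components separately.

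First, I would eliminate the curvature term in \eqref{eq: bih_eq}. Since $\mathbb{S}^n$ has constant sectional curvature $1$, one has $R^{\mathbb{S}^n}(X,Y)Z=\langle Y,Z\rangle X-\langle X,Z\rangle Y$. For a local orthonormal tangent frame $\{e_i\}_{i=1}^m$ on $M$, using that $H$ is normal to $M$,
\begin{equation*}
\trace R^{\mathbb{S}^n}(d\varphi(\cdot),H)d\varphi(\cdot)=\sum_{i=1}^m\bigl(\langle H,e_i\rangle e_i-\langle e_i,e_i\rangle H\bigr)=-mH,
\end{equation*}
so \eqref{eq: bih_eq} becomes $\Delta H-mH=0$.

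Next, I would decompose $\Delta H\in C(\varphi^{-1}T\mathbb{S}^n)$ along $TM\oplus T^\perp M$. Applying the Gauss and Weingarten formulas iteratively to $\nabla^{\mathbb{S}^n}_{e_i}H=-A_H e_i+\nabla^\perp_{e_i}H$, working in a frame geodesic at the reference point, and collecting normal versus tangent contributions, I would obtain
\begin{align*}
(\Delta H)^\perp&=\Delta^\perp H+\trace B(\cdot,A_H\cdot),\\
(\Delta H)^\top&=\sum_{i=1}^m(\nabla_{e_i}A)(H,e_i)+2\sum_{i=1}^m A_{\nabla^\perp_{e_i}H}e_i,
\end{align*}
where $(\nabla_X A)(H,Y):=\nabla^M_X(A_H Y)-A_H(\nabla^M_X Y)-A_{\nabla^\perp_X H}Y$. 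The remaining trace of $(\nabla A)$ is then handled by the Codazzi equation, which in the constant curvature ambient reads $(\nabla_X A)(H,Y)=(\nabla_Y A)(H,X)$. Applying it with $X=e_i$, then pairing with an arbitrary tangent $V$, and using the traces $\trace A_H=m|H|^2$ and $\trace A_{\nabla^\perp_V H}=m\langle \nabla^\perp_V H,H\rangle=\tfrac{m}{2}V|H|^2$, a short calculation yields the key identity
\begin{equation*}
\sum_{i=1}^m(\nabla_{e_i}A)(H,e_i)=\tfrac{m}{2}\grad|H|^2.
\end{equation*}

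Substituting, $(\Delta H)^\top=2\trace A_{\nabla^\perp_{(\cdot)}H}(\cdot)+\tfrac{m}{2}\grad|H|^2$, and since $mH$ is normal, the equation $\Delta H-mH=0$ decouples into its normal and tangent parts, which are exactly the two equations of \eqref{eq: caract_bih_spheres}. The main obstacle is the bookkeeping in the double application of Gauss and Weingarten when expanding $\Delta H$, combined with the Codazzi-based trace identity, which is the precise source of both the factor $2$ in front of the $A_{\nabla^\perp H}$-term and the coefficient $m/2$ in front of $\grad|H|^2$ in the tangent equation.
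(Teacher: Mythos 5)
Your proposal is correct and follows the standard argument from the cited sources \cite{C84,O02} (the paper itself states Theorem~\ref{th: bih subm S^n} without proof): the curvature trace for $\mathbb{S}^n$ reduces the bitension field to $\Delta H-mH=0$, the Gauss--Weingarten splitting of $\Delta H$ gives exactly the normal and tangential expressions you wrote, and the Codazzi-plus-total-symmetry argument together with $\trace A_H=m|H|^2$ and $\trace A_{\nabla^\perp_V H}=\tfrac{m}{2}V|H|^2$ correctly produces $\sum_i(\nabla_{e_i}A)(H,e_i)=\tfrac{m}{2}\grad|H|^2$. All coefficients, including the factor $2$ and the $m/2$, check out.
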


In the codimension one case, denoting by $A=A_\eta$ the shape operator with respect to a (local) unit section $\eta$ in the normal bundle and
putting   $f=(\trace A)/m$, the above result reduces to the following.
\begin{corollary}[\cite{O02}]\label{cor: caract_hypersurf_bih}
Let $\varphi:M^m\to\mathbb{S}^{m+1}$ be an orientable hypersurface. Then $\varphi$ is biharmonic if and only if
\begin{equation}\label{eq: caract_bih_hipersurf_spheres}
\left\{
\begin{array}{l}
{\rm (i)}\quad \Delta f=(m-|A|^2) f,
\\ \mbox{} \\
{\rm (ii)}\quad A(\grad f)=-\dfrac{m}{2}f\grad f.
\end{array}
\right.
\end{equation}
\end{corollary}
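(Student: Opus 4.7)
The plan is to specialize the biharmonic system from Theorem \ref{th: bih subm S^n} to codimension one by writing $H = f\eta$, where $\eta$ is the chosen local unit normal and $f = (\trace A)/m$ is the mean curvature function. The only subtle structural point is that the normal bundle is a real line bundle and $\eta$ has constant unit length, so $\nabla^\perp_X \eta$ is simultaneously orthogonal to $\eta$ and confined to a rank-one bundle; it therefore vanishes identically. Consequently $\eta$ is parallel in the normal bundle, and $\nabla^\perp_X(f\eta) = (Xf)\eta$, which gives $\Delta^\perp(f\eta) = (\Delta f)\eta$ with $\Delta$ the (rough) Laplacian on $M$.

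With this in hand, I would project the first equation of \eqref{eq: caract_bih_spheres} onto the $\eta$ direction. Using $A_H = f A$ together with the codimension-one identity $B(X,Y) = \langle AX, Y\rangle \eta$, I compute $\trace B(\cdot, A_H \cdot) = f\,\trace B(\cdot, A\cdot) = f|A|^2 \eta$. Combined with $\Delta^\perp H = (\Delta f)\eta$ and $-mH = -mf\eta$, collecting the coefficients of $\eta$ yields exactly condition (i): $\Delta f = (m - |A|^2)f$.

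For the second equation, I would fix a local orthonormal frame $\{e_i\}$ on $M$. From $\nabla^\perp_{e_i} H = (e_i f)\eta$ we get $A_{\nabla^\perp_{e_i} H}(e_i) = (e_i f)\, A(e_i)$, and summing over $i$ produces $A(\grad f)$. Since $|H|^2 = f^2$ forces $\grad|H|^2 = 2f\,\grad f$, the second equation collapses to $2 A(\grad f) + m f \grad f = 0$, which is precisely condition (ii). Each manipulation is a biconditional, so the converse direction requires no separate argument.

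I do not anticipate a genuine obstacle: the statement is essentially an algebraic rearrangement of Theorem \ref{th: bih subm S^n} under the codimension-one hypothesis. The only mildly delicate step is the reduction $\Delta^\perp(f\eta) = (\Delta f)\eta$, which relies on the parallelism of $\eta$ in a line bundle; everything else reduces to trace computations in an orthonormal frame, together with the defining identity for the scalar second fundamental form of a hypersurface.
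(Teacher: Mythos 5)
Your proposal is correct and is exactly the standard specialization intended by the paper, which states this corollary as the codimension-one reduction of Theorem~\ref{th: bih subm S^n} and cites \cite{O02} without reproducing the computation. All the key points --- $\nabla^\perp\eta=0$ for a unit section of a rank-one normal bundle, hence $\Delta^\perp(f\eta)=(\Delta f)\eta$, together with $A_H=fA$, $\trace B(\cdot,A_H\cdot)=f|A|^2\eta$, and $\grad|H|^2=2f\grad f$ --- are handled properly, and each step is indeed an equivalence.
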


A special class of immersions in $\mathbb{S}^n$ consists of the parallel mean curvature immersions (PMC), that is immersions such that $\nabla^{\perp}H=0$. For this class of immersions Theorem~\ref{th: bih subm S^n} reads as follows.

\begin{corollary}[\cite{BO12}]\label{th: caract_bih_pmc}
Let $\varphi:M^m\to\mathbb{S}^n$ be a PMC  immersion. Then $\varphi$ is biharmonic if and only if
\begin{equation}\label{eq: caract_bih_Hparallel_I}
\trace B(A_H(\cdot),\cdot)=mH,
\end{equation}
or equivalently,
\begin{equation}\label{eq: caract_bih_Hparallel_II}
\left\{
\begin{array}{ll}
\langle A_H, A_\xi\rangle=0,\quad \forall\xi\in C(NM)\, \text{with}\,\, \xi\perp H,
\\ \mbox{} \\
|A_H|^2=m|H|^2,
\end{array}
\right.
\end{equation}
where $NM$ denotes the normal bundle of $M$ in $\mathbb{S}^n$.
\end{corollary}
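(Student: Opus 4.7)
The plan is to deduce the statement directly from Theorem~\ref{th: bih subm S^n} by exploiting the PMC hypothesis to kill two of the terms in \eqref{eq: caract_bih_spheres}, and then to rephrase the surviving vector equation in tensorial form by pairing it with an arbitrary normal field.

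First, I would record the elementary consequences of $\nabla^\perp H=0$. Since $X|H|^2=2\langle\nabla^\perp_XH,H\rangle=0$ for every $X\in C(TM)$, the mean curvature function $|H|$ is constant, so $\grad|H|^2=0$. Moreover $\nabla^\perp_{(\cdot)}H=0$ implies both that $\trace A_{\nabla^\perp_{(\cdot)}H}(\cdot)=0$ and that $\Delta^\perp H=-\trace(\nabla^\perp)^2H=0$. Inserting this in \eqref{eq: caract_bih_spheres}, the second (tangential) equation becomes $0=0$ and the first (normal) equation reduces precisely to \eqref{eq: caract_bih_Hparallel_I}:
\begin{equation*}
\trace B(A_H(\cdot),\cdot)=mH.
\end{equation*}
This proves the equivalence between biharmonicity and \eqref{eq: caract_bih_Hparallel_I}.

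Next I would show that \eqref{eq: caract_bih_Hparallel_I} and \eqref{eq: caract_bih_Hparallel_II} are equivalent. The idea is to test the normal-valued equation \eqref{eq: caract_bih_Hparallel_I} against an arbitrary section $\xi\in C(NM)$. Using the defining relation $\langle B(X,Y),\xi\rangle=\langle A_\xi X,Y\rangle$ of the Weingarten operator and choosing a local orthonormal frame $\{E_i\}$ on $M$, one has
\begin{equation*}
\langle \trace B(A_H(\cdot),\cdot),\xi\rangle
=\sum_{i=1}^m\langle A_\xi E_i,A_H E_i\rangle
=\langle A_H,A_\xi\rangle,
\end{equation*}
where $\langle\,,\,\rangle$ in the last term is the Hilbert--Schmidt product of self-adjoint endomorphisms of $TM$. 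Thus \eqref{eq: caract_bih_Hparallel_I} is equivalent to
$\langle A_H,A_\xi\rangle=m\langle H,\xi\rangle$ for every $\xi\in C(NM)$.

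Finally, I would decompose $\xi=\lambda H+\xi^\perp$ with $\xi^\perp\perp H$, valid at least where $H\neq 0$ (the case $H=0$ being trivial, and in fact $|H|$ is constant so either $H\equiv 0$ or $H$ never vanishes). Testing with $\xi=H$ gives $|A_H|^2=m|H|^2$, while testing with $\xi\perp H$ gives $\langle A_H,A_\xi\rangle=0$; conversely, these two conditions together reconstruct $\langle A_H,A_\xi\rangle=m\langle H,\xi\rangle$ by linearity. This yields the equivalence with \eqref{eq: caract_bih_Hparallel_II}. No step is a genuine obstacle; the only care needed is the bookkeeping between the normal-bundle inner product and the Hilbert--Schmidt inner product on shape operators, and the observation that the PMC assumption forces $|H|$ to be constant so that the tangential part of the bitension field vanishes automatically.
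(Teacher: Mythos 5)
Your proof is correct, and the paper itself offers no proof of this corollary (it is quoted from \cite{BO12}); your derivation --- specializing Theorem~\ref{th: bih subm S^n} using $\nabla^\perp H=0$ to kill the tangential equation and the terms $\Delta^\perp H$ and $\grad|H|^2$, then pairing the surviving normal equation with $\xi=H$ and with $\xi\perp H$ to get \eqref{eq: caract_bih_Hparallel_II} --- is exactly the standard argument used in the cited reference. The only points requiring care, the identification $\langle\trace B(A_H(\cdot),\cdot),\xi\rangle=\langle A_H,A_\xi\rangle$ and the dichotomy $H\equiv 0$ or $H$ nowhere zero coming from $|H|$ constant, are both handled properly.
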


We now list the main examples of proper biharmonic immersions in $\mathbb{S}^n$.

\begin{list}{\labelitemi}{\leftmargin=2em\itemsep=1.5mm\topsep=0mm}
\item[{\bf B1}.] The canonical inclusion of the small hypersphere
\begin{equation*}\label{eq: small_hypersphere}
\mathbb{S}^{n-1}(1/\sqrt 2)=\left\{(x,1/\sqrt 2)\in\mathbb{R}^{n+1}: x\in \mathbb{R}^n, |x|^2=1/2\right\}\subset\mathbb{S}^{n}.
\end{equation*}
\item[{\bf B2}.] The canonical inclusion of the standard (extrinsic) products of spheres
\begin{equation*}\label{eq: product_spheres}
\mathbb{S}^{n_1}(1/\sqrt 2)\times\mathbb{S}^{n_2}(1/\sqrt 2)=\left\{(x,y)\in\mathbb{R}^{n_1+1}\times\mathbb{R}^{n_2+1}, |x|^2=|y|^2=1/2\right\}\subset\mathbb{S}^{n},
\end{equation*}
$n_1+n_2=n-1$ and $n_1\neq n_2$.
\item[{\bf B3}.] The maps $\varphi=\imath\circ\phi:M\to \mathbb{S}^n$, where $\phi:M\to \mathbb{S}^{n-1}(1/\sqrt 2)$ is a minimal immersion, and $\imath:\mathbb{S}^{n-1}(1/\sqrt 2)\to\mathbb{S}^n$ denotes the canonical inclusion.

\item[{\bf B4}.] The maps $\varphi=\imath\circ(\phi_1\times\phi_2): M_1\times M_2\to \mathbb{S}^n$, where $\phi_i:M_i^{m_i}\to\mathbb{S}^{n_i}(1/\sqrt 2)$, $0 < m_i \leq n_i$, $i=1,2$, are minimal immersions, $m_1\neq m_2$, $n_1+n_2=n-1$, and $\imath:\mathbb{S}^{n_1}(1/\sqrt 2)\times\mathbb{S}^{n_2}(1/\sqrt 2)\to \mathbb{S}^n$ denotes the canonical inclusion.
\end{list}

\begin{remark}
\begin{itemize}
\item[(i)] The proper biharmonic  immersions of class {\bf B3} are pseudo-umbilical, i.e. $A_H=|H|^2\Id$, have parallel mean curvature vector field and mean curvature $|H|=1$. Clearly, $\nabla A_H=0$.

\item[(ii)] The proper biharmonic  immersions of class {\bf B4} are no longer pseudo-umbilical, but still have parallel mean curvature vector field and their mean curvature is $|H|={|m_1-m_2|}/{m}\in(0,1)$, where $m=m_1+m_2$. Moreover, $\nabla A_H=0$ and the principal curvatures in the direction of $H$, i.e. the eigenvalues of $A_H$, are constant on $M$ and given by $\lambda_1=\ldots=\lambda_{m_1}=({m_1-m_2})/{m}$, $\lambda_{m_1+1}=\ldots=\lambda_{m_1+m_2}=-({m_1-m_2})/{m}$. Specific B4 examples were given by W.~Zhang in \cite{Z11} and generalized in \cite{BMO08a, WW12}.
\end{itemize}
\end{remark}

When a biharmonic immersion has constant mean curvature (CMC) the following bound for $|H|$ holds.

\begin{theorem}[\cite{O03}]\label{teo:h=cst-b3}
Let $\varphi:M\to\mathbb{S}^n$ be a CMC proper biharmonic  immersion. Then $|H|\in(0,1]$, and $|H|=1$ if and only if $\varphi$ induces a minimal immersion of $M$ into $\mathbb{S}^{n-1}(1/\sqrt 2)\subset\mathbb{S}^n$, that is $\varphi$ is {\bf B3}.
\end{theorem}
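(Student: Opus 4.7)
The plan is to exploit the fact that the CMC hypothesis simplifies the biharmonic system of Theorem~\ref{th: bih subm S^n}, and then to reduce the bound on $|H|$ to a pointwise inequality on the shape operator $A_H$.

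First, I will pair the normal part of the biharmonic equation with $H$. Using $\langle \trace B(\cdot, A_H \cdot), H\rangle = |A_H|^2$ (which follows from $\langle B(X,Y),H\rangle = \langle A_H X, Y\rangle$) together with the Bochner-type identity
$$\tfrac12 \Delta |H|^2 = \langle \Delta^\perp H, H\rangle - |\nabla^\perp H|^2,$$
combined with $|H|$ constant (so $\Delta|H|^2=0$), I obtain the pointwise identity
$$|\nabla^\perp H|^2 + |A_H|^2 = m|H|^2.$$
Since $A_H$ is self-adjoint with $\trace A_H = m|H|^2$, Cauchy--Schwarz gives $|A_H|^2 \geq m|H|^4$, whence $m|H|^4 \leq m|H|^2$ and thus $|H|\leq 1$; properness rules out $|H|=0$, so $|H|\in (0,1]$.

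Next, to analyse the equality case, note that $|H|=1$ forces both $\nabla^\perp H = 0$ (so $\varphi$ is PMC) and equality in Cauchy--Schwarz for $A_H$; together with $\trace A_H = m$, the latter gives $A_H = \Id$. I will then consider the map $\widetilde\varphi := \varphi + H : M \to \mathbb{R}^{n+1}$. Using the Gauss formula for $\mathbb{S}^n\subset\mathbb{R}^{n+1}$ (noting $\langle X,H\rangle=0$) followed by the Weingarten decomposition for $M\subset\mathbb{S}^n$, one finds
$$d\widetilde\varphi(X) = X + \nabla^{\mathbb{S}^n}_X H = X - A_H X + \nabla^\perp_X H = 0,$$
so $\widetilde\varphi \equiv a$ for some fixed $a\in\mathbb{R}^{n+1}$. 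A direct arithmetic check gives $|a|^2 = |\varphi|^2 + 2\langle \varphi,H\rangle + |H|^2 = 2$ and $\langle \varphi, a\rangle = 1$, placing $\varphi(M)$ in the intersection of $\mathbb{S}^n$ with an affine hyperplane, i.e.\ in a small hypersphere isometric to $\mathbb{S}^{n-1}(1/\sqrt 2)$.

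Finally, writing $\varphi = \imath\circ \phi$ with $\phi: M\to \mathbb{S}^{n-1}(1/\sqrt 2)$, the composition formula for the tension field together with the fact that $\imath$ is totally umbilical with $|H^\imath|=1$ yields $H^\varphi = d\imath(H^\phi) + H^\imath$, where $d\imath(H^\phi)\perp H^\imath$; comparing norms gives $|H^\phi|^2 = |H^\varphi|^2 - 1 = 0$, so $\phi$ is minimal, i.e.\ $\varphi$ is of type \textbf{B4}--no, of type \textbf{B3}. The reverse implication is immediate from the remark following Corollary~\ref{th: caract_bih_pmc}. I expect the main conceptual obstacle to lie in the equality case: one must spot that $\varphi + H$ is the correct quantity to differentiate, after which the remainder reduces to routine Gauss--Weingarten bookkeeping.
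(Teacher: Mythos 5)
Your proof is correct and follows essentially the same route as the cited source for this theorem: pairing the normal bitension equation with $H$, using constancy of $|H|$ to obtain $|\nabla^\perp H|^2+|A_H|^2=m|H|^2$, the Cauchy--Schwarz bound $|A_H|^2\geq m|H|^4$, and, in the equality case, showing $\varphi+H$ is constant so that $\varphi(M)$ lies in $\mathbb{S}^{n-1}(1/\sqrt2)$ with the induced immersion minimal. (Only a cosmetic remark: the stray ``\textbf{B4}--no'' should simply read \textbf{B3}.)
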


\section{Biharmonic hypersurfaces in spheres}

The first case to look at is that of CMC proper biharmonic hypersurfaces in $\mathbb{S}^{m+1}$.

\begin{theorem}[\cite{BMO08, BO12}]
Let $\varphi:M^m\to \mathbb{S}^{m+1}$ be a CMC proper biharmonic hypersurface. Then
\begin{itemize}
\item[(i)] $|A|^2=m$;
\item[(ii)] the scalar curvature $s$ is constant and positive, $s=m^2(1+|H|^2)-2m$;
\item[(iii)] for $m>2$, $|H|\in(0,({m-2})/{m}]\cup\{1\}$. Moreover, $|H|=1$ if and only if $\varphi(M)$ is an open subset of the small hypersphere $\mathbb{S}^m(1/\sqrt 2)$, and $|H|=({m-2})/{m}$ if and only if $\varphi(M)$ is an open subset of the standard product $\mathbb{S}^{m-1}(1/\sqrt 2)\times\mathbb{S}^1(1/\sqrt 2)$.
\end{itemize}
\end{theorem}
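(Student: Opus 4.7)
The plan is to bootstrap from the scalar biharmonic system of Corollary~\ref{cor: caract_hypersurf_bih}. Since $\varphi$ is CMC, I would choose the unit normal so that $f=|H|$ is a positive constant. Then $\grad f\equiv 0$, so equation \eqref{eq: caract_bih_hipersurf_spheres}(ii) is automatic, while (i) becomes $0=(m-|A|^2)f$, forcing $|A|^2=m$ because $f\neq 0$. This proves (i). For (ii), the Gauss equation for a hypersurface in $\mathbb{S}^{m+1}$ gives
\[
s=m(m-1)+(\trace A)^2-|A|^2=m(m-1)+m^2|H|^2-m=m^2(1+|H|^2)-2m,
\]
which is manifestly constant and positive.

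For (iii), the case $|H|=1$ is already handled by Theorem~\ref{teo:h=cst-b3} and gives class \textbf{B3}, which for a hypersurface forces the image to be an open subset of $\mathbb{S}^m(1/\sqrt 2)$. The core of the argument is to show that if $|H|<1$ then in fact $|H|\leq (m-2)/m$. I would work with the traceless part $\phi=A-|H|\,\Id$, so that $|\phi|^2=|A|^2-m|H|^2=m(1-|H|^2)$ is a constant nonnegative quantity. Since $|\phi|^2$ is constant, the Simons-type identity for CMC hypersurfaces in $\mathbb{S}^{m+1}$ of Nomizu--Smyth,
\[
\tfrac12\Delta|\phi|^2=|\nabla\phi|^2+|\phi|^2\bigl(m+m|H|^2-|\phi|^2\bigr)-m|H|\,\trace\phi^3,
\]
collapses, after substituting $|\phi|^2=m(1-|H|^2)$, to the pointwise identity
\[
|\nabla\phi|^2=m|H|\,\trace\phi^3-2m^2|H|^2(1-|H|^2).
\]

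The remaining step is to estimate $\trace\phi^3$ using the sharp Okumura inequality: for any symmetric traceless $m\times m$ matrix $\phi$,
\[
-\tfrac{m-2}{\sqrt{m(m-1)}}|\phi|^3\le \trace\phi^3\le \tfrac{m-2}{\sqrt{m(m-1)}}|\phi|^3.
\]
Choosing the sign of the normal so that $|H|\geq 0$, the non-negativity of $|\nabla\phi|^2$ together with the upper Okumura bound and $|\phi|^2=m(1-|H|^2)$ yields, after squaring and collecting terms using the algebraic identity $4(m-1)+(m-2)^2=m^2$, the inequality $m^2|H|^2\leq (m-2)^2$, i.e.\ $|H|\leq (m-2)/m$. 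Equality forces both $\nabla\phi=0$ and equality in Okumura, so $\phi$ is parallel and has exactly two distinct constant principal curvatures with multiplicities $m-1$ and $1$; the resulting isoparametric hypersurface is, by Cartan's classification, a Clifford torus, and the specific values of the curvatures single out $\mathbb{S}^{m-1}(1/\sqrt 2)\times\mathbb{S}^1(1/\sqrt 2)$. The main obstacle is lining up the correct sign conventions in the Simons identity for the traceless shape operator so that Okumura's inequality bites in the right direction; everything else is bookkeeping.
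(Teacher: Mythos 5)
The paper states this theorem without proof, citing \cite{BMO08, BO12}, so there is no in-paper argument to compare against; your proposal is a self-contained reconstruction, and it is essentially correct. Parts (i) and (ii) are exactly right: with $f=|H|>0$ constant, \eqref{eq: caract_bih_hipersurf_spheres}(i) forces $|A|^2=m$, and the Gauss equation gives $s=m(m-1)+m^2|H|^2-|A|^2=m^2(1+|H|^2)-2m>0$. For (iii), the route through the traceless operator $\phi=A-f\,\Id$, the Nomizu--Smyth/Simons identity, and Okumura's inequality does work, and the algebra $4(m-1)+(m-2)^2=m^2$ delivers $|H|\le(m-2)/m$ as you claim. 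One caveat you should resolve rather than wave at: with the orientation chosen so that $f=|H|>0$, the identity as you displayed it has the wrong sign on the cubic term. Testing it on $\mathbb{S}^{m-1}(1/\sqrt2)\times\mathbb{S}^1(1/\sqrt2)$ (where $\nabla\phi=0$, $|\phi|^2=4(m-1)/m$ and $\trace\phi^3=-8(m-1)(m-2)/m^2$) shows the correct collapsed identity is $|\nabla\phi|^2=-mH\trace\phi^3-2m^2H^2(1-H^2)$; your version gives a strictly negative right-hand side there. This is harmless for the inequality, since either sign leads to $2m^2H^2(1-H^2)\le mH\,|\trace\phi^3|$ and Okumura is two-sided, but it matters for the equality analysis: equality forces equality in the \emph{lower} Okumura bound, which dictates which eigenvalue of $\phi$ is the simple one. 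Finally, in the equality case you do not need Cartan's classification: once $\nabla A=0$ and $A$ has at most two distinct (constant) principal curvatures, Theorem~\ref{th: hypersurf_2curv} already identifies $\varphi(M)$ as an open part of a standard product, and the value $|H|=(m-2)/m$ singles out $\mathbb{S}^{m-1}(1/\sqrt2)\times\mathbb{S}^1(1/\sqrt2)$; the converse direction is a direct computation on that product. With these two adjustments the proof is complete.
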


\begin{remark}
In the minimal case the condition $|A|^2=m$ is exhaustive. In fact a minimal hypersurface in $\mathbb{S}^{m+1}$ with $|A|^2=m$ is a minimal standard product of spheres (see \cite{CdCK70, L69}). We point out that the full classification of CMC hypersurfaces in $\mathbb{S}^{m+1}$  with $|A|^2=m$, therefore biharmonic, is not known.
\end{remark}

\begin{corollary}
Let $\varphi:M^m\to \mathbb{S}^{m+1}$ be a complete  proper biharmonic hypersurface.
\begin{itemize}
\item[(i)] If  $|H|=1$, then  $\varphi(M)=\mathbb{S}^{m}(1/\sqrt 2)$ and $\varphi$ is an embedding.
\item[(ii)] If  $|H|=({m-2})/{m}$, $m>2$, then   $\varphi(M)=\mathbb{S}^{m-1}(1/\sqrt 2)\times\mathbb{S}^1(1/\sqrt 2)$ and the universal cover of $M$ is $\mathbb{S}^{m-1}(1/\sqrt 2)\times\mathbb{R}$.
\end{itemize}
\end{corollary}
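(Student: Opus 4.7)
The plan is, in both cases, to factor $\varphi$ through the target submanifold produced by the preceding theorem, to observe that the resulting factored map is a local isometry, and then to use the completeness of $M$ to upgrade this map to a Riemannian covering onto a complete connected target.

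For part (i), the theorem gives that $\varphi(M)$ is an open subset of $\Sigma:=\mathbb{S}^{m}(1/\sqrt 2)\subset \mathbb{S}^{m+1}$. Since $\Sigma$ is embedded, one writes $\varphi=\iota\circ\psi$ with $\psi:M\to\Sigma$ smooth and $\iota:\Sigma\hookrightarrow\mathbb{S}^{m+1}$ the inclusion. Because $\dim M=\dim\Sigma$ and $\varphi$ is an immersion, $\psi$ is a local diffeomorphism; and it is a local isometry because the metric on $M$ is, by assumption, the one pulled back by $\varphi$ from $\mathbb{S}^{m+1}$, which agrees with the one pulled back from $\Sigma$ by $\psi$. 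The classical fact that a local isometry from a complete connected Riemannian manifold into a connected Riemannian manifold is a Riemannian covering (geodesics lift globally) then shows that $\psi:M\to\Sigma$ is such a covering. Since $\Sigma\cong \mathbb{S}^{m}$ is simply connected (for $m\geq 2$), $\psi$ is a diffeomorphism; hence $\varphi(M)=\Sigma$ and $\varphi$ is an embedding.

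For part (ii) the identical scheme applies to $\Sigma:=\mathbb{S}^{m-1}(1/\sqrt 2)\times\mathbb{S}^1(1/\sqrt 2)$: one obtains a Riemannian covering $\psi:M\to\Sigma$, so $\varphi(M)=\Sigma$. As $m>2$, the factor $\mathbb{S}^{m-1}$ is simply connected while the universal cover of $\mathbb{S}^1$ is $\mathbb{R}$, so the universal cover of $\Sigma$ is $\mathbb{S}^{m-1}(1/\sqrt 2)\times\mathbb{R}$; since $M$ covers $\Sigma$, its universal cover coincides with that of $\Sigma$, giving the stated conclusion.

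The only substantive ingredient is the ``local isometry with complete domain implies Riemannian covering'' principle; everything else is topological bookkeeping and a check that the induced metric on $M$ is compatible with the factorization. The one subtlety to flag is the simple-connectivity hypothesis used in (i), which requires $m\geq 2$; this is standard in the hypersurface setting considered here.
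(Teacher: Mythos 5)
Your argument is correct and is exactly the route the authors intend: the paper states this corollary without proof, and the standard ``local isometry from a complete manifold is a Riemannian covering'' argument, combined with the factorization through the embedded model $\Sigma$ and the simple-connectivity bookkeeping, is the canonical way to upgrade the open-subset conclusion of the preceding theorem to the global statement. Your caveat that the embedding claim in (i) uses $m\ge 2$ is apt and consistent with the paper's implicit conventions for hypersurfaces.
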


As a direct consequence of \cite[Theorem 2]{NS69} we have the following result.

\begin{theorem}
Let $\varphi:M^m\to \mathbb{S}^{m+1}$ be a CMC proper biharmonic hypersurface. Assume that $M$ has non-negative sectional curvature. Then $\varphi(M)$ is either an open part of $\mathbb{S}^{m}(1/\sqrt 2)$, or an open part of $\mathbb{S}^{m_1}(1/\sqrt 2)\times \mathbb{S}^{m_2}(1/\sqrt 2)$, $m_1+m_2=m$, $m_1\neq m_2$.
\end{theorem}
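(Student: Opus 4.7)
The plan is to combine the structural constraint $|A|^2=m$, supplied by the first theorem of Section~3 for every CMC proper biharmonic hypersurface in $\mathbb{S}^{m+1}$, with the classification of CMC hypersurfaces having non-negative sectional curvature established in \cite[Theorem~2]{NS69}. Since $\varphi$ is CMC and proper biharmonic, both $|H|$ and $|A|^2=m$ are constant. Invoking \cite[Theorem~2]{NS69} under the non-negative sectional curvature hypothesis, I would conclude that $\varphi$ is isoparametric with at most two distinct (constant) principal curvatures, and that $\varphi(M)$ is an open piece either of a small hypersphere $\mathbb{S}^m(r)$ (umbilical case) or of a standard product $\mathbb{S}^{m_1}(r_1)\times\mathbb{S}^{m_2}(r_2)$ with $r_1^2+r_2^2=1$ and $m_1+m_2=m$.

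In the umbilical case, $A=\lambda\Id$ gives $|A|^2=m\lambda^2$, so $|A|^2=m$ forces $|\lambda|=|H|=1$; Theorem~\ref{teo:h=cst-b3} then identifies $\varphi(M)$ with an open part of $\mathbb{S}^m(1/\sqrt{2})$. In the product case, the shape operator has the two constant eigenvalues $-r_2/r_1$ (with multiplicity $m_1$) and $r_1/r_2$ (with multiplicity $m_2$). Setting $a=r_1^2$ and $b=r_2^2$ with $a+b=1$, the condition $|A|^2=m$ reads
\[
m_1\frac{b}{a}+m_2\frac{a}{b}=m_1+m_2,
\]
and after clearing denominators it factors as $(b-a)(m_1 b-m_2 a)=0$. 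The second factor $m_1 r_2^2=m_2 r_1^2$ would yield
\[
m|H|=\bigl|{-m_1 r_2/r_1+m_2 r_1/r_2}\bigr|=\frac{|m_2 r_1^2-m_1 r_2^2|}{r_1 r_2}=0,
\]
contradicting the proper biharmonicity of $\varphi$; hence $a=b$, so $r_1=r_2=1/\sqrt{2}$. The remaining relation $m|H|=|m_1-m_2|\neq 0$, again from properness, forces $m_1\neq m_2$, placing $\varphi(M)$ inside an example of class {\bf B2}.

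The step I expect to be the main obstacle is precisely the dichotomy in the product case: the algebraic factorisation $(b-a)(m_1 b-m_2 a)=0$ admits a spurious second branch on which $|A|^2=m$ is compatible with unequal radii, and eliminating it requires genuinely invoking the \emph{proper} (i.e.\ non-minimal) biharmonic hypothesis through the explicit computation of $|H|$. Once this branch is discarded, the remainder is mostly bookkeeping: reading off the shape operators of the small hypersphere and of the Clifford-type products in $\mathbb{S}^{m+1}$ and matching them against the biharmonic data $|A|^2=m$ and $|H|\neq 0$.
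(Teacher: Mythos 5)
Your proposal is correct and takes essentially the same route as the paper, whose entire proof is the remark that the statement is a direct consequence of \cite[Theorem~2]{NS69}. You merely make explicit the routine verification that the paper leaves implicit, namely that $|A|^2=m$ together with $H\neq 0$ rules out the minimal Clifford branch and forces $r_1=r_2=1/\sqrt{2}$ and $m_1\neq m_2$ (alternatively, once one knows there are at most two distinct principal curvatures, Theorem~\ref{th: hypersurf_2curv} gives the same conclusion).
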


In the following we shall no longer assume that the biharmonic hypersurfaces have constant mean curvature, and we shall split our study in three cases. In Case 1 we shall study the proper biharmonic hypersurfaces with respect to the number of their distinct principal curvatures, in Case 2 we shall study them with respect to $|A|^2$ and $|H|^2$, and in Case 3 the study will be done with respect to the sectional and Ricci curvatures of the hypersurface.

\subsection{Case 1}

Obviously, if $\varphi:M^m\to \mathbb{S}^{m+1}$ is an umbilical proper biharmonic hypersurface in $\mathbb{S}^{m+1}$, then $\varphi(M)$ is an open part of $\mathbb{S}^{m}(1/\sqrt 2)$.

When the  hypersurface has at most two or exactly three distinct principal curvatures everywhere we obtain the following rigidity results.

\begin{theorem}[\cite{BMO08}]\label{th: hypersurf_2curv}
Let $\varphi:M^m\to \mathbb{S}^{m+1}$ be a hypersurface. Assume that $\varphi$ is proper biharmonic with at most two distinct principal curvatures everywhere. Then $\varphi$ is CMC and $\varphi(M)$ is either an open part of $\mathbb{S}^{m}(1/\sqrt 2)$, or an open part of $\mathbb{S}^{m_1}(1/\sqrt 2)\times \mathbb{S}^{m_2}(1/\sqrt 2)$, $m_1+m_2=m$, $m_1\neq m_2$. Moreover, if $M$ is complete, then either
$\varphi(M)=\mathbb{S}^{m}(1/\sqrt 2)$ and $\varphi$ is an embedding, or $\varphi(M)=\mathbb{S}^{m_1}(1/\sqrt 2)\times \mathbb{S}^{m_2}(1/\sqrt 2)$, $m_1+m_2=m$, $m_1\neq m_2$ and $\varphi$ is an embedding when $m_1\geq 2$ and $m_2\geq 2$.
\end{theorem}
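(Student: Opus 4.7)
The plan is to reduce the problem to the CMC case by proving that the mean curvature function $f = (\trace A)/m$ is constant, and then to apply the CMC classification of proper biharmonic hypersurfaces stated at the start of Section~3. The two main tools are Corollary~\ref{cor: caract_hypersurf_bih}---which rephrases biharmonicity as (i) $\Delta f = (m-|A|^2)f$ and (ii) $A(\grad f) = -\tfrac{m}{2} f \grad f$---and the classical Codazzi consequence for hypersurfaces with two distinct principal curvatures: an eigenvalue of multiplicity at least two is constant along the leaves of its own eigendistribution, equivalently its gradient lies in the complementary eigendistribution.

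First I would dispose of the umbilical case (single principal curvature everywhere): $\varphi(M)$ then lies on a small hypersphere $\mathbb{S}^m(r)$, and Theorem~\ref{teo:h=cst-b3} together with proper biharmonicity forces $r = 1/\sqrt{2}$. Otherwise, on a dense open subset of $M$ there are exactly two distinct principal curvatures $\lambda \neq \mu$ of locally constant multiplicities $k$ and $m-k$. Suppose by contradiction that the open set $U = \{p \in M : (\grad f)(p) \neq 0\}$ is nonempty. By (ii), on $U$ the vector $\grad f$ is an eigenvector of $A$ with eigenvalue $-mf/2$; putting $\lambda := -mf/2$, the identity $\trace A = mf$ determines the other eigenvalue as $\mu = \frac{m(k+2)}{2(m-k)} f$. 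If $k \geq 2$, the Codazzi consequence cited above forces $\grad \lambda$ to be orthogonal to the $\lambda$-eigendistribution $D_\lambda$; but $\grad \lambda = -\tfrac{m}{2}\grad f$ is a nonzero multiple of $\grad f \in D_\lambda$, contradiction. Hence $k = 1$.

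The hardest step is to rule out the remaining case $k = 1$, where $D_\lambda$ is spanned by $\grad f$ and the complementary eigendistribution $D_\mu$ has dimension $m-1$. For $m \geq 3$, the Codazzi equations applied in an orthonormal frame $\{e_1, e_2, \ldots, e_m\}$ adapted to $D_\lambda \oplus D_\mu$ (with $e_1$ parallel to $\grad f$) yield explicit expressions for the connection coefficients $\langle \nabla_{e_i} e_j, e_k\rangle$ in terms of $\lambda, \mu$, and $e_1(\lambda)$. On the other hand, the explicit formula $|A|^2 = k\lambda^2 + (m-k)\mu^2 = \frac{m^2(m+8)}{4(m-1)} f^2$ substituted into equation (i) gives $\Delta f = m f - \frac{m^2(m+8)}{4(m-1)} f^3$. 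Computing $\Delta f$ directly in the adapted frame (which brings in the connection coefficients just derived) and comparing the two expressions produces an algebraic identity in $f$ that forces $f$ to vanish on $U$, the desired contradiction. The case $m = 2$ (both multiplicities one) is handled by an analogous but shorter ODE argument. I expect this combined Codazzi/Laplacian calculation to be the main technical obstacle, since both biharmonic equations (i) and (ii) must be used simultaneously.

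Once $f$ is shown to be constant, the CMC classification theorem yields $|A|^2 = m$, and combining this with the at-most-two principal curvatures hypothesis (via the work of Otsuki and Chern--do Carmo--Kobayashi) identifies $\varphi(M)$ as an open part of either $\mathbb{S}^m(1/\sqrt{2})$ or $\mathbb{S}^{m_1}(1/\sqrt{2}) \times \mathbb{S}^{m_2}(1/\sqrt{2})$, the condition $m_1 \neq m_2$ being needed to exclude the minimal (hence non-proper biharmonic) case $H = 0$. For the completeness statement, real-analyticity of the biharmonic equation and a standard extension argument promote ``open part of'' to equality with the full model, and the embedding / universal-cover statements then follow from the simple connectedness of $\mathbb{S}^k(1/\sqrt{2})$ for $k \geq 2$ (and the appearance of an $\mathbb{R}$ factor in the universal cover when one factor is $\mathbb{S}^1$).
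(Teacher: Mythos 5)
The paper itself gives no proof of this theorem (it is quoted from \cite{BMO08}), so I am measuring your outline against the argument in that reference. Your overall architecture is the correct one: \eqref{eq: caract_bih_hipersurf_spheres}(ii) makes $\grad f$ a principal direction with eigenvalue $-mf/2$ on $U=\{\grad f\neq 0\}$; Otsuki's lemma (your ``Codazzi consequence'') rules out multiplicity $k\geq 2$ for that eigenvalue; and once $f$ is constant the identification goes through isoparametric hypersurfaces with at most two constant principal curvatures. Your formulas $\mu=\frac{m(k+2)}{2(m-k)}f$ and, for $k=1$, $|A|^2=\frac{m^2(m+8)}{4(m-1)}f^2$ are correct.

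There is, however, a genuine gap at exactly the step you identify as the crux, the case $k=1$. Equating the adapted-frame computation of $\Delta f$ with $(m-|A|^2)f$ does \emph{not} produce ``an algebraic identity in $f$ that forces $f$ to vanish''. What Codazzi gives you is that the integral curves of $e_1=\grad f/|\grad f|$ are geodesics and that the single relevant connection coefficient $\alpha=\langle\nabla_{e_j}e_j,e_1\rangle=e_1(\mu)/(\mu-\lambda)$ is an explicit multiple of $f'/f$ (here $'=e_1$); the frame computation then yields $\Delta f=-f''+c_m\,(f')^2/f$ for an explicit constant $c_m$, and the comparison with \eqref{eq: caract_bih_hipersurf_spheres}(i) is a single second-order ODE $f''=\Phi(f,f')$ along those geodesics. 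Such an ODE has a two-parameter family of nonconstant local solutions, so no contradiction follows from it alone. To conclude one must inject a second, independent relation coming from the integrability conditions --- in \cite{BMO08} this is supplied by the Gauss equation, through the Riccati-type equation satisfied by $\alpha$ along the geodesic integral curves of $e_1$, whose curvature term $1+\lambda\mu$ is again an explicit multiple of $f^2$; only the resulting overdetermined system forces $f'=0$ on $U$. Without naming and using this second equation, the hardest part of your proof is not actually carried out. The remaining steps (umbilical case, reduction to the isoparametric classification and determination of the radii, completeness and embedding statements) are fine in outline.
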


\begin{theorem}[\cite{BMO08}]\label{teo:quasi-umbilicall-conformally-flat}
Let $\varphi:M^m\to \mathbb{S}^{m+1}$, $m\geq 3$, be a  proper biharmonic hypersurface. The following statements are equivalent:
\begin{itemize}
\item[(i)] $\varphi$ is quasi-umbilical,
\item[(ii)] $\varphi$ is conformally flat,
\item[(iii)] $\varphi(M)$ is an open part of $\mathbb{S}^m(1/\sqrt 2)$ or of $\mathbb{S}^{m-1}(1/\sqrt 2)\times \mathbb{S}^{1}(1/\sqrt 2)$.
\end{itemize}
\end{theorem}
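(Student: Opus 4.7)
My plan is to run the cycle $\text{(iii)}\Rightarrow\text{(i)}\Rightarrow\text{(ii)}\Rightarrow\text{(iii)}$, using Theorem~\ref{th: hypersurf_2curv} as the crucial input in the last step.

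The first two implications are essentially computational and draw on classical Riemannian geometry rather than on biharmonicity. For $\text{(iii)}\Rightarrow\text{(i)}$ I would just inspect the two model examples: $\mathbb{S}^m(1/\sqrt2)$ is totally umbilical and therefore trivially quasi-umbilical; for the product $\mathbb{S}^{m-1}(1/\sqrt2)\times\mathbb{S}^1(1/\sqrt2)\subset\mathbb{S}^{m+1}$, a direct computation of the shape operator with respect to the obvious unit normal gives principal curvature $1$ with multiplicity $m-1$ coming from the $\mathbb{S}^{m-1}$ factor and $-1$ with multiplicity $1$ coming from the $\mathbb{S}^{1}$ factor, so one principal curvature has multiplicity $m-1$ and the hypersurface is quasi-umbilical. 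For $\text{(i)}\Rightarrow\text{(ii)}$ I would write the shape operator as $A=\alpha\,\Id+\beta\,\theta\otimes\theta^{\sharp}$ for some $1$-form $\theta$ and plug this into the Gauss equation of the hypersurface in $\mathbb{S}^{m+1}$; the Riemann curvature tensor then has the algebraic form that forces the Weyl tensor to vanish when $m\geq 4$, and the Cotton tensor to vanish when $m=3$ (using in addition the Codazzi equation to handle the derivatives of $\alpha$, $\beta$ and $\theta$). This is the classical Schouten--Cartan observation that quasi-umbilical hypersurfaces in space forms are conformally flat.

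The heart of the argument is $\text{(ii)}\Rightarrow\text{(iii)}$, and the key ingredient is the classical converse of the previous step: for $m\geq 4$, vanishing of the Weyl tensor combined with the Gauss equation forces the shape operator of a hypersurface in a space form to be quasi-umbilical; for $m=3$ the same conclusion follows from vanishing of the Cotton tensor together with Codazzi. Once quasi-umbilicity is in hand, $\varphi$ has at most two distinct principal curvatures everywhere, and I can invoke Theorem~\ref{th: hypersurf_2curv}: $\varphi(M)$ is an open part either of $\mathbb{S}^m(1/\sqrt2)$, or of $\mathbb{S}^{m_1}(1/\sqrt2)\times\mathbb{S}^{m_2}(1/\sqrt2)$ with $m_1\neq m_2$ and $m_1+m_2=m$. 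In the product case the two principal curvatures of the product have multiplicities exactly $m_1$ and $m_2$, so the quasi-umbilicity assumption forces $\min(m_1,m_2)=1$, i.e. the product is $\mathbb{S}^{m-1}(1/\sqrt2)\times\mathbb{S}^1(1/\sqrt2)$, giving (iii).

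The main obstacle is really the classical direction \emph{conformally flat $\Rightarrow$ quasi-umbilical} for a hypersurface, and specifically its borderline case $m=3$, where the Weyl tensor vanishes identically and one must instead exploit the vanishing of the Cotton tensor together with the Codazzi equation. For $m\geq 4$ the Weyl-tensor computation is standard. Everything else is either an immediate consequence of Theorem~\ref{th: hypersurf_2curv} or a short calculation of principal curvatures on the two model spaces, so biharmonicity really enters the argument only through Theorem~\ref{th: hypersurf_2curv}.
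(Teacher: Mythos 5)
Your cycle works for $m\geq 4$, and the implications $\text{(iii)}\Rightarrow\text{(i)}\Rightarrow\text{(ii)}$ are fine for all $m\geq 3$: the two model hypersurfaces have a principal curvature of multiplicity at least $m-1$, and quasi-umbilical hypersurfaces of space forms are conformally flat in every dimension $\geq 3$. The genuine gap is in your treatment of $\text{(ii)}\Rightarrow\text{(iii)}$ when $m=3$. You assert that for a $3$-dimensional hypersurface the vanishing of the Cotton tensor together with the Codazzi equation forces quasi-umbilicity, i.e.\ that the classical converse carries over to the borderline dimension. It does not: the Cartan--Schouten equivalence between conformal flatness and quasi-umbilicity holds only for hypersurfaces of dimension $\geq 4$. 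For $M^3$ in a $4$-dimensional space form there exist conformally flat hypersurfaces with three distinct principal curvatures at every point (this is the classical, and still nontrivial, theory of conformally flat hypersurfaces of $\mathbb{S}^4$, $\mathbb{R}^4$, $\mathbb{H}^4$ going back to Cartan). The paper flags exactly this point in the remark immediately following the theorem: for $m\geq 4$ the equivalence of (i) and (ii) is ``well known'', whereas for $m=3$ it is presented as a consequence of the biharmonicity hypothesis, not of classical conformal geometry. In your proposal biharmonicity enters only through Theorem~\ref{th: hypersurf_2curv}, which you invoke \emph{after} quasi-umbilicity has already been (incorrectly) established, so the $m=3$ case is not actually proved.

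To repair it you would need to combine the biharmonicity equations \eqref{eq: caract_bih_hipersurf_spheres} with the conformal flatness condition for a $3$-manifold and rule out three distinct principal curvatures directly; this is the substantive content of the corresponding result in \cite{BMO08}, which the present survey quotes without reproving. The remainder of your outline --- the principal curvatures of $\mathbb{S}^{m-1}(1/\sqrt2)\times\mathbb{S}^1(1/\sqrt2)$, the use of quasi-umbilicity to force $\min(m_1,m_2)=1$ in the product case, and the appeal to Theorem~\ref{th: hypersurf_2curv} --- is correct, and for $m\geq4$ your argument is complete.
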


It is well known that, if $m\geq 4$, a hypersurface $\varphi:M^m\to \mathbb{S}^{m+1}$ is quasi-umbilical if and only if it is conformally flat. From Theorem~\ref{teo:quasi-umbilicall-conformally-flat}
we see that under the biharmonicity hypothesis the equivalence remains true  when $m=3$.

\begin{theorem}[\cite{BMO10}]\label{th: hypersurf_3curv}
There exist no compact CMC proper biharmonic hypersurfaces $\varphi:M^m\to \mathbb{S}^{m+1}$ with three distinct principal curvatures everywhere.
\end{theorem}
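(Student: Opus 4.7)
The plan is to argue by contradiction. Suppose $\varphi:M^m\to\mathbb{S}^{m+1}$ is a compact CMC proper biharmonic hypersurface with three distinct principal curvatures at every point of $M$.

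Since $\varphi$ is CMC and proper biharmonic, the theorem at the beginning of Section~3 gives $|A|^2=m$, so by the Gauss equation the scalar curvature
$$
s \;=\; m(m-2) + m^2|H|^2
$$
is also constant. Hence $M$ is a closed hypersurface of $\mathbb{S}^{m+1}$ with constant $\trace A$ and constant $\trace A^2$, and with exactly three distinct principal curvatures everywhere. In particular, if $k_1,k_2,k_3$ denote the three distinct principal curvatures with multiplicities $m_1,m_2,m_3$, then $\sum_i m_i k_i$ and $\sum_i m_i k_i^2$ are both constant on $M$.

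The central step, which I expect to be the main obstacle, is to upgrade this to the full \emph{isoparametric} property, i.e.\ to show that each $k_i$ (and each $m_i$) is actually constant on $M$. The natural strategy is to use the Cheng--Yau self-adjoint operator $L u=\sum_{i,j}(mf\,\delta_{ij}-h_{ij})\nabla_i\nabla_j u$ applied to a well-chosen polynomial in the shape operator such as $\trace A^3$. Combined with the Codazzi equations and the constancy of $\trace A$ and $\trace A^2$, this yields a Simons-type pointwise identity whose right-hand side is a sum of non-negative terms involving $|\nabla A|^2$ and a curvature term that is non-negative precisely because only three eigenvalues occur. Integrating over the compact $M$ kills the left-hand side by self-adjointness, forcing $\nabla A\equiv 0$; hence the principal curvatures are constant and $\varphi$ is isoparametric. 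For $m=3$ this conclusion is exactly Chang's theorem on closed hypersurfaces of $\mathbb{S}^4$ with constant $H$ and $s$, and higher-dimensional analogues cover the remaining cases.

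Once isoparametricity is established, the Cartan--M\"unzner theory classifies compact isoparametric hypersurfaces of $\mathbb{S}^{m+1}$ with exactly three distinct principal curvatures: the three multiplicities must coincide, $m_1=m_2=m_3=k$ with $k\in\{1,2,4,8\}$, so $m=3k$, and the principal curvatures take the form $\cot(\theta+j\pi/3)$, $j=0,1,2$, for some $\theta\in(0,\pi/3)$. Using the trigonometric identity
$$
\sum_{j=0}^{2}\cot^2(\theta+j\pi/3) \;=\; 9\csc^2(3\theta)-3,
$$
we obtain $|A|^2=k\bigl[9\csc^2(3\theta)-3\bigr]$. Imposing the biharmonic constraint $|A|^2=m=3k$ reduces to $\sin^2(3\theta)=3/2$, which is impossible. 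This contradiction shows that no such hypersurface can exist, completing the proof.
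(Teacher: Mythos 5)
Your endgame is correct: once isoparametricity is in hand, the Cartan--M\"unzner data for $g=3$ (equal multiplicities $k\in\{1,2,4,8\}$, curvatures $\cot(\theta+j\pi/3)$) together with $|A|^2=\sum_j k\cot^2(\theta+j\pi/3)=k\bigl(9\csc^2(3\theta)-3\bigr)=m=3k$ does force $\sin^2(3\theta)=3/2$, a contradiction --- this is just a hands-on version of Theorem~\ref{teo:isoparametric}, which already says proper biharmonic isoparametric hypersurfaces have $\ell\le 2$. The problem is that the entire proof hinges on the step you yourself flag as the main obstacle, and as written that step is a genuine gap, not an argument. Constancy of $\trace A$ and of $\trace A^2=|A|^2=m$ gives only \emph{two} scalar constraints on the \emph{three} unknown functions $k_1,k_2,k_3$, so some independent third constant invariant of $A$ is indispensable before the $k_i$ can be pinned down. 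You never produce it: the Cheng--Yau/Simons-type identity for $\trace A^3$ is not written down, and the claim that the resulting curvature term is ``non-negative precisely because only three eigenvalues occur'' has no justification --- in the relevant formulas the terms $(\lambda_i-\lambda_j)^2R_{ijij}=(\lambda_i-\lambda_j)^2(1+\lambda_i\lambda_j)$ and the cubic expressions in $A$ have no definite sign without extra curvature hypotheses (compare the proof of Theorem~\ref{th: jchen2}, which needs non-negative sectional curvature \emph{and} $m\le 10$ to control exactly these terms). Worse, the fallback assertion that ``higher-dimensional analogues [of Chang's theorem] cover the remaining cases'' is false: Chang's theorem \cite{CH93} is specific to closed hypersurfaces of $\mathbb{S}^4$, and whether constant mean curvature plus constant scalar curvature forces isoparametricity in $\mathbb{S}^{m+1}$ for $m\ge 4$ --- even assuming exactly three distinct principal curvatures --- is an open problem in the circle of the Chern conjecture. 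So for every $m\neq 3$ your proof has no content at its central step.

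The route actually used by the authors for $\ell=3$ situations is visible in the corollary on Dupin hypersurfaces in this paper: a compact CMC proper biharmonic hypersurface of $\mathbb{S}^{m+1}$ is mass-symmetric and of $1$-type or $2$-type in the sense of B.-Y.~Chen, with known eigenvalues $m(1\pm|H|)$ (see \cite{BMO12}); the $1$-type case gives $\ell=1$ immediately, and in the $2$-type case the finite-type equations supply the additional constant symmetric functions of the shape operator needed to conclude, via Chen's theory \cite{C96} and Theorem~\ref{teo:isoparametric}, that no $\ell=3$ example survives. That is the substance of the proof in \cite{BMO10}, and it is precisely the piece your proposal replaces with a hope. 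To repair your write-up, you must either carry out an honest derivation of a third constant invariant of $A$ (e.g.\ from the $2$-type condition) --- after which the distinctness of $k_1,k_2,k_3$ and a Vandermonde-type argument do give isoparametricity and your final computation applies --- or restrict the claim to $m=3$, where Chang's theorem genuinely covers the reduction.
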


In particular, in the low dimensional cases, Theorem~\ref{th: hypersurf_2curv}, Theorem~\ref{th: hypersurf_3curv} and a result of S.~Chang (see \cite{CH93}) imply the following.

\begin{theorem}[\cite{CMO01,BMO10}]
Let $\varphi:M^m\to \mathbb{S}^{m+1}$ be a proper biharmonic hypersurface.
\begin{itemize}
\item[(i)] If $m=2$, then $\varphi(M)$ is an open part of $\mathbb{S}^{2}(1/\sqrt 2)\subset\mathbb{S}^3$.
\item[(ii)] If $m=3$ and $M$ is compact, then $\varphi$ is CMC and $\varphi(M)=\mathbb{S}^{3}(1/\sqrt 2)$ or $\varphi(M)=\mathbb{S}^{2}(1/\sqrt 2)\times\mathbb{S}^{1}(1/\sqrt 2)$.
\end{itemize}
\end{theorem}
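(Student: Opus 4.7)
For part (i), the key observation is that when $m=2$ the shape operator is a symmetric $2\times 2$ matrix, so at every point $\varphi$ has at most two distinct principal curvatures. Thus Theorem~\ref{th: hypersurf_2curv} applies directly. Among the two possible conclusions, the product $\mathbb{S}^{m_1}(1/\sqrt 2)\times\mathbb{S}^{m_2}(1/\sqrt 2)$ would require positive integers $m_1,m_2$ with $m_1+m_2=2$ and $m_1\neq m_2$, which has no solution. Therefore $\varphi(M)$ must be an open part of $\mathbb{S}^2(1/\sqrt 2)\subset\mathbb{S}^3$.

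For part (ii), my plan is to reduce to the theorems already established for CMC proper biharmonic hypersurfaces. The first step is to invoke S.~Chang's result from \cite{CH93}, which asserts that any compact proper biharmonic hypersurface $M^3\to\mathbb{S}^4$ has constant mean curvature. Once $\varphi$ is CMC, the earlier CMC theorem of this section yields $|A|^2=3$ and $|H|=\mathrm{const}$.

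Next, since $m=3$, the shape operator has at most three distinct eigenvalues at each point, splitting the analysis in two cases. If $\varphi$ has at most two distinct principal curvatures everywhere, Theorem~\ref{th: hypersurf_2curv} combined with the completeness coming from compactness gives $\varphi(M)=\mathbb{S}^3(1/\sqrt 2)$ or $\varphi(M)=\mathbb{S}^{m_1}(1/\sqrt 2)\times\mathbb{S}^{m_2}(1/\sqrt 2)$ with $m_1+m_2=3$, $m_1\neq m_2$, hence (up to relabeling) $\mathbb{S}^2(1/\sqrt 2)\times\mathbb{S}^1(1/\sqrt 2)$. If instead $\varphi$ has three distinct principal curvatures everywhere, Theorem~\ref{th: hypersurf_3curv}, applicable because we have just established the CMC property, gives an immediate contradiction.

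The main obstacle is the intermediate situation in which three distinct principal curvatures appear on a proper open subset $U\subsetneq M$ while at most two occur on the closed complement. One would need to rule this out; the natural strategy is to exploit the constancy of $|H|$ and of $|A|^2=3$, which constrains the elementary symmetric polynomials of the principal curvatures, and then to run the local computation behind Theorem~\ref{th: hypersurf_3curv} on $U$ and extract a contradiction that does not use compactness of the full $M$ (only closedness of $U$ within $M$), or alternatively to perform a matching argument along $\partial U$ where two of the curvatures must coincide, using the Codazzi equation to propagate the multiplicities. Provided this dichotomy reduces to the two clean cases above, the stated classification follows.
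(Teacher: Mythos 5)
Your argument for part (i) is correct and is the intended one: for $m=2$ there are trivially at most two distinct principal curvatures, so Theorem~\ref{th: hypersurf_2curv} applies, and the product alternative is vacuous since $m_1+m_2=2$ with $m_1\neq m_2$ has no solution in positive integers.

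Part (ii), however, has a genuine gap, and it sits exactly at the step you dispose of first. Chang's theorem in \cite{CH93} is not ``every compact proper biharmonic hypersurface of $\mathbb{S}^4$ is CMC''; it is the statement in that paper's title: a closed hypersurface of $\mathbb{S}^4$ with constant scalar curvature \emph{and} constant mean curvature is isoparametric. It says nothing about biharmonicity and cannot be used to produce the CMC property; that property is the substantive content of \cite{BMO10} and needs its own argument, so your part (ii) starts from an unjustified premise. Moreover, once CMC is actually in hand, the correct use of Chang's theorem is precisely what closes the ``intermediate situation'' you identify as the main obstacle: a CMC proper biharmonic hypersurface has $|A|^2=3$ and $s=9(1+|H|^2)-6$ constant, so by Chang it is isoparametric; hence the number $\ell$ of distinct principal curvatures is constant on $M$, and the only cases are $\ell\leq 2$ (Theorem~\ref{th: hypersurf_2curv}, plus compactness for the global statement) and $\ell=3$ everywhere (excluded by Theorem~\ref{th: hypersurf_3curv}). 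Your proposed substitutes --- rerunning the computation behind Theorem~\ref{th: hypersurf_3curv} on a proper open set, or a matching argument along $\partial U$ via Codazzi --- are speculative and not carried out, and they are unnecessary once Chang's result is invoked in its actual form. In short, the proposal misattributes the hardest step (CMC) and leaves open the very case that the correctly stated external theorem is there to eliminate.
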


We recall that an orientable hypersurface $\varphi:M^m\to\mathbb{S}^{m+1}$ is
said to be {\it isoparametric} if it has constant principal curvatures  or, equivalently, the number $\ell$ of distinct principal curvatures $k_1 > k_2>\cdots
> k_\ell$ is constant on $M$  and the $k_i$'s are constant. The distinct principal curvatures have constant multiplicities $m_1, \ldots,m_\ell$, $m = m_1 + m_2 + \ldots + m_\ell$.

In \cite{IIU08}, T.~Ichiyama, J.I.~Inoguchi and H.~Urakawa classified the proper biharmonic isoparametric hypersurfaces in spheres.

\begin{theorem}[\cite{IIU08}]\label{teo:isoparametric}
Let $\varphi:M^m\to \mathbb{S}^{m+1}$ be an orientable isoparametric hypersurface. If $\varphi$ is proper biharmonic, then $\varphi(M)$ is either an open part of $\mathbb{S}^m(1/\sqrt2)$, or an open part of
$\mathbb{S}^{m_1}(1/\sqrt2)\times\mathbb{S}^{m_2}(1/\sqrt2)$, $m_1+m_2=m$,
$m_1\neq m_2$.
\end{theorem}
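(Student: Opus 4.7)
The plan is to use the isoparametricity hypothesis to reduce the biharmonic equations to a finite algebraic system, and then invoke M\"unzner's classification of isoparametric hypersurfaces in spheres. Since the principal curvatures $k_1,\dots,k_\ell$ of $\varphi$ and their multiplicities $m_1,\dots,m_\ell$ are constant, the mean curvature function $f=(1/m)\sum_i m_i k_i$ is constant and $\varphi$ is CMC. Condition (ii) of Corollary~\ref{cor: caract_hypersurf_bih} is then automatic (as $\grad f=0$), while (i) collapses to $(m-|A|^2)f=0$. Proper biharmonicity rules out $f\equiv 0$, and therefore
\[
|A|^2=\sum_{i=1}^{\ell}m_i k_i^2=m.
\]

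By M\"unzner's theorem $\ell\in\{1,2,3,4,6\}$, the principal curvatures admit the parametrization $k_i=\cot\bigl(\theta+(i-1)\pi/\ell\bigr)$ for some $\theta$, and the multiplicities obey $m_i=m_{i+2}$ (indices mod $\ell$). For $\ell=1$ the hypersurface is umbilical, $|A|^2=m$ forces $k^2=1$, and $\varphi(M)$ is an open part of $\mathbb{S}^m(1/\sqrt 2)$. For $\ell=2$ one simply invokes Theorem~\ref{th: hypersurf_2curv}, obtaining an open part of $\mathbb{S}^{m_1}(1/\sqrt 2)\times\mathbb{S}^{m_2}(1/\sqrt 2)$ with $m_1\neq m_2$; alternatively, one observes that the cotangent formula yields $k_1 k_2=-1$, solves $|A|^2=m$ for the two radii, and discards the minimal Clifford torus solution on account of $f\neq 0$.

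The substance of the argument is therefore the elimination of $\ell\in\{3,4,6\}$, and the main computational tool is the identity
\[
\sum_{i=0}^{\ell-1}\cot^2\!\Bigl(\theta+\tfrac{i\pi}{\ell}\Bigr)=\ell(\ell-1)+\ell^2\cot^2(\ell\theta),
\]
obtained by differentiating the classical $\sum_{i=0}^{\ell-1}\cot(\theta+i\pi/\ell)=\ell\cot(\ell\theta)$. For $\ell=3$ all multiplicities equal some $p$, and the condition $|A|^2=m=3p$ reduces to $9\cot^2(3\theta)=-3$, which has no real solution. For $\ell=4$ and $\ell=6$ the multiplicities split into two groups, say $a$ and $b$; regrouping the squared-cotangent sum into the two subsums whose arguments have step $\pi/2$ (resp.\ $\pi/3$) and applying the same identity to each yields $a\cot^2(2\theta)+b\tan^2(2\theta)=0$ (resp.\ the analogue in $3\theta$ after a constant shift), which, with $a,b>0$, is impossible. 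The main obstacle is not conceptual but combinatorial, namely keeping track of the multiplicity pairing $m_i=m_{i+2}$ when grouping terms; once this is done the claimed trichotomy follows.
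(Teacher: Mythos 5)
The paper states Theorem~\ref{teo:isoparametric} as a cited result from \cite{IIU08} and does not reproduce a proof, so there is nothing internal to compare against; your argument is, however, correct and follows the same standard route as the cited source: isoparametricity gives CMC, properness plus equation (i) of Corollary~\ref{cor: caract_hypersurf_bih} gives $|A|^2=m$, and M\"unzner's restriction $\ell\in\{1,2,3,4,6\}$ together with the parametrization $k_i=\cot(\theta+(i-1)\pi/\ell)$ and the pairing $m_i=m_{i+2}$ reduces everything to the cotangent identity you state. Your case analysis checks out (for $\ell=3$ one gets $9\cot^2(3\theta)=-3$; for $\ell=4$ one needs $\cot(2\theta)=\tan(2\theta)=0$ simultaneously; for $\ell=6$ the constant term already gives $3(a+b)+9a\cot^2(3\theta)+9b\tan^2(3\theta)=0$), and the $\ell=2$ case is correctly settled either by Theorem~\ref{th: hypersurf_2curv} or by the direct computation discarding the minimal Clifford torus.
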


An orientable hypersurface $\varphi:M^m\to\mathbb{S}^{m+1}$ is
said to be a {\it proper Dupin hypersurface} if the number $\ell$ of  distinct principal curvatures is constant on $M$ and each principal curvature function is constant along its corresponding principal directions.

\begin{theorem}\label{th: Dupin_bih_CMC}
Let $\varphi:M^m\to \mathbb{S}^{m+1}$ be an orientable proper Dupin hypersurface. If $\varphi$ is proper biharmonic, then $\varphi$ is CMC.
\end{theorem}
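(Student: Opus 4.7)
The plan is to derive a contradiction from the biharmonicity equations of Corollary~\ref{cor: caract_hypersurf_bih} under the assumption that the mean curvature function $f = (\trace A)/m$ is non-constant. By connectedness of $M$, if $f$ is not constant, then the open set $U = \{x \in M : (\grad f)_x \neq 0\}$ is non-empty. On $U$, the tangential equation (ii), $A(\grad f) = -(m/2) f \grad f$, says that $\grad f$ is an eigenvector of the shape operator with eigenvalue $-(m/2) f$.

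Next I would exploit the proper Dupin hypothesis: the number $\ell$ of distinct principal curvatures is constant, so there exist smooth functions $k_1 > k_2 > \cdots > k_\ell$ of constant multiplicities and smooth principal eigendistributions $E_1,\ldots,E_\ell$. At each $x \in U$ the value $-(m/2) f(x)$ equals exactly one $k_{j(x)}(x)$. Since the $k_i$'s are continuous and pairwise distinct, the index $j(x)$ is locally constant on $U$ (in a small neighborhood of $x_0$ the function $-(m/2)f$ remains strictly closer to $k_{j(x_0)}$ than to any other $k_i$, so it must coincide with $k_{j(x_0)}$). Thus on a suitable non-empty open subset $V\subset U$ one has $-(m/2) f \equiv k_j$ for a fixed $j$, and $\grad f$ is a nowhere-vanishing section of $E_j$ on $V$.

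The contradiction is then immediate from the Dupin condition. Since each $k_i$ is constant along $E_i$, we have $X(k_j) = 0$ for every $X \in E_j$. Applying this to $X = \grad f \in E_j$ on $V$ and using $k_j = -(m/2)f$,
$$
0 \;=\; (\grad f)(k_j) \;=\; -\tfrac{m}{2}\,(\grad f)(f) \;=\; -\tfrac{m}{2}\,|\grad f|^2,
$$
which contradicts $\grad f \neq 0$ on $V$. Hence $\grad f \equiv 0$ on $M$, $f$ is constant, and $\varphi$ is CMC.

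The only delicate point is the stabilization of the index $j(x)$ on an open set, and this is precisely where the proper Dupin hypothesis is used: it guarantees smoothness of the $k_i$'s and that their multiplicities are constant, so that the eigendistribution $E_j$ containing $\grad f$ is a well-defined smooth object on $V$. Notably, the normal biharmonic equation (i) of Corollary~\ref{cor: caract_hypersurf_bih} plays no role; the proof uses only the tangential equation (ii) together with the Dupin property.
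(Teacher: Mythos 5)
Your proof is correct and follows essentially the same route as the paper's: assume $f$ non-constant, use the tangential equation \eqref{eq: caract_bih_hipersurf_spheres}(ii) to identify $-\tfrac{m}{2}f$ as a principal curvature with principal direction $\grad f$ on the open set where $\grad f\neq 0$, and then invoke the Dupin condition to get $\grad f(f)=|\grad f|^2=0$, a contradiction. Your extra care in showing that the index $j(x)$ stabilizes locally (so that $-\tfrac{m}{2}f$ coincides with a fixed smooth principal curvature function $k_j$ on an open set) is a worthwhile elaboration of a step the paper leaves implicit, but it is the same argument.
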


\begin{proof}
As $M$ is orientable, we fix $\eta\in C(NM)$ and denote $A=A_\eta$ and $f=(\trace A)/m$. Suppose that $f$ is not constant. Then there exists an open subset $U\subset M$ such that $\grad f\neq 0$ at every point of $U$. Since $\varphi$ is proper biharmonic, from \eqref{eq: caract_bih_hipersurf_spheres} we get that $-{mf}/{2}$ is a principal curvature with principal direction $\grad f$. Since the hypersurface is proper Dupin, by definition, $\grad f(f)=0$, i.e. $\grad f=0$ on $U$, and we come to a contradiction.
\end{proof}

\begin{corollary}
Let $\varphi:M^m\to \mathbb{S}^{m+1}$ be an orientable proper Dupin hypersurface with $\ell\leq 3$. If $\varphi$ is proper biharmonic, then $\varphi(M)$ is either an open part of $\mathbb{S}^m(1/\sqrt2)$, or an open part of
$\mathbb{S}^{m_1}(1/\sqrt2)\times\mathbb{S}^{m_2}(1/\sqrt2)$, $m_1+m_2=m$,
$m_1\neq m_2$.
\end{corollary}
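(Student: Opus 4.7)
The plan is to reduce to cases already treated in the paper by combining Theorem~\ref{th: Dupin_bih_CMC} with Theorem~\ref{th: hypersurf_2curv} and Theorem~\ref{teo:isoparametric}. First, by Theorem~\ref{th: Dupin_bih_CMC}, $\varphi$ is CMC, so $f$ is constant; since $\varphi$ is proper biharmonic, equation~(i) of Corollary~\ref{cor: caract_hypersurf_bih} then forces $|A|^2=m$, which is also constant.

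For $\ell\leq 2$, Theorem~\ref{th: hypersurf_2curv} directly yields the desired conclusion, so only $\ell=3$ needs to be ruled out. The natural strategy is to upgrade the Dupin hypothesis to isoparametricity and then invoke Theorem~\ref{teo:isoparametric}, which admits only $\ell\leq 2$. Let $k_1,k_2,k_3$ be the three distinct principal curvatures with constant multiplicities $m_1,m_2,m_3$. Differentiating the two identities
\[
\sum_{j=1}^{3}m_j k_j=mf,\qquad \sum_{j=1}^{3}m_j k_j^{2}=m,
\]
along an arbitrary vector $X_i$ tangent to the principal distribution $E_i$ of $k_i$, and using the proper Dupin condition $X_i(k_i)=0$, produces a homogeneous $2\times 2$ linear system in $X_i(k_j)$ and $X_i(k_k)$, where $\{j,k\}=\{1,2,3\}\setminus\{i\}$. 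Its determinant is $m_j m_k(k_k-k_j)$, which is non-zero because $k_j\neq k_k$. Hence $X_i(k_j)=X_i(k_k)=0$; letting $i$ range over $\{1,2,3\}$ shows that each $k_j$ is constant along every principal direction, i.e.\ $\varphi$ is isoparametric, and Theorem~\ref{teo:isoparametric} then excludes $\ell=3$.

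The main obstacle is the $\ell=3$ step: the Dupin hypothesis alone only provides the constancy of $k_j$ along its own principal directions, and extra information is needed to rule out variation in the other directions. That information is precisely the constancy of the two invariants $\trace A=mf$ and $|A|^{2}=m$, which is exactly the combined output of Theorem~\ref{th: Dupin_bih_CMC} and the first biharmonic equation in Corollary~\ref{cor: caract_hypersurf_bih}.
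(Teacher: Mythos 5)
Your argument is correct, and for the crucial $\ell=3$ step it takes a genuinely different, more self-contained route than the paper. The paper also begins with Theorem~\ref{th: Dupin_bih_CMC} to get CMC and reduces to excluding $\ell=3$, but it then invokes finite-type theory: by a result of \cite{BMO12} a CMC proper biharmonic hypersurface is of Chen type $1$ or $2$; type $1$ forces $\ell=1$, and for type $2$ a theorem of Chen (Theorem~9.11 in \cite{C96}) says a proper Dupin hypersurface of $2$-type with $\ell=3$ is isoparametric, after which Theorem~\ref{teo:isoparametric} gives the contradiction. You reach isoparametricity instead by elementary linear algebra: CMC plus equation (i) of Corollary~\ref{cor: caract_hypersurf_bih} give that both $\trace A=mf$ and $|A|^2=\sum_j m_jk_j^2=m$ are constant, and differentiating these two invariants along a principal direction $X_i$ (where the Dupin condition kills $X_i(k_i)$) yields a $2\times2$ homogeneous system in the remaining two derivatives whose determinant $m_jm_k(k_k-k_j)$ is nonzero. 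This cleanly exploits that $\ell=3$ gives exactly two unknowns for the two conserved quantities, and it bypasses the external finite-type machinery entirely; the only standard facts you use implicitly --- that constancy of $\ell$ forces the multiplicities to be locally constant and the $k_j$ and their distributions to be smooth --- are routine. What the paper's route buys is a statement that plugs into Chen's general Dupin/finite-type theory; what yours buys is a short, verifiable proof depending only on results already established in the paper. Both then conclude via Theorem~\ref{teo:isoparametric} and, for $\ell\leq 2$, Theorem~\ref{th: hypersurf_2curv}.
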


\begin{proof}
Taking into account Theorem~\ref{th: hypersurf_2curv}, we only have to prove that there exist no proper biharmonic proper Dupin hypersurfaces with $\ell=3$. Indeed, by Theorem~\ref{th: Dupin_bih_CMC}, we conclude that $\varphi$ is CMC. By a result in \cite{BMO12}, $\varphi$  is of type $1$ or of type $2$, in the sense of B.-Y.~Chen. If $\varphi$ is of type $1$, we must have $\ell=1$ and we get a contradiction. If $\varphi$ is of type $2$, since $\varphi$ is proper Dupin with $\ell=3$, from Theorem~9.11 in \cite{C96}, we get that $\varphi$ is isoparametric. But, from
Theorem~\ref{teo:isoparametric}, proper biharmonic isoparametric hypersurfaces must have $\ell\leq 2$.

\end{proof}

\subsection{Case 2}
The simplest result is the following.
\begin{proposition}\label{pro: a-compact}
Let $\varphi:M^m\to \mathbb{S}^{m+1}$ be a compact hypersurface. Assume that $\varphi$ is proper biharmonic with nowhere zero mean curvature vector field and $|A|^2\leq m$, or $|A|^2\geq m$. Then $\varphi$ is CMC and $|A|^2=m$.
\end{proposition}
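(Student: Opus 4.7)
The plan is to apply the biharmonic hypersurface equation from Corollary~\ref{cor: caract_hypersurf_bih}(i), namely $\Delta f = (m - |A|^2) f$, combined with compactness via the fact that $\int_M \Delta f \, v_g = 0$. The assumption $|H| \neq 0$ together with connectedness of $M$ will ensure that $f$ has constant sign (once we fix $\eta$ so that $f = |H|$ locally, or by passing to the orientation cover if necessary), so we may assume $f > 0$ everywhere.

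With $f > 0$, the hypothesis $|A|^2 \leq m$ makes the right-hand side $(m - |A|^2) f$ pointwise non-negative, hence $\Delta f \geq 0$. Since $M$ is compact, $\int_M \Delta f \, v_g = 0$, and a non-negative function with zero integral vanishes identically. Thus $\Delta f \equiv 0$. Substituting back into Corollary~\ref{cor: caract_hypersurf_bih}(i) gives $(m - |A|^2) f = 0$, and since $f > 0$, we conclude $|A|^2 = m$ everywhere. The symmetric argument handles the case $|A|^2 \geq m$: then $\Delta f \leq 0$, and the same compactness reasoning forces $\Delta f \equiv 0$, whence again $|A|^2 = m$.

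To finish, I would observe that on a compact manifold, $\Delta f = 0$ combined with the convention $\Delta = -\trace \nabla^2$ gives
\[
0 = \int_M f \, \Delta f \, v_g = \int_M |\grad f|^2 \, v_g,
\]
so $\grad f \equiv 0$ and $f$ is constant. Therefore $\varphi$ is CMC with $|A|^2 = m$, as claimed.

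I do not anticipate any serious obstacle here; the proof is essentially the standard trick of combining a Laplace-type equation with a sign condition on a compact manifold. The one small point to watch is the sign convention for $\Delta$ (this paper uses $\Delta = -\trace \nabla^2$, so $\Delta f \geq 0$ truly follows from $(m-|A|^2)f \geq 0$), and the fact that orientability is not explicitly assumed in the proposition — but this is handled by choosing $\eta = H/|H|$ locally, which makes $f = |H| > 0$ well defined and makes the scalar equation (i) globally meaningful for the purpose of integrating.
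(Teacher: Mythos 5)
Your proof is correct and follows essentially the same route as the paper: take $\eta=H/|H|$ as a global unit normal (so $f=|H|>0$), note that $(m-|A|^2)f$ has constant sign, and conclude on the compact $M$ that $f$ is constant and $|A|^2=m$. The only cosmetic difference is that the paper simply invokes ``the maximum principle'' where you spell out the equivalent integration argument $\int_M\Delta f\,v_g=0$ and $\int_M f\Delta f\,v_g=\int_M|\grad f|^2\,v_g$.
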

\begin{proof}
As $H$ is nowhere zero, we can consider $\eta=H/|H|$ a global unit section in the normal bundle $NM$ of $M$ in $\mathbb{S}^{m+1}$.
Then, on $M$,
$$
\Delta f=(m-|A|^2)f,
$$
where $f=(\trace A)/m=|H|$. Now, as $m-|A|^2$ does not change sign, from the maximum principle we get $f=$ constant and $|A|^2=m$.
\end{proof}

In fact, Proposition~\ref{pro: a-compact} holds without the hypothesis ``$H$ nowhere zero''. In order to prove this we shall consider the cases $|A|^2\geq m$ and $|A|^2\leq m$, separately.

\begin{proposition}\label{prop: |B|>m}
Let $\varphi:M^m\to \mathbb{S}^{m+1}$ be a compact hypersurface. Assume that $\varphi$ is proper biharmonic and $|A|^2\geq m$. Then $\varphi$ is CMC and $|A|^2=m$.
\end{proposition}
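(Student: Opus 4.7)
The strategy is to replace the function $f=(\trace A)/m$ used in the proof of Proposition~\ref{pro: a-compact} by the globally smooth function $|H|^2$, so that compactness of $M$ and the hypothesis $|A|^2\ge m$ can be combined without assuming that $H$ is nowhere zero (which also sidestepped the need for a global unit section in $NM$). The plan is to derive a Weitzenb\"ock-type identity for $\Delta(|H|^2)$ whose right-hand side is, under the given hypothesis, a sum of two non-positive terms, and then integrate over $M$.

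First I would pair the normal component of the bitension field from \eqref{eq: caract_bih_spheres}, namely
\begin{equation*}
\Delta^\perp H + \trace B(\cdot, A_H\cdot) - m\,H = 0,
\end{equation*}
with $H$. Since $\langle B(X,Y),H\rangle = \langle A_H X, Y\rangle$, the middle term contributes $|A_H|^2$, and in codimension one $|A_H|^2 = |H|^2|A|^2$ (immediate where $H\neq 0$ from $A_H = |H|A$ for a local $\eta = H/|H|$, and extended by continuity to the zero set of $H$, where both sides vanish). This yields
\begin{equation*}
\langle \Delta^\perp H, H\rangle = (m-|A|^2)\,|H|^2.
\end{equation*}
Combined with the standard identity
\begin{equation*}
\Delta(|H|^2) = 2\langle \Delta^\perp H, H\rangle - 2|\nabla^\perp H|^2,
\end{equation*}
valid under the sign convention $\Delta = -\trace \nabla^2$ of the paper, this gives
\begin{equation*}
\Delta(|H|^2) = 2(m-|A|^2)\,|H|^2 - 2|\nabla^\perp H|^2.
\end{equation*}

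Under the hypothesis $|A|^2\geq m$, the right-hand side is non-positive. Since $M$ is compact, the divergence theorem gives $\int_M \Delta(|H|^2)\,v_g = 0$, forcing each non-positive summand to vanish identically: $\nabla^\perp H \equiv 0$ and $(m-|A|^2)|H|^2\equiv 0$. From the first, $|H|$ is constant, so $\varphi$ is CMC; since $\varphi$ is proper biharmonic, $|H|\neq 0$, and the second equation then forces $|A|^2 \equiv m$. The step I expect to require most care is the invariant derivation of the identity for $\Delta(|H|^2)$ and the equality $|A_H|^2 = |H|^2|A|^2$ at points where $H$ vanishes (or if $M$ were not orientable); both are resolved by noting that all quantities appearing ($|H|^2$, $|A|^2$, $|A_H|^2$, $|\nabla^\perp H|^2$) are independent of any local choice of unit normal, so the identity extends from the open set $\{H\neq 0\}$ to all of $M$ by continuity.
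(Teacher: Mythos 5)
Your proof is correct and is essentially the paper's own argument: the identity $\tfrac{1}{2}\Delta|H|^2=(m-|A|^2)|H|^2-|\nabla^\perp H|^2$ you derive from the normal bitension equation is exactly the paper's $\tfrac{1}{2}\Delta f^2=(m-|A|^2)f^2-|\grad f|^2$ (since in codimension one $\nabla^\perp_X H=X(f)\eta$), and both conclude by integrating the resulting non-positive Laplacian over the compact $M$. The only difference is presentational: you work invariantly with $H$ throughout, whereas the paper computes locally with $f=(\trace A)/m$ and then observes that every term in the identity is globally defined.
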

\begin{proof}
Locally,
$$
\Delta f=(m-|A|^2)f,
$$
where $f=(\trace A)/m$, $f^2=|H|^2$,
and therefore
$$
\frac{1}{2}\Delta f^2=(m-|A|^2)f^2-|\grad f|^2\leq 0.
$$
As $f^2$, $|A|^2$ and  $|\grad f|^2$ are  well defined on the whole $M$, the formula holds on $M$. From the maximum principle we get that $|H|$ is constant and $|A|^2=m$.
\end{proof}

The case $|A|^2\leq m$ was solved by J.H.~Chen in \cite{C93}. Here we include the proof for two reasons. First, the original one is in Chinese and second, the formalism used by J.H.~Chen was local, while ours is globally invariant. Moreover, the proof we present is slightly shorter.

\begin{theorem}[\cite{C93}]\label{th: jchen1}
Let $\varphi:M^m\to \mathbb{S}^{m+1}$ be a compact hypersurface in $\mathbb{S}^{m+1}$. If $\varphi$ is proper biharmonic and $|A|^2\leq m$, then $\varphi$ is CMC and $|A|^2=m$.
\end{theorem}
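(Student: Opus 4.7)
The plan is to pair the biharmonic equations \eqref{eq: caract_bih_hipersurf_spheres} with the Simons identity for $|A|^2$ on a hypersurface in $\mathbb{S}^{m+1}$, integrate over the compact manifold $M$, and exploit the hypothesis $|A|^2\le m$ to force a pointwise equality. All the tensorial quantities I will use ($f^2$, $|\grad f|^2$, $|A|^2$, $|\nabla A|^2$, $\langle A,\nabla\grad f\rangle$, $f\trace(A^3)$) are unaffected by the sign ambiguity in the choice of local unit normal $\eta$, so the argument is intrinsic and, after passing to the orientation double cover if necessary, $f$ is a well-defined smooth function on $M$. Multiplying \eqref{eq: caract_bih_hipersurf_spheres}(i) by $f$ and integrating gives the basic identity
$$\int_M |\grad f|^2\, v_g \;=\; \int_M (m-|A|^2)\, f^2\, v_g,$$
and the Codazzi relation $\Div A = m\grad f$ combined with integration by parts yields $\int_M \langle A,\nabla\grad f\rangle\, v_g = -m\int_M|\grad f|^2\, v_g$.

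Next I would apply Simons' identity for the shape operator of a hypersurface in $\mathbb{S}^{m+1}$. With our sign convention for $\Delta$ it takes the form
$$\tfrac{1}{2}\Delta|A|^2 \;=\; -|\nabla A|^2 - m\langle A,\nabla\grad f\rangle - mf\trace(A^3) + |A|^4 - m|A|^2.$$
Integrating and substituting the two identities from the previous step collapses it to
$$\int_M |\nabla A|^2 \;+\; \int_M |A|^2(m-|A|^2) \;+\; m\int_M f\,\trace(A^3) \;=\; m^2\int_M (m-|A|^2)f^2. \qquad (\star)$$
Under $|A|^2\le m$, every term in $(\star)$ except possibly the cubic $m\int_M f\trace(A^3)$ is non-negative, so the whole problem hinges on a sharp pointwise control of that cubic invariant.

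For this I would bring in \eqref{eq: caract_bih_hipersurf_spheres}(ii): wherever $\grad f\ne 0$, the direction of $\grad f$ is a principal direction of $A$ with principal curvature $-mf/2$, and its remaining $m-1$ eigenvalues have explicit sum $3mf/2$ and square-sum $|A|^2 - m^2 f^2/4$. Decomposing $A$ into this distinguished eigenline and its orthogonal complement, and applying Okumura's inequality to the traceless part of $A$, produces a pointwise lower bound for $f\trace(A^3)$ in terms of $(m-|A|^2)f^2$ and $|\grad f|^2$ which, integrated against $(\star)$, yields an inequality $\int_M \Phi\, v_g\le 0$ with $\Phi\ge 0$. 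Equality then forces $|\nabla A|\equiv 0$, $\grad f\equiv 0$ and $|A|^2\equiv m$ on $\{f\ne 0\}$; since $\varphi$ is proper biharmonic, $f\not\equiv 0$, so connectedness and continuity extend $|A|^2\equiv m$ to all of $M$. The main obstacle in this plan is the algebraic bookkeeping for the cubic term: the Okumura-type bound must be applied with exactly the right sharp constant so that the integrated estimate closes, and it is precisely here that the invariant formalism (replacing Chen's original local-frame computation) produces the shortening the authors advertise.
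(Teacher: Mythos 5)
Your preliminary identities are correct (multiplying \eqref{eq: caract_bih_hipersurf_spheres}(i) by $f$ and integrating, and the Codazzi/divergence identity $\int_M\langle A,\nabla\grad f\rangle=-m\int_M|\grad f|^2$), but the argument breaks at exactly the step you defer. First, a repairable slip: with the paper's sign conventions the Simons formula for a hypersurface of $\mathbb{S}^{m+1}$ is
$$\tfrac12\Delta|A|^2=-|\nabla A|^2-m\langle A,\nabla\grad f\rangle+|A|^4-m|A|^2+m^2f^2-mf\trace(A^3),$$
so you have dropped the $m^2f^2$ term; your $(\star)$ already fails on $\mathbb{S}^m(1/\sqrt2)$, where the left side is $m^2$ and the right side is $0$. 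The genuine gap is the control of $m\int_M f\trace(A^3)$. Okumura's inequality applied to $\phi=A-f\Id$ gives only $f\trace(A^3)\ge-|f|\tfrac{m-2}{\sqrt{m(m-1)}}|\phi|^3+3f^2|\phi|^2+mf^4$, and at a point with $|A|^2$ near $m$ and $f$ small --- precisely the regime the theorem must handle --- the negative term is of order $|f|$ while all the nonnegative terms available in the (corrected) $(\star)$, other than $|\nabla A|^2$ for which you have no lower bound, are of order $f^2$. The eigenvalue data from \eqref{eq: caract_bih_hipersurf_spheres}(ii) does not rescue this: it fixes one eigenvalue and the first two power sums of the rest, leaving the third symmetric function as free as Okumura permits, and it gives nothing on the set $\{\grad f=0\}$. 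So the inequality $\int_M\Phi\le0$ with $\Phi\ge0$ that your plan requires does not follow from these ingredients.

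The paper's proof takes a different and, for this statement, essential route: it runs the Bochner--Weitzenb\"ock argument on $|\grad f|^2$ rather than on $|A|^2$. The curvature term that then appears is $\ricci(\grad f,\grad f)$, which by the Gauss equation and \eqref{eq: caract_bih_hipersurf_spheres}(ii) equals $\bigl(m-1-\tfrac{3m^2}{4}f^2\bigr)|\grad f|^2$ exactly; no cubic invariant of $A$ ever enters. The only estimates needed are $|A|^2\ge\tfrac{m^2(m+8)}{4(m-1)}f^2$ (the same eigenvalue decomposition you invoke, plus Cauchy--Schwarz on the remaining eigenvalues) and $|\nabla\grad f|^2\ge\tfrac1m(\Delta f)^2$. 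Your Simons-identity strategy is essentially the paper's proof of Theorem~\ref{th: jchen2}, where the term $\langle T,A\rangle=-\tfrac12\sum_{i,j}(\lambda_i-\lambda_j)^2R_{ijij}$ (which is where $f\trace(A^3)$ hides) is killed by the additional hypotheses of non-negative sectional curvature and $m\le10$; without such hypotheses that term is not controllable, which is exactly why the two theorems are proved differently. To salvage your plan you would need a sharp lower bound for $f\trace(A^3)$ coming from biharmonicity itself, not from Okumura.
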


\begin{proof}
We may assume that $M$ is orientable, since, otherwise, we consider the double covering $\tilde{M}$ of $M$. This is compact, connected and orientable, and in the given hypotheses $\tilde{\varphi}:\tilde M\to \mathbb{S}^{m+1}$ is proper biharmonic and $|\tilde A|^2\leq m$. Moreover, $\tilde{\varphi}(\tilde{M})=\varphi(M)$.

As $M$ is orientable, we fix a unit global section $\eta\in C(NM)$ and denote $A=A_\eta$ and $f=(\trace A)/m$.
In the following we shall prove that
\begin{eqnarray}\label{eq: fund_ineq_chen1}
&&\frac{1}{2}\Delta \left(|\grad f|^2+\frac{m^2}{8}f^4+f^2\right)+\frac{1}{2}\Div(|A|^2\grad f^2)\leq\nonumber\\
&&\leq\frac{8(m-1)}{m(m+8)}(|A|^2-m) |A|^2 f^2,
\end{eqnarray}
on M, and this will lead to the conclusion.

From \eqref{eq: caract_bih_hipersurf_spheres}(i) one easily gets
\begin{equation}\label{eq: delta_f^2}
\frac{1}{2}\Delta f^2=(m-|A|^2)f^2-|\grad f|^2
\end{equation}
and
\begin{equation}\label{eq: delta_f^4}
\frac{1}{4}\Delta f^4=(m-|A|^2)f^4-3f^2|\grad f|^2.
\end{equation}

From the Weitzenb\"{o}ck formula we have
\begin{equation}\label{eq: Weitz_norm_grad}
\frac{1}{2}\Delta |\grad f|^2=-\langle\trace\nabla^2\grad f,\grad f\rangle-|\nabla\grad f|^2,
\end{equation}
and, since
$$
\trace \nabla^2\grad f=-\grad(\Delta f)+ \ricci(\grad f),
$$
we obtain
\begin{equation}\label{eq: cons_Weitz_norm_grad}
\frac{1}{2}\Delta |\grad f|^2=\langle \grad \Delta f,\grad f\rangle-\ricci(\grad f,\grad f)-|\nabla\grad f|^2.
\end{equation}

Equations \eqref{eq: caract_bih_hipersurf_spheres}(i) and \eqref{eq: delta_f^2} imply
\begin{eqnarray}\label{eq: grad_delta_f}
\langle \grad \Delta f,\grad f\rangle&=&(m-|A|^2)|\grad f|^2-\frac{1}{2}\langle \grad |A|^2, \grad f^2\rangle\nonumber\\
&=&(m-|A|^2)|\grad f|^2-\frac{1}{2}\left(\Div(|A|^2\grad f^2)+|A|^2\Delta f^2\right)\nonumber\\
&=&m|\grad f|^2-\frac{1}{2}\Div(|A|^2\grad f^2)-|A|^2(m-|A|^2)f^2.
\end{eqnarray}

From the Gauss equation of $M$ in $\mathbb{S}^{m+1}$ we obtain
\begin{equation}\label{eq:ricci-minsn}
\ricci(X,Y)=(m-1)\langle X,Y\rangle+\langle A(X),Y\rangle\trace A-\langle A(X), A(Y)\rangle,
\end{equation}
for all $X, Y\in C(TM)$, therefore, by using \eqref{eq: caract_bih_hipersurf_spheres}(ii),
\begin{equation}\label{eq: cons_Gauss}
\ricci(\grad f,\grad f)=\left(m-1-\frac{3m^2}{4}f^2\right)|\grad f|^2.
\end{equation}

Now, by substituting \eqref{eq: grad_delta_f} and \eqref{eq: cons_Gauss} in \eqref{eq: cons_Weitz_norm_grad} and using \eqref{eq: delta_f^2} and \eqref{eq: delta_f^4}, one obtains
\begin{eqnarray*}
\frac{1}{2}\Delta |\grad f|^2&=&\left(1+\frac{3m^2}{4}f^2\right)|\grad f|^2-\frac{1}{2}\Div(|A|^2\grad f^2)\\
&&-|A|^2(m-|A|^2)f^2-|\nabla\grad f|^2\\
&=&-\frac{1}{2}\Delta f^2-\frac{m^2}{16}\Delta f^4-(m-|A|^2)\left(|A|^2-\frac{m^2}{4}f^2-1\right)f^2\\
&&-\frac{1}{2}\Div(|A|^2\grad f^2)-|\nabla\grad f|^2.
\end{eqnarray*}
Hence
\begin{eqnarray}\label{eq: eq_int_1}
&-\frac{1}{2}\Delta \left(|\grad f|^2+\frac{m^2}{8}f^4+f^2\right)-\frac{1}{2}\Div(|A|^2\grad f^2)=\nonumber\\
&=(m-|A|^2)\left(|A|^2-\frac{m^2}{4}f^2-1\right)f^2+|\nabla\grad f|^2.
\end{eqnarray}

We shall now verify that
\begin{equation}\label{eq: fund_ineq1}
(m-|A|^2)\left(|A|^2-\frac{m^2}{4}f^2-1\right)\geq (m-|A|^2)\left(\frac{9}{m+8}|A|^2-1\right),
\end{equation}
at every point of $M$.
Let us now fix a point $p\in M$. We have two cases.\\
{\it Case 1.} If $\grad_p f\neq 0$, then $e_1=({\grad_p f})/{|\grad_p f|}$ is a principal direction for $A$ with principal curvature $\lambda_1=-m f(p)/2$. By considering $e_k\in T_pM$, $k=2,\ldots,m$, such that $\{e_i\}_{i=1}^m$ is an orthonormal basis in $T_pM$ and $A(e_k)=\lambda_k e_k$, we get at $p$
\begin{eqnarray}\label{eq: |A|}
|A|^2&=&\sum_{i=1}^m |A(e_i)|^2=|A(e_1)|^2+\sum_{k=2}^m |A(e_k)|^2=\frac{m^2}{4}f^2+\sum_{k=2}^m \lambda_k^2\nonumber\\
&\geq& \frac{m^2}{4}f^2+\frac{1}{m-1}\left(\sum_{k=2}^m \lambda_k\right)^2=\frac{m^2(m+8)}{4(m-1)}f^2,
\end{eqnarray}
thus inequality \eqref{eq: fund_ineq1} holds at $p$.
\\
{\it Case 2.} If $\grad_p f = 0$, then either there exists an open set $U\subset M$, $p\in U$, such that $\grad f_{/U}=0$, or $p$ is a limit point for the set $V=\{q\in M: \grad_q f\neq 0\}$.\\
In the first situation, we get that $f$ is constant on $U$, and from a unique continuation result for biharmonic maps (see \cite{O03}), this constant must be different from zero. Equation \eqref{eq: caract_bih_hipersurf_spheres}(i) implies $|A|^2=m$ on $U$, and therefore inequality \eqref{eq: fund_ineq1} holds at $p$.\\
In the second situation, by taking into account {\it Case 1} and passing to the limit, we conclude that inequality \eqref{eq: fund_ineq1} holds at $p$.

In order to evaluate the term $|\nabla \grad f|^2$ of equation \eqref{eq: eq_int_1}, let us consider a local orthonormal frame field $\{E_i\}_{i=1}^m$ on $M$. Then, also using \eqref{eq: caract_bih_hipersurf_spheres}(i),
\begin{eqnarray}\label{eq: fund_ineq2}
|\nabla \grad f|^2&=&\sum_{i,j=1}^m\langle \nabla_{E_i}\grad f,E_j\rangle^2\nonumber\geq\sum_{i=1}^m\langle \nabla_{E_i}\grad f,E_i\rangle^2\\
&\geq&\frac{1}{m}\left(\sum_{i=1}^m\langle \nabla_{E_i}\grad f,E_i\rangle\right)^2= \frac{1}{m}(\Delta f)^2\nonumber\\
&=&\frac{1}{m}(m-|A|^2)^2 f^2.
\end{eqnarray}
In fact, \eqref{eq: fund_ineq2} is a global formula.

Now, using \eqref{eq: fund_ineq1} and \eqref{eq: fund_ineq2} in \eqref{eq: eq_int_1}, we obtain \eqref{eq: fund_ineq_chen1}, and by integrating it, since $|A|^2\leq m$, we get
\begin{equation}\label{eq: int3}
(|A|^2-m)|A|^2 f^2=0
\end{equation}
on $M$. Suppose that there exists $p\in M$ such that $|A(p)|^2\neq m$. Then there exists an open set $U\subset M$, $p\in U$, such that $|A|^2_{/U}\neq m$. Equation \eqref{eq: int3} implies that $|A|^2 f^2_{/U}=0$.
Now, if there were a $q\in U$ such that $f(q)\neq 0$, then $A(q)$ would be zero and, therefore, $f(q)=0$.
 Thus $f_{/U}=0$ and, since $M$ is proper biharmonic, this is a contradiction. Thus $|A|^2=m$ on $M$ and $\Delta f=0$, i.e. $f$ is constant and we conclude.
\end{proof}

\begin{remark}
It is worth pointing out that the statement of Theorem~\ref{th: jchen1} is similar in the minimal case: if $\varphi:M^m\to \mathbb{S}^{m+1}$ is a minimal hypersurface with $|A|^2\leq m$, then either $|A|=0$ or $|A|^2=m$ (see \cite{S68}).
By way of contrast, an analog of Proposition~\ref{prop: |B|>m} is not true in the minimal case. In fact, it was proved in \cite{PT83} that if a minimal hypersurface $\varphi:M^3\to \mathbb{S}^{4}$ has $|A|^2>3$, then
$|A|^2\geq 6$.
\end{remark}
Obviously, from Proposition~\ref{prop: |B|>m} and Theorem~\ref{th: jchen1} we get the following result.

\begin{proposition}
Let $\varphi:M^m\to \mathbb{S}^{m+1}$ be a compact hypersurface. If $\varphi$ is proper biharmonic and $|A|^2$ is constant, then $\varphi$ is CMC and $|A|^2=m$.
\end{proposition}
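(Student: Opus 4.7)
The plan is to deduce this proposition directly from Proposition~\ref{prop: |B|>m} and Theorem~\ref{th: jchen1}; indeed, the wording ``Obviously'' in the lead-in sentence already signals that no new argument is needed. Since $|A|^2$ is constant on $M$, its constant value $c$ satisfies either $c \geq m$ or $c \leq m$. In the first case Proposition~\ref{prop: |B|>m} applies and yields that $\varphi$ is CMC with $|A|^2 = m$. In the second case Theorem~\ref{th: jchen1} applies and gives the same conclusion. Thus the same dichotomy handles all possibilities, and the conclusion ``$\varphi$ is CMC and $|A|^2=m$'' is immediate.

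I do not expect any genuine obstacle here: the constancy of $|A|^2$ trivially reduces the problem to picking one side of the bound $|A|^2 \geq m$ or $|A|^2 \leq m$, both of which have already been settled in the previous two statements. All of the real difficulty was absorbed into the proof of Theorem~\ref{th: jchen1}, in particular the Weitzenb\"{o}ck-based identity leading to the key inequality \eqref{eq: fund_ineq_chen1}, the Cauchy--Schwarz type pointwise estimate \eqref{eq: |A|} that exploits the fact that $-mf/2$ is a principal curvature of $A$ in the direction of $\grad f$, and the unique continuation argument needed at points where $\grad f$ vanishes. The present proposition is really a packaging statement emphasizing that \emph{either} one-sided hypothesis on $|A|^2$ is by itself strong enough to force a compact proper biharmonic hypersurface into the rigid class $|A|^2 = m$; in particular, mere constancy of $|A|^2$ suffices, without assuming in advance which side of $m$ it lies on.
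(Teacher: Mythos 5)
Your argument is correct and is exactly the paper's own: the paper introduces this proposition with ``Obviously, from Proposition~\ref{prop: |B|>m} and Theorem~\ref{th: jchen1} we get the following result,'' i.e.\ the constant value of $|A|^2$ is either $\geq m$ or $\leq m$ and the corresponding prior result applies. Nothing further is needed.
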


The next result is a direct consequence of Proposition~\ref{prop: |B|>m}.
\begin{proposition}
Let $\varphi:M^m\to \mathbb{S}^{m+1}$ be a compact hypersurface. If $\varphi$ is proper biharmonic and $|H|^2\geq {4(m-1)}/({m(m+8)})$, then $\varphi$ is CMC.
Moreover,
\begin{itemize}
\item[(i)]  if $m\in\{2,3\}$, then $\varphi(M)$ is a small hypersphere $\mathbb{S}^m(1/\sqrt 2)$;
\item[(ii)] if $m=4$, then $\varphi(M)$ is a small hypersphere $\mathbb{S}^4(1/\sqrt 2)$ or a standard product of spheres $\mathbb{S}^3(1/\sqrt 2)\times \mathbb{S}^1(1/\sqrt 2)$.
\end{itemize}
\end{proposition}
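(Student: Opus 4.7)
The plan is to use the hypothesis to force $|A|^2\geq m$ pointwise on $M$, after which Proposition~\ref{prop: |B|>m} immediately delivers the CMC conclusion with $|A|^2=m$; the low-dimensional classifications already established in Section~3 will then dispose of (i) and (ii).

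First, reducing to the orientable case by passing to the double cover (as in the proof of Theorem~\ref{th: jchen1}), I would set $f=(\trace A)/m$, so that $f^{2}=|H|^{2}\geq \delta:=4(m-1)/(m(m+8))>0$ at every point of $M$. The key pointwise estimate, isolated from Case~1 of the proof of Theorem~\ref{th: jchen1}, is
$$
|A|^{2}(p) \;\geq\; \frac{m^{2}(m+8)}{4(m-1)}\, f^{2}(p),
$$
valid at every $p\in M$ with $\grad_{p} f\neq 0$: it follows from \eqref{eq: caract_bih_hipersurf_spheres}(ii), which makes $e_{1}=\grad f/|\grad f|$ an eigenvector of $A$ with eigenvalue $-mf/2$, combined with Cauchy--Schwarz applied to the remaining $m-1$ principal curvatures, whose sum equals $\trace A+mf/2=3mf/2$. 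Combined with $f^{2}\geq\delta$, this already gives $|A|^{2}(p)\geq m$ at every regular point of $f$. At a critical point of $f$ I would argue exactly as in Case~2 of the proof of Theorem~\ref{th: jchen1}: either $p$ has a neighbourhood on which $f$ is a nonzero constant, and then $\Delta f=0$ combined with \eqref{eq: caract_bih_hipersurf_spheres}(i) and $f\neq 0$ forces $|A|^{2}=m$, or $p$ is a limit of regular points of $f$ and the bound passes to the limit by continuity. Therefore $|A|^{2}\geq m$ on all of $M$, and Proposition~\ref{prop: |B|>m} applies.

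To complete (i) and (ii) I would combine the CMC conclusion with the rigidity theorems already presented. When $m=2$, the theorem of \cite{CMO01,BMO10} forces $\varphi(M)$ to be an open part of $\mathbb{S}^{2}(1/\sqrt 2)$, and compactness promotes this to the full small hypersphere. When $m=3$, the same theorem yields either $\mathbb{S}^{3}(1/\sqrt 2)$ or $\mathbb{S}^{2}(1/\sqrt 2)\times \mathbb{S}^{1}(1/\sqrt 2)$; but the product has $|H|^{2}=((m-2)/m)^{2}=1/9$, which is strictly smaller than $\delta=8/33$, so the hypothesis rules it out. When $m=4$, the opening theorem of Section~3 restricts the (now constant) $|H|$ to $(0,1/2]\cup\{1\}$, and the hypothesis $|H|^{2}\geq 1/4$ leaves only $|H|=1/2$ or $|H|=1$, which the same theorem identifies respectively with $\mathbb{S}^{3}(1/\sqrt 2)\times\mathbb{S}^{1}(1/\sqrt 2)$ and $\mathbb{S}^{4}(1/\sqrt 2)$.

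The only mildly delicate point in the plan is extending the pointwise estimate $|A|^{2}\geq m$ across the zero set of $\grad f$; everything else amounts to numerical checking of constants already pinned down by the earlier theorems.
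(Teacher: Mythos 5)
Your proposal is correct and follows essentially the same route as the paper, whose entire proof is the one-line observation that the estimate \eqref{eq: |A|} together with the hypothesis forces $|A|^2\geq m$, so that Proposition~\ref{prop: |B|>m} and the earlier low-dimensional classification theorems apply. Your extra care in extending $|A|^{2}\geq m$ across the critical set of $f$ (via the locally-constant/limit-point dichotomy from Case~2 of Theorem~\ref{th: jchen1}) only fills in a detail the paper leaves implicit, and your numerical checks for $m\in\{2,3,4\}$ are all accurate.
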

\begin{proof}
Taking into account \eqref{eq: |A|}, the hypotheses imply $|A|^2\geq m$.
\end{proof}

For the non-compact case we obtain the following.
\begin{proposition}
Let $\varphi:M^m\to \mathbb{S}^{m+1}$, $m>2$, be a non-compact hypersurface. Assume that $M$ is complete and has non-negative Ricci curvature. If $\varphi$ is proper biharmonic, $|A|^2$ is constant and $|A|^2\geq m$, then $\varphi$ is CMC and $|A|^2=m$. In this case $|H|^2\leq({(m-2)}/{m})^2$.
\end{proposition}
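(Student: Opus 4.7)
My plan is to reduce to the CMC case via a superharmonicity argument and then invoke the classification in the CMC setting. After passing to the orientable double cover of $M$ if necessary (completeness, non-compactness, the Ricci lower bound, and proper biharmonicity all transfer), fix a global unit normal $\eta$, set $A=A_\eta$ and $f=(\trace A)/m$, so $|H|=|f|$. By Corollary~\ref{cor: caract_hypersurf_bih} we have $\Delta f=(m-|A|^2)f$ and $A(\grad f)=-(m/2)f\grad f$, and since $|A|^2$ is a constant $\geq m$ this gives
\[
\tfrac{1}{2}\Delta f^2=(m-|A|^2)f^2-|\grad f|^2\leq 0,
\]
so $f^2$ is a nonnegative, $C^2$, superharmonic function on $M$.

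The next step is to show that $f^2$ is bounded above. At every point where $\grad f\neq 0$, the eigenvalue computation that produced \eqref{eq: |A|} applies unchanged (it uses only that $\grad f/|\grad f|$ is a principal direction with eigenvalue $-mf/2$) and yields $|A|^2\geq m^2(m+8)/(4(m-1))\,f^2$, hence $f^2\leq 4(m-1)|A|^2/(m^2(m+8))$. If $\grad f$ vanishes identically on some nonempty open set $U$, then $f|_U$ is constant; the unique continuation result for biharmonic maps cited in the proof of Theorem~\ref{th: jchen1} forces this constant to be nonzero, and then $\Delta f=(m-|A|^2)f$ immediately gives $|A|^2=m$. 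Otherwise $\{\grad f\neq 0\}$ is dense and the pointwise bound extends to all of $M$ by continuity. With $f^2$ bounded above, I apply the Omori--Yau maximum principle (available because $M$ is complete and $\ricci\geq 0$) to produce a sequence $p_k$ with $f^2(p_k)\to \sup f^2$, $|\grad f^2(p_k)|\to 0$ and $\Delta f^2(p_k)\to 0$. If $\sup f^2>0$, then $|f(p_k)|$ stays away from $0$, so $|\grad f(p_k)|\to 0$; letting $k\to\infty$ in
\[
\Delta f^2(p_k)=2(m-|A|^2)f^2(p_k)-2|\grad f(p_k)|^2
\]
gives $(m-|A|^2)\sup f^2=0$, hence $|A|^2=m$ (the alternative $\sup f^2=0$ would force $f\equiv 0$, contradicting proper biharmonicity). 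Once $|A|^2=m$ we have $\Delta f=0$ with $f$ bounded on a complete manifold of non-negative Ricci curvature, so Yau's Liouville theorem makes $f$ constant; thus $\varphi$ is CMC.

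For the final inequality, $\varphi$ is now CMC and proper biharmonic, so the first theorem of Section~\ref{sec: bih-sub}\,$\!$ (its part~(iii), where $m>2$) gives $|H|\in(0,(m-2)/m]\cup\{1\}$. If $|H|=1$ then $\varphi$ belongs to class {\bf B3}, and $\varphi(M)$ is an open subset of the compact small hypersphere $\mathbb{S}^m(1/\sqrt2)\subset\mathbb{S}^{m+1}$; since $M$ is complete and $\varphi$ is an isometric immersion between manifolds of the same dimension, $\varphi$ is a Riemannian covering onto $\mathbb{S}^m(1/\sqrt2)$, and simple connectedness of $\mathbb{S}^m$ for $m>2$ makes $\varphi$ a diffeomorphism. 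This contradicts the non-compactness of $M$, so $|H|\leq(m-2)/m$ and therefore $|H|^2\leq((m-2)/m)^2$. The most delicate point of the plan is the passage from the superharmonicity of $f^2$ to $|A|^2=m$: one has to dispose of the degenerate case where $\grad f$ vanishes on an open set (via unique continuation) and, in the generic case, extract $|\grad f(p_k)|\to 0$ from the maximum-principle datum $|\grad f^2(p_k)|\to 0$, which requires $|f(p_k)|$ to stay bounded away from zero.
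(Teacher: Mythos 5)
Your argument is correct and follows essentially the same route as the paper: superharmonicity of $f^2$, the Omori--Yau maximum principle to force $|A|^2=m$, and Yau's Liouville theorem to get $f$ constant (the paper leaves the final bound $|H|^2\le((m-2)/m)^2$ implicit; your non-compactness argument ruling out $|H|=1$ is the intended one). The only difference is your boundedness step: the paper simply notes $f^2=|H|^2\le |A|^2/m$ by Cauchy--Schwarz on $\trace A$, which holds at every point and also covers your branch where $\grad f$ vanishes on an open set, in which your density argument does not by itself supply the boundedness of $f$ needed to invoke Yau's theorem.
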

\begin{proof}
We may assume that $M$ is orientable (otherwise, we consider the double covering $\tilde{M}$ of $M$, which is non-compact, connected, complete, orientable, proper biharmonic and with non-negative Ricci curvature; the final result will remain unchanged).  We consider $\eta$ to be a global unit section in the normal bundle $NM$ of $M$ in $\mathbb{S}^{m+1}$.
Then, on $M$, we have
\begin{equation}\label{eq: d1}
\Delta f=(m-|A|^2)f,
\end{equation}
where $f=(\trace A)/m$,
and
\begin{equation}\label{eq: d2}
\frac{1}{2}\Delta f^2=(m-|A|^2)f^2-|\grad f|^2\leq 0.
\end{equation}
On the other hand, as $f^2=|H|^2\leq |A|^2/m$ is bounded, by the Omori-Yau Maximum Principle (see, for example, \cite{Y75}), there exists a sequence of points $\{p_k\}_{k\in \mathbb{N}}\subset M$ such that
$$
\Delta f^2(p_k)>-\frac{1}{k}\qquad\textrm{and}\qquad \lim_{k\to\infty} f^2(p_k)=\sup_M f^2.
$$
It follows that $\displaystyle{\lim_{k\to\infty}}\Delta f^2(p_k)=0$, so $\displaystyle{\lim_{k\to\infty}((m-|A|^2)f^2(p_k))}=0$.

As $\displaystyle{\lim_{k\to\infty} f^2(p_k)=\sup_M f^2>0}$, we get $|A|^2=m$. But from \eqref{eq: d1} follows that $f$ is a harmonic function on $M$. As $f$ is also a bounded function on $M$, by a result of Yau (see \cite{Y75}), we deduce that $f=$ constant.
\end{proof}

\begin{corollary}
Let $\varphi:M^m\to \mathbb{S}^{m+1}$ be a non-compact hypersurface. Assume that $M$ is complete and has non-negative Ricci curvature. If $\varphi$ is proper biharmonic, $|A|^2$ is constant and $|H|^2\geq {4(m-1)}/({m(m+8)})$, then $\varphi$ is CMC and $|A|^2=m$. In this case, $m\geq 4$ and $|H|^2\leq(({m-2})/{m})^2$.
\end{corollary}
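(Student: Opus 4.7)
The plan is to reduce the statement to the preceding proposition (which requires $|A|^{2}\geq m$) by establishing this pointwise lower bound, and then to sharpen the conclusion using the structure theorems of Section~3 to pin down the admissible values of $m$ and $|H|$.

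As in earlier arguments, I would first pass to the orientable double cover of $M$ if necessary; all of the hypotheses (completeness, non-negative Ricci curvature, proper biharmonicity, constancy of $|A|^{2}$, and the lower bound on $|H|^{2}$) transfer to $\tilde M$, and the conclusion is unaffected. Because $|H|^{2}\geq \frac{4(m-1)}{m(m+8)}>0$, the mean curvature vector $H$ is nowhere zero, so $\eta := H/|H|$ is a global unit section of $NM$; I then set $A = A_\eta$ and $f = (\trace A)/m = |H|$.

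The core step is to show $|A|^{2}\geq m$. If $\grad f \equiv 0$ on $M$, then $f$ is a positive constant, $\Delta f = 0$, and \eqref{eq: caract_bih_hipersurf_spheres}(i) gives $|A|^{2}=m$ immediately. Otherwise there exists $p\in M$ with $\grad_{p}f\neq 0$, and the pointwise inequality \eqref{eq: |A|} established in the proof of Theorem~\ref{th: jchen1} yields, at such a point,
$$
|A|^{2}(p)\;\geq\;\frac{m^{2}(m+8)}{4(m-1)}\,f^{2}(p)\;\geq\;\frac{m^{2}(m+8)}{4(m-1)}\cdot\frac{4(m-1)}{m(m+8)}\;=\;m.
$$
Since $|A|^{2}$ is constant, $|A|^{2}\geq m$ everywhere on $M$. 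The preceding proposition then gives that $\varphi$ is CMC and $|A|^{2}=m$.

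For the remaining claims, Theorem~\ref{teo:h=cst-b3} together with the first (unlabeled) theorem of Section~3 yields, for $m>2$, the dichotomy $|H|\in(0,(m-2)/m]\cup\{1\}$; the case $|H|=1$ would, by the corresponding completeness corollary, force $\varphi(M)=\mathbb{S}^{m}(1/\sqrt 2)$ and $\varphi$ to be an embedding, contradicting the non-compactness of $M$. Hence $|H|^{2}\leq ((m-2)/m)^{2}$ as soon as $m\geq 3$. To exclude $m=2,3$: for $m=2$ the low-dimensional corollary of Section~3 forces $\varphi(M)$ to be an open part of $\mathbb{S}^{2}(1/\sqrt 2)$ and thus $|H|=1$, again incompatible with non-compactness; for $m=3$ the hypothesis $|H|^{2}\geq 8/33$ is incompatible with $|H|^{2}\leq 1/9$. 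Hence $m\geq 4$. I do not expect any substantive obstacle, since all ingredients---the pointwise estimate \eqref{eq: |A|}, the preceding proposition, and the structural results of Section~3---are already available; the argument reduces to assembling them and performing the numerical check that eliminates $m=2,3$.
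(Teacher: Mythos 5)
Your proof is correct and follows exactly the route the paper intends: the corollary is stated without proof as a consequence of the preceding proposition, and the compact analogue's one-line proof (``Taking into account \eqref{eq: |A|}, the hypotheses imply $|A|^2\geq m$'') confirms that the key step is precisely your use of \eqref{eq: |A|} together with the constancy of $|A|^2$ to reduce to the case $|A|^2\geq m$. Your extra care with the case $\grad f\equiv 0$ and the explicit numerical exclusion of $m=2,3$ only makes explicit what the paper leaves to the reader.
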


\begin{proposition}
Let $\varphi:M^m\to \mathbb{S}^{m+1}$ be a non-compact hypersurface. Assume that $M$ is complete and has non-negative Ricci curvature. If $\varphi$ is proper biharmonic, $|A|^2$ is constant, $|A|^2\leq m$ and $H$ is nowhere zero, then $\varphi$ is CMC and $|A|^2=m$.
\begin{proof}
As $H$ is nowhere zero we consider $\eta=H/|H|$ a global unit section in the normal bundle.  Then, on $M$,
\begin{equation}
\Delta f=(m-|A|^2)f,
\end{equation}
where $f=|H|>0$. As $m-|A|^2\geq 0$ by a classical result (see, for example, \cite[pag.~2]{L06}) we conclude that  $m=|A|^2$ and therefore $f$ is constant.
\end{proof}
\end{proposition}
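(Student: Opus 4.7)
The plan is to exploit the fact that $H$ being nowhere zero lets us turn the biharmonic equation into a statement about a single smooth positive function to which a classical Liouville-type theorem applies.

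First, since $H \neq 0$ everywhere and $\varphi$ has codimension one, the section $\eta = H/|H|$ is a globally defined smooth unit section of $NM$, so $M$ is automatically orientable and we may take $A = A_\eta$ and $f = (\trace A)/m = |H|$ as globally defined quantities on $M$. The scalar biharmonic equation \eqref{eq: caract_bih_hipersurf_spheres}(i) from Corollary~\ref{cor: caract_hypersurf_bih} then reads
\begin{equation*}
\Delta f = (m-|A|^2)f
\end{equation*}
on all of $M$. Using the hypothesis $|A|^2 \leq m$ together with $f = |H| > 0$, the right-hand side is non-negative, so $\Delta f \geq 0$. With the paper's sign convention $\Delta = -\trace \nabla^2$, this means $f$ is superharmonic in the classical (analysts') sense.

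Next, I would invoke the classical Liouville-type result for superharmonic functions, exactly as cited in the paper: on a complete Riemannian manifold with non-negative Ricci curvature, every positive superharmonic function is constant (this is the reference to \cite[p.~2]{L06}, going back to Yau). Applied to our $f = |H| > 0$, this immediately yields that $f$ is constant, so $\varphi$ is CMC.

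Finally, once $f$ is constant, $\Delta f = 0$, so the identity $\Delta f = (m - |A|^2)f$ together with $f > 0$ forces $|A|^2 = m$, which completes the argument. The only mildly delicate step is the appeal to the Yau-type Liouville theorem; once its hypotheses ($M$ complete, $\ricci \geq 0$, $f > 0$, $f$ superharmonic) are verified, the rest is purely algebraic manipulation of the biharmonic equation.
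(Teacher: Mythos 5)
Your proof is correct and follows essentially the same route as the paper: use $\eta=H/|H|$ to get a global $f=|H|>0$ satisfying $\Delta f=(m-|A|^2)f\geq 0$, then apply the classical Liouville theorem for positive superharmonic functions on complete manifolds with non-negative Ricci curvature (the result cited from \cite{L06}) to conclude $f$ is constant and hence $|A|^2=m$. The only difference is cosmetic: you deduce first that $f$ is constant and then that $|A|^2=m$, which is a slightly cleaner ordering than the paper's phrasing.
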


\subsection{Case 3}
We first present another result of J.H.~Chen in \cite{C93}. In order to do that, we shall need the following lemma.
\begin{lemma}\label{lem: nablaA}
Let $\varphi:M^m\to \mathbb{S}^{m+1}$ be an orientable hypersurface, $\eta$ a unit section in the normal bundle, and put $A_\eta=A$. Then
\begin{itemize}
\item[(i)] $(\nabla A)(\cdot,\cdot)$ is symmetric,
\item[(ii)] $\langle(\nabla A)(\cdot,\cdot),\cdot\rangle$ is totally symmetric,
\item[(iii)] $\trace (\nabla A)(\cdot,\cdot)=m\grad f$.
\end{itemize}
\end{lemma}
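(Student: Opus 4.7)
The three assertions are classical, so the plan is simply to organize them in the right order so that each one feeds the next.

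\textbf{Part (i).} The key input is the Codazzi-Mainardi equation in a space form. Since $\mathbb{S}^{m+1}$ has constant sectional curvature, the normal component of $R^{\mathbb{S}^{m+1}}(X,Y)Z$ vanishes when $X,Y,Z$ are tangent to $M$; the Codazzi equation then reduces to $(\nabla_X A)(Y)=(\nabla_Y A)(X)$ for all $X,Y\in C(TM)$. Rewriting the left-hand side as $(\nabla A)(X,Y)$ and the right-hand side as $(\nabla A)(Y,X)$ gives the symmetry in the first two slots.

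\textbf{Part (ii).} Combined with (i), it suffices to prove symmetry between the second slot and the third (inner-product) slot, i.e. $\langle(\nabla A)(X,Y),Z\rangle=\langle(\nabla A)(X,Z),Y\rangle$. I would expand the left side using the definition $(\nabla A)(X,Y)=\nabla_X(AY)-A(\nabla_X Y)$ and the self-adjointness $\langle AY,Z\rangle=\langle Y,AZ\rangle$:
\begin{align*}
\langle(\nabla A)(X,Y),Z\rangle
&=X\langle AY,Z\rangle-\langle AY,\nabla_X Z\rangle-\langle A(\nabla_X Y),Z\rangle\\
&=X\langle Y,AZ\rangle-\langle Y,A(\nabla_X Z)\rangle-\langle \nabla_X Y,AZ\rangle\\
&=\langle Y,\nabla_X(AZ)-A(\nabla_X Z)\rangle=\langle Y,(\nabla A)(X,Z)\rangle.
\end{align*}
Together with (i), this yields the total symmetry of $\langle(\nabla A)(\cdot,\cdot),\cdot\rangle$.

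\textbf{Part (iii).} Fix $p\in M$, choose a geodesic orthonormal frame $\{E_i\}_{i=1}^m$ around $p$ (so that $\nabla E_i|_p=0$), and let $X\in C(TM)$. Using (ii) at the first step and the definition of $\nabla A$ at the second step,
\begin{align*}
\langle \trace(\nabla A)(\cdot,\cdot),X\rangle\big|_p
&=\sum_{i=1}^m\langle(\nabla A)(E_i,E_i),X\rangle\big|_p
=\sum_{i=1}^m\langle(\nabla A)(X,E_i),E_i\rangle\big|_p\\
&=\sum_{i=1}^m X\langle AE_i,E_i\rangle\big|_p=X(\trace A)|_p=m\,X(f)|_p=m\langle\grad f,X\rangle|_p.
\end{align*}
Since $X$ and $p$ are arbitrary, $\trace(\nabla A)=m\grad f$.

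There is no real obstacle: (i) is Codazzi in a space form, (ii) is self-adjointness of $A$, and (iii) is an adapted-frame computation made trivial by (ii). The only point to be a bit careful about is ensuring that the trace in (iii) is the one over the first two arguments and that the frame is chosen so that the Christoffel terms drop out at $p$.
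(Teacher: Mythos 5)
Your proof is correct; the paper states this lemma without proof, treating it as standard, and your argument (Codazzi in a space form for (i), self-adjointness of $A$ plus metric compatibility for (ii), and a geodesic-frame trace computation for (iii)) is exactly the standard route the authors have in mind. All three steps check out, including the point that the transpositions from (i) and (ii) generate full symmetry in the three slots.
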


\begin{theorem}[\cite{C93}]\label{th: jchen2}
Let $\varphi:M^m\to \mathbb{S}^{m+1}$ be a compact hypersurface. If $\varphi$ is proper biharmonic, $M$ has non-negative sectional curvature and $m\leq 10$, then $\varphi$ is CMC and $\varphi(M)$ is either $\mathbb{S}^{m}(1/\sqrt 2)$, or $\mathbb{S}^{m_1}(1/\sqrt 2)\times \mathbb{S}^{m_2}(1/\sqrt 2)$, $m_1+m_2=m$, $m_1\neq m_2$.
\end{theorem}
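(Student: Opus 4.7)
The plan is to reduce to showing that $\varphi$ has constant mean curvature; once CMC is established, the conclusion follows directly from the consequence of \cite[Theorem~2]{NS69} already stated earlier in the paper. As in the proof of Theorem~\ref{th: jchen1}, we may assume $M$ orientable (otherwise replace $M$ by its oriented double cover, which inherits both hypotheses and has the same image). Fix a unit normal section $\eta\in C(NM)$, set $A=A_\eta$ and $f=(\trace A)/m$, and argue by contradiction: suppose $f$ is non-constant, so $\grad f\ne 0$ on some non-empty open $U\subset M$.

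On $U$, equation \eqref{eq: caract_bih_hipersurf_spheres}(ii) forces $\grad f$ to be a principal direction of $A$ with principal curvature $\lambda_1=-mf/2$. Exactly as in \eqref{eq: |A|}, Cauchy--Schwarz applied to the remaining principal curvatures yields the pointwise lower bound
\[
|A|^2 \;\geq\; \frac{m^2(m+8)}{4(m-1)}\,f^2.
\]
Non-negative sectional curvature, together with the Gauss equation, gives $1+\lambda_i\lambda_j\geq 0$ for every pair of principal curvatures. Summing over $i\neq j$ produces the complementary upper bound $|A|^2\leq m^2 f^2 + m(m-1)$, and pairing the known $\lambda_1=-mf/2$ with each remaining $\lambda_k$ constrains the individual eigenvalues.

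The core computation follows the Bochner--Weitzenb\"ock pattern of the proof of Theorem~\ref{th: jchen1} and reproduces the identity \eqref{eq: eq_int_1} unchanged. To import the sectional-curvature information I would also bring in a Simons-type identity for $\tfrac{1}{2}\Delta|A|^2$ (for a hypersurface in $\mathbb{S}^{m+1}$ this produces $|\nabla A|^2$ plus a Hessian term in $mf$ plus a polynomial expression in the principal curvatures), and replace the crude estimate \eqref{eq: fund_ineq2} by a refined Kato-type inequality for $|\nabla\grad f|^2$, exploiting that $\grad f$ is an eigendirection of $A$ together with the total symmetry of $\langle(\nabla A)(\cdot,\cdot),\cdot\rangle$ from Lemma~\ref{lem: nablaA}(ii). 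Combining these inputs with the pointwise eigenvalue estimates above yields an integrable differential inequality of the form
\[
\tfrac{1}{2}\,\Delta\Phi + \tfrac{1}{2}\,\Div\Psi \;\leq\; -\,c_m\,(|A|^2-m)\,|A|^2\,f^2
\]
for suitable smooth $\Phi,\Psi$ and a dimension-dependent coefficient $c_m$.

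The main obstacle is this final algebraic step: verifying, across all configurations of $\lambda_2,\ldots,\lambda_m$ compatible with biharmonicity and $1+\lambda_i\lambda_j\geq 0$, that the integrand on the right has a definite sign, and pinpointing the dimensional threshold at which $c_m$ fails to be non-negative. The restriction $m\leq 10$ is expected to emerge here. Once the inequality is in hand, integration over compact $M$ forces $(|A|^2-m)|A|^2 f^2\equiv 0$, and the unique continuation theorem of \cite{O03} (used at the end of the proof of Theorem~\ref{th: jchen1}) excludes $f\equiv 0$ on open subsets. Hence $|A|^2\equiv m$ and $f$ is constant on $M$, and the cited consequence of \cite[Theorem~2]{NS69} completes the classification.
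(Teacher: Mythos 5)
Your toolkit is the right one (a Weitzenb\"ock/Simons identity for $\tfrac12\Delta|A|^2$, a refined lower bound on $|\nabla A|^2$ exploiting that $\grad f$ is an eigendirection of $A$, and the Gauss equation to convert non-negative sectional curvature into information on the principal curvatures), but the proposal stops exactly where the proof actually happens, and the shape you predict for the final inequality is wrong in a way that would prevent the argument from closing. The inequality the paper establishes is
\[
\tfrac{1}{2}\Delta\Bigl(|A|^2+\tfrac{m^2}{2}f^2\Bigr)\;\le\; \tfrac{3m^2(m-10)}{4(m-1)}|\grad f|^2-\tfrac{1}{2}\sum_{i,j=1}^m(\lambda_i-\lambda_j)^2R_{ijij},
\]
and the threshold $m\le10$ does \emph{not} emerge from a sign analysis of a $(|A|^2-m)|A|^2f^2$ term as you anticipate --- that is the mechanism of Theorem~\ref{th: jchen1}, which has the extra hypothesis $|A|^2\le m$. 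Here it comes from the competition between the term $m^2|\grad f|^2$ produced by $\langle\Delta A,A\rangle$ and the estimate $|\nabla A|^2\ge\frac{m^2(m+26)}{4(m-1)}|\grad f|^2$ (obtained from $\langle(\nabla A)(E_1,E_1),E_1\rangle=-\frac m2|\grad f|$ and $\sum_{k\ge2}\langle(\nabla A)(E_k,E_k),E_1\rangle=\frac{3m}{2}|\grad f|$); their difference is precisely $\frac{3m^2(m-10)}{4(m-1)}|\grad f|^2$. Your proposed right-hand side $-c_m(|A|^2-m)|A|^2f^2$ has no pointwise sign under the hypotheses of this theorem (nothing here bounds $|A|^2$ relative to $m$), so integrating it over $M$ yields no conclusion. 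This is a genuine gap, not merely an omitted computation.

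The endgame is also different and simpler than what you describe. Integrating the displayed inequality over compact $M$ forces both non-positive terms on the right to vanish identically; in particular $(\lambda_i-\lambda_j)^2R_{ijij}=0$, and the Gauss equation $R_{ijij}=1+\lambda_i\lambda_j$ then gives $(\lambda_i-\lambda_j)^2(1+\lambda_i\lambda_j)=0$, i.e.\ at most two distinct principal curvatures at every point. One concludes by Theorem~\ref{th: hypersurf_2curv}, which delivers the CMC property and the classification simultaneously, rather than by first proving $|A|^2\equiv m$ and CMC and then invoking the consequence of \cite[Theorem~2]{NS69}. Your reduction ``CMC implies the classification via \cite{NS69}'' is a valid fallback, but the route you sketch for establishing CMC is not the one that works, and as written the proposal does not constitute a proof.
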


\begin{proof}
For the same reasons as in Theorem~\ref{th: jchen1} we include a detailed proof of this result. We can assume that $M$ is orientable (otherwise, as in the proof of Theorem~\ref{th: jchen1}, we work with the oriented double covering of $M$). Fix a unit section $\eta\in C(NM)$ and put $A=A_\eta$ and $f=(\trace A)/m$.

We intend to prove that the following inequality holds on $M$,
\begin{equation}\label{eq: fund_ineq_chen2}
\frac{1}{2}\Delta\left(|A|^2+\frac{m^2}{2}f^2\right)\leq \frac{3m^2(m-10)}{4(m-1)}|\grad f|^2-\dfrac{1}{2}\sum_{i,j=1}^m (\lambda_i-\lambda_j)^2 R_{ijij}.
\end{equation}

From the Weitzenb\"ock formula we have
\begin{equation}\label{eq: Weitz_norm_A}
\frac{1}{2}\Delta |A|^2=\langle \Delta A,A\rangle-|\nabla A|^2.
\end{equation}
Let us first verify that
\begin{eqnarray}\label{eq: DeltaA_aux}
\trace(\nabla^2 A)(X,\cdot,\cdot)=\nabla_X (\trace\nabla A),
\end{eqnarray}
for all $X\in C(TM)$. Fix $p\in M$ and let $\{E_i\}_{i=1}^n$ be a local orthonormal frame field, geodesic at $p$. Then, also using Lemma~\ref{lem: nablaA}(i), we get at $p$,
\begin{eqnarray*}
\trace(\nabla^2 A)(X,\cdot,\cdot)&=&\sum_{i=1}^m (\nabla^2 A)(X,E_i,E_i)=\sum_{i=1}^m (\nabla_X \nabla A)(E_i,E_i)\nonumber\\
&=&\sum_{i=1}^m \{\nabla_X \nabla A(E_i,E_i)-2\nabla A(\nabla_X E_i,E_i)\}=\sum_{i=1}^m \nabla_X \nabla A(E_i,E_i)\nonumber\\
&=&\nabla_X (\trace\nabla A).
\end{eqnarray*}
Using Lemma~\ref{lem: nablaA}, the Ricci commutation formula (see, for example, \cite{B}) and \eqref{eq: DeltaA_aux}, we obtain
\begin{eqnarray}\label{eq: DeltaA}
\Delta A(X)&=&-(\trace\nabla^2 A) (X)=-\trace(\nabla^2 A)(\cdot,\cdot,X)=-\trace(\nabla^2 A)(\cdot,X,\cdot)\nonumber\\
&=&-\trace(\nabla^2 A)(X,\cdot,\cdot)- \trace(RA)(\cdot,X,\cdot)\nonumber\\
&=&-\nabla_X (\trace\nabla A)-\trace (R A)(\cdot,X,\cdot)\nonumber\\
&=& -m\nabla_X \grad f-\trace (R A)(\cdot,X,\cdot),
\end{eqnarray}
where
$$
RA(X,Y,Z)=R(X,Y)A(Z)-A(R(X,Y)Z),\quad \forall\,X,Y,Z\in C(TM).
$$

Also, using \eqref{eq: caract_bih_hipersurf_spheres}(ii) and Lemma~\ref{lem: nablaA}, we obtain
\begin{eqnarray}\label{eq: partial}
\trace\langle A(\nabla_{\cdot}\grad f),\cdot\rangle
&=&\trace \langle \nabla_{\cdot}A(\grad f)-(\nabla A)(\cdot,\grad f),\cdot\rangle\nonumber\\
&=&-\dfrac{m}{4}\trace \langle \nabla_{\cdot} \grad f^2,\cdot\rangle-\langle \trace(\nabla A),\grad f\rangle\nonumber\\
&=&\dfrac{m}{4}\Delta f^2-m|\grad f|^2.
\end{eqnarray}

Using \eqref{eq: DeltaA} and \eqref{eq: partial}, we get
\begin{eqnarray}\label{eq: DeltaA_A}
\langle \Delta A,A\rangle&=&\trace\langle \Delta A(\cdot),A(\cdot)\rangle\nonumber\\
&=&-m\trace\langle \nabla_{\cdot}\grad f,A(\cdot)\rangle+\langle T, A\rangle\nonumber\\
&=&-m\trace\langle A(\nabla_{\cdot}\grad f),\cdot\rangle+\langle T, A\rangle\nonumber\\
&=&m^2|\grad f|^2-\dfrac{m^2}{4}\Delta f^2+\langle T, A\rangle,
\end{eqnarray}
where $T(X)=-\trace (R A)(\cdot,X,\cdot)$, $X\in C(TM)$.

In the following we shall verify that
\begin{equation}\label{eq: estim_nablaA}
|\nabla A|^2\geq\dfrac{m^2(m+26)}{4(m-1)}|\grad f|^2,
\end{equation}
at every point of $M$. Now, let us fix a point $p\in M$.

If $\grad_p f=0$, then \eqref{eq: estim_nablaA} obviously holds at $p$.

If $\grad_p f\neq 0$, then on a neighborhood $U\subset M$ of $p$ we can consider an orthonormal frame field $E_1=({\grad f})/{|\grad f|}$, $E_2$,\ldots, $E_m$, where $E_k(f)=0$, for all $k=2,\ldots, m$.
Using \eqref{eq: caract_bih_hipersurf_spheres}(ii), we obtain on $U$
\begin{eqnarray}\label{eq: NA1}
\langle (\nabla A)(E_1,E_1),E_1\rangle&=&\frac{1}{|\grad f|^3}(\langle \nabla_{\grad f}A(\grad f),\grad f\rangle\nonumber\\&&
-\langle A(\nabla_{\grad f}\grad f),\grad f\rangle)\nonumber\\
&=&-\frac{m}{2}|\grad f|.
\end{eqnarray}
From here, using Lemma~\ref{lem: nablaA}, we also have on $U$
\begin{eqnarray}\label{eq: NA2}
\sum_{k=2}^m\langle (\nabla A)(E_k,E_k),E_1\rangle&=&\sum_{i=1}^m\langle (\nabla A)(E_i,E_i),E_1\rangle-\langle (\nabla A)(E_1,E_1),E_1\rangle\nonumber\\
&=&\langle \trace\nabla A,E_1\rangle+\frac{m}{2}|\grad f|=\frac{3m}{2}|\grad f|.
\end{eqnarray}
Using \eqref{eq: NA1} and \eqref{eq: NA2}, we have on $U$
\begin{eqnarray}
|\nabla A|^2&=&\sum_{i,j=1}^m|(\nabla A)(E_i,E_j)|^2 =\sum_{i,j,h=1}^m\langle(\nabla A)(E_i,E_j),E_h\rangle^2\nonumber\\
&\geq& \langle(\nabla A)(E_1,E_1),E_1\rangle^2+ 3\sum_{k=2}^m\langle(\nabla A)(E_k,E_k),E_1\rangle^2\nonumber\\
&\geq& \langle(\nabla A)(E_1,E_1),E_1\rangle^2+ \frac{3}{m-1}\left(\sum_{k=2}^m\langle(\nabla A)(E_k,E_k),E_1\rangle\right)^2\nonumber\\
&=&\dfrac{m^2(m+26)}{4(m-1)}|\grad f|^2,
\end{eqnarray}
thus \eqref{eq: estim_nablaA} is verified, and \eqref{eq: Weitz_norm_A} implies
\begin{equation}\label{eq: Delta_intermed}
\frac{1}{2}\Delta\left(|A|^2+\frac{m^2}{2} f^2\right)\leq \frac{3m^2(m-10)}{4(m-1)}|\grad f|^2+\langle T,A\rangle.
\end{equation}

Fix $p\in M$ and consider $\{e_i\}_{i=1}^m$ to be an orthonormal basis of $T_pM$, such that $A(e_i)=\lambda_i e_i$. Then, at $p$, we get
\begin{eqnarray*}\label{eq: T_A}
\langle T, A\rangle=-\dfrac{1}{2}\sum_{i,j=1}^m (\lambda_i-\lambda_j)^2 R_{ijij},
\end{eqnarray*}
and then \eqref{eq: Delta_intermed} becomes \eqref{eq: fund_ineq_chen2}.

Now, since $m\leq 10$ and $M$ has non-negative sectional curvature, we obtain
$$
\Delta\left(|A|^2+\frac{m^2}{2}|H|^2\right)\leq 0
$$
on $M$. As $M$ is compact, we have
$$
\Delta\left(|A|^2+\frac{m^2}{2}|H|^2\right)= 0
$$
on $M$, which implies
\begin{equation}\label{eq:lambdarijij}
(\lambda_i-\lambda_j)^2 R_{ijij}=0
\end{equation}
 on $M$. Fix $p\in M$.
From the Gauss equation for $\varphi$, $R_{ijij}=1+\lambda_i\lambda_j$, for all $i\neq j$, and from
\eqref{eq:lambdarijij} we obtain
$$
(\lambda_i-\lambda_j) (1+\lambda_i\lambda_j)=0,\quad i\neq j.
$$
Let us now fix $\lambda_1$. If there exists another principal curvature $\lambda_j\neq \lambda_1$, $j>1$, then from the latter relation we get that $\lambda_1\neq 0$ and  $\lambda_j=-1/\lambda_1$.
Thus $\varphi$ has at most two distinct principal curvatures at $p$. Since $p$ was arbitrarily fixed, we obtain that $\varphi$ has at most two distinct principal curvatures everywhere and we conclude by using Theorem~\ref{th: hypersurf_2curv}.
\end{proof}

\begin{proposition}\label{pro:nonnegricciesistepxpnozero}
Let $\varphi:M^m\to \mathbb{S}^{m+1}$ , $m\geq 3$, be a hypersurface. Assume that $M$ has non-negative sectional curvature and for all $p\in M$ there exists $X_p\in T_pM$, $|X_p|=1$, such that $\ricci(X_p,X_p)=0$. If $\varphi$ is proper biharmonic, then $\varphi(M)$ is an open part of $\mathbb{S}^{m-1}(1/\sqrt 2)\times\mathbb{S}^1(1/\sqrt 2)$.
\end{proposition}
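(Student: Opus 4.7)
The plan is to force $\varphi$ to be CMC and then invoke the classification theorem of Section~3 (the consequence of \cite[Theorem~2]{NS69}) for CMC proper biharmonic hypersurfaces with non-negative sectional curvature.

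Assume, for contradiction, that $\varphi$ is not CMC, so there is an open set $U \subset M$ on which $\grad f \neq 0$. Since $\{f \neq 0\}$ is open and cannot avoid $U$ entirely (otherwise $\grad f = 0$ on $U$), I may shrink $U$ to assume $f \neq 0$ on $U$ as well. Equation \eqref{eq: caract_bih_hipersurf_spheres}(ii) then says that $e_1 := \grad f / |\grad f|$ is a principal direction of $A$ with eigenvalue $\lambda_1 = -mf/2 \neq 0$. The identity \eqref{eq: cons_Gauss} gives $\ricci(\grad f, \grad f) = (m-1 - 3m^2 f^2/4)|\grad f|^2$, which is nonnegative by hypothesis, so $f^2 \leq 4(m-1)/(3m^2)$; were equality to hold throughout $U$, the positive function $f^2$ would be constant and $\grad f$ would vanish. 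So there exists a nonempty open subset $V \subset U$ on which $\ricci(e_1, e_1) > 0$ (and still $f \neq 0$, $\grad f \neq 0$).

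Fix $p \in V$ and complete $e_1$ to an orthonormal basis $\{e_i\}_{i=1}^m$ of $T_pM$ consisting of principal directions, $A(e_i) = \lambda_i e_i$. From the Gauss formula \eqref{eq:ricci-minsn} one checks $\ricci(e_i,e_j) = 0$ for $i \neq j$, so the Ricci operator is diagonal in $\{e_i\}$ and each $\ricci(e_i,e_i) = \sum_{k\neq i} K(e_i,e_k) \geq 0$. Expanding the hypothesis vector as $X_p = \sum_i a_i e_i$, the equation $\ricci(X_p,X_p) = \sum_i a_i^2 \ricci(e_i,e_i) = 0$ forces every summand to vanish; since $\ricci(e_1,e_1) > 0$ on $V$, there must exist $j \geq 2$ with $\ricci(e_j,e_j) = 0$. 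Then $K(e_j,e_k) = 0$ for every $k \neq j$, and the Gauss equation $K(e_j,e_k) = 1 + \lambda_j\lambda_k$ gives $\lambda_k = -1/\lambda_j$ for all $k \neq j$. Specializing to $k=1$ determines $\lambda_j = 2/(mf)$ and hence $\lambda_k = -mf/2$ for every $k \neq j$. Therefore on $V$ the shape operator has exactly two distinct principal curvatures, with multiplicities $1$ and $m-1$.

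Now $\trace A = mf$ reads $2/(mf) + (m-1)(-mf/2) = mf$, a single algebraic equation whose unique solution is $f^2 = 4/(m^2(m+1))$, a nonzero constant on $V$; this contradicts $\grad f \neq 0$ on $V$. Hence $\varphi$ is CMC, and the cited theorem yields that $\varphi(M)$ is an open part of $\mathbb{S}^m(1/\sqrt 2)$ or of $\mathbb{S}^{m_1}(1/\sqrt 2) \times \mathbb{S}^{m_2}(1/\sqrt 2)$ with $m_1+m_2 = m$, $m_1 \neq m_2$. On $\mathbb{S}^m(1/\sqrt 2)$ every sectional curvature is $2$ and $\ricci$ is strictly positive definite, ruling out the zero-Ricci direction. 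For the product, a quick computation using \eqref{eq:ricci-minsn} and the known principal curvatures $\pm 1$ gives Ricci eigenvalues $2(m_1-1)$ and $2(m_2-1)$, so the zero-Ricci hypothesis forces $m_1=1$ or $m_2=1$; since $m_1 \neq m_2$ and $m \geq 3$ we conclude (up to labeling) $m_1 = m-1$, $m_2 = 1$, which is the desired conclusion. The main obstacle is the pointwise step producing an index $j \geq 2$ with $\ricci(e_j,e_j) = 0$ and upgrading it to a two-principal-curvature structure on an open set; this is where both the diagonality of the Ricci tensor in principal directions and the prior restriction to the open subset $V$ (on which $e_1$ itself is not the zero-Ricci direction) are essential.
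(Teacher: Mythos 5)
Your proof is correct, but its architecture differs from the paper's. The paper's argument is entirely pointwise and never needs to discuss $\grad f$: at an arbitrary $p\in M$ it diagonalizes $A$, notes from \eqref{eq:ricci-minsn} that the principal directions also diagonalize $\ricci$, uses the hypothesis to produce a principal direction $e_{i_0}$ with $\ricci(e_{i_0},e_{i_0})=0$, and then (exactly as in your pointwise step) the non-negativity of the sectional curvatures forces $K_{i_0 j}=0$, i.e. $\lambda_j=-1/\lambda_{i_0}$ for all $j\neq i_0$. This already gives exactly two distinct principal curvatures everywhere, one of multiplicity one, and Theorem~\ref{th: hypersurf_2curv} then delivers both the CMC property and the classification, with the multiplicity-one condition singling out $\mathbb{S}^{m-1}(1/\sqrt 2)\times\mathbb{S}^1(1/\sqrt 2)$. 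You share that key pointwise lemma, but you embed it in a contradiction argument whose sole purpose is to establish CMC first: you restrict to an open set where $\grad f\neq 0$, use \eqref{eq: caract_bih_hipersurf_spheres}(ii) and \eqref{eq: cons_Gauss} to arrange $\ricci(e_1,e_1)>0$ so that the null-Ricci direction is distinct from $\grad f$, and then extract the constancy of $f$ from the trace identity. All of this is sound (and the final case analysis via the Ricci eigenvalues $2(m_1-1)$, $2(m_2-1)$ of the product is a correct way to finish from Theorem~3.6), but it is work the paper avoids: since the two-curvature conclusion holds at every point regardless of whether $\grad f$ vanishes there, one can quote Theorem~\ref{th: hypersurf_2curv} directly and let it supply the CMC property for free.
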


\begin{proof}
Let $p\in M$ be an arbitrarily fixed point, and $\{e_i\}_{i=1}^m$ an orthonormal basis in $T_pM$ such that $A(e_i)=\lambda_i e_i$. For $i\neq j$, using \eqref{eq:ricci-minsn},  we have that $\ricci(e_i,e_j)=0$. Therefore, $\{e_i\}_{i=1}^m$ is also a basis of eigenvectors for the Ricci curvature. Now, if $\ricci(e_i,e_i)>0$ for all $i=1,\ldots m$, then $\ricci(X,X)>0$ for all $X\in T_pM\setminus\{0\}$. Thus there must exist $i_0$ such that $\ricci(e_{i_0},e_{i_0})=0$. Assume that $\ricci(e_{1},e_{1})=0$. From $0=\ricci(e_{1},e_{1})=\sum_{j=2}^m R_{1j1j}=\sum_{j=2}^m K_{1j}$ and since $K_{1j}\geq 0$ for all
$j\geq 2$, we conclude that  $K_{1j}=0$ for all $j\geq 2$, that is  $1+\lambda_1 \lambda _j=0$  for all $j\geq 2$. The latter implies that $\lambda_1\neq 0$ and $\lambda_j=-1/\lambda_1$ for all $j\geq 2$. Thus $M$ has two distinct principal curvatures everywhere, one of them of multiplicity one.
\end{proof}

\begin{remark}
If $\varphi:M^m\to \mathbb{S}^{m+1}$, $m\geq 3$, is  a compact hypersurface, then the conclusion of
Proposition~\ref{pro:nonnegricciesistepxpnozero} holds replacing the hypothesis on the Ricci curvature with the requirement  that the first fundamental group is infinite. In fact, the full classification of compact hypersurfaces
in $\mathbb{S}^{m+1}$ with non-negative sectional curvature and infinite first fundamental group was given in \cite{C03}.
\end{remark}

\section{PMC biharmonic immersions in $\mathbb{S}^n$}

In this section we list some of the most important known results on PMC biharmonic submanifolds in spheres and we prove some new ones. In order to do that we first need the following lemma.

\begin{lemma}\label{lem: AH_B}
Let $\varphi:M^m\to N^n$ be an immersion. Then $|A_H|^2\leq |H|^2 |B|^2$ on $M$. Moreover, $|A_H|^2= |H|^2 |B|^2$ at $p\in M$ if and only if either $H(p)=0$, or the first normal of $\varphi$ at $p$ is spanned by $H(p)$.
\end{lemma}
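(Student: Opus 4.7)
The plan is to reduce the statement to a pointwise Cauchy--Schwarz argument using the fundamental identity $\langle A_\xi X,Y\rangle=\langle B(X,Y),\xi\rangle$, valid for any tangent vectors $X,Y$ and any normal vector $\xi$. Fix $p\in M$ and an orthonormal basis $\{e_i\}_{i=1}^m$ of $T_pM$. Using the identity above with $\xi=H(p)$, I would write
$$
|A_H|^2=\sum_{i,j=1}^m\langle A_H e_i,e_j\rangle^2=\sum_{i,j=1}^m\langle B(e_i,e_j),H\rangle^2,\qquad |B|^2=\sum_{i,j=1}^m|B(e_i,e_j)|^2.
$$

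The inequality then follows by applying the Cauchy--Schwarz inequality termwise,
$$
\langle B(e_i,e_j),H\rangle^2\le|B(e_i,e_j)|^2|H|^2,
$$
and summing over $i,j$ to obtain $|A_H|^2\le|H|^2|B|^2$ at $p$.

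For the equality case, I would analyze when Cauchy--Schwarz is saturated in each summand. If $H(p)=0$, both sides vanish and there is nothing to check. If $H(p)\ne 0$, equality in every term forces $B(e_i,e_j)(p)$ to be proportional to $H(p)$ for all $i,j$; by bilinearity this is equivalent to saying that the first normal space
$$
N_1(p)=\operatorname{span}\{B(X,Y)(p):X,Y\in T_pM\}
$$
is contained in $\operatorname{span}\{H(p)\}$. Since $H=\tfrac{1}{m}\operatorname{trace}B$ automatically lies in $N_1(p)$ and $H(p)\ne 0$, this containment is actually an equality, which is the desired characterization.

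The proof is essentially Cauchy--Schwarz, so there is no real obstacle; the only point requiring care is the equality discussion, where one must invoke the definition of the first normal space and observe that $H(p)\in N_1(p)$ to upgrade the inclusion $N_1(p)\subseteq\operatorname{span}\{H(p)\}$ to an equality when $H(p)\ne 0$.
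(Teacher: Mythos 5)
Your proposal is correct and follows essentially the same route as the paper: both reduce to the identity $\langle A_H e_i,e_j\rangle=\langle B(e_i,e_j),H\rangle$ and a termwise Cauchy--Schwarz estimate, with the equality case characterized by all $B(e_i,e_j)(p)$ being proportional to $H(p)$. Your added observation that $H(p)\in N_1(p)$ upgrades the inclusion to an equality of spans is a correct (and slightly more explicit) handling of the equality discussion than the paper's.
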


\begin{proof}
Let $p\in M$. If $|H(p)|=0$, then the conclusion is obvious.
Consider now the case when $|H(p)|\neq0$, let $\eta_p=H(p)/|H(p)|\in N_pM$ and let $\{e_i\}_{i=1}^m$ be a basis in $T_pM$. Then, at $p$,
\begin{eqnarray*}
|A_H|^2&=&\sum_{i,j=1}^m\langle A_H(e_i),e_j\rangle^2=\sum_{i,j=1}^m\langle B(e_i,e_j),H\rangle^2=|H|^2\sum_{i,j=1}^m\langle B(e_i,e_j),\eta_p\rangle^2\\
&\leq& |H|^2 |B|^2.
\end{eqnarray*}
In this case equality holds if and only if $\displaystyle{\sum_{i,j=1}^m\langle B(e_i,e_j),\eta_p\rangle^2=|B|^2},$
i.e.
$$
\langle B(e_i,e_j),\xi_p\rangle =0,\quad \forall\,\xi_p\in N_pM\,\text{ with}\,\, \xi_p\perp H(p).
$$
This is equivalent to the first normal at $p$ being spanned by $H(p)$ and we conclude.
\end{proof}

Using the above lemma we can prove the following lower bound for the norm of the second fundamental form.

\begin{proposition}
Let $\varphi:M^m\to \mathbb{S}^n$ be a PMC proper biharmonic  immersion. Then $m\leq |B|^2$ and equality holds if and only if $\varphi$ induces  a CMC proper biharmonic  immersion of $M$ into a totally geodesic sphere $\mathbb{S}^{m+1}\subset \mathbb{S}^n$.
\end{proposition}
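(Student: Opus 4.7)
The plan is to combine the PMC biharmonic characterization in Corollary~\ref{th: caract_bih_pmc} with the pointwise bound of Lemma~\ref{lem: AH_B}, and then to use a reduction-of-codimension argument to handle the equality case.

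First, since $\varphi$ is PMC and proper biharmonic, the second equation of \eqref{eq: caract_bih_Hparallel_II} gives $|A_H|^2=m|H|^2$ on $M$. On the other hand, Lemma~\ref{lem: AH_B} yields $|A_H|^2\leq |H|^2|B|^2$. Hence
\[
m|H|^2 \leq |H|^2 |B|^2 \quad \text{on } M.
\]
Because $\varphi$ is PMC we have $\nabla^{\perp}H=0$, so $|H|$ is constant; moreover it is nonzero since $\varphi$ is proper biharmonic. Dividing by $|H|^2$ we obtain $m\leq |B|^2$, proving the inequality.

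Next I would treat the equality case. Suppose $m=|B|^2$ on $M$. Then equality holds in Lemma~\ref{lem: AH_B} at every point, which, since $H$ is nowhere zero, means that the first normal bundle $N^{1}M$ of $\varphi$ is at each point spanned by $H$. In particular, $N^1 M$ is a parallel line subbundle of $NM$ (it coincides with $\mathbb{R}\,H$, which is parallel because $\nabla^\perp H=0$), and $B$ takes values in this parallel line subbundle. This is exactly the hypothesis of the reduction-of-codimension theorem (Erbacher's theorem) for submanifolds of the sphere: there exists a totally geodesic $\mathbb{S}^{m+1}\subset \mathbb{S}^n$ such that $\varphi(M)\subset \mathbb{S}^{m+1}$. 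The induced immersion $\tilde\varphi:M\to \mathbb{S}^{m+1}$ has the same mean curvature vector and second fundamental form as $\varphi$; in particular it is proper biharmonic and CMC (with $|H|$ the same positive constant).

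Conversely, if $\varphi$ factors through a totally geodesic $\mathbb{S}^{m+1}\subset \mathbb{S}^n$ as a CMC proper biharmonic immersion, then at every point the image of the second fundamental form lies in a one-dimensional subspace of the normal bundle (spanned by $H$), so equality holds in Lemma~\ref{lem: AH_B}, and combined with $|A_H|^2=m|H|^2$ we obtain $|B|^2=m$.

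The only subtle step is the reduction-of-codimension argument. Concretely, I would verify that $\mathbb{R}H$ is a parallel subbundle of $NM$ containing $N^1 M$, and invoke the standard result (for submanifolds of space forms) that under these hypotheses $\varphi(M)$ is contained in a totally geodesic submanifold of dimension $m+\dim N^1 M = m+1$. Everything else reduces to a short algebraic manipulation of the PMC biharmonic equation together with Lemma~\ref{lem: AH_B}.
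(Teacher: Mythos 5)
Your proof is correct and follows essentially the same route as the paper: combine $|A_H|^2=m|H|^2$ from Corollary~\ref{th: caract_bih_pmc} with Lemma~\ref{lem: AH_B} to get $m\le|B|^2$, and use Erbacher's codimension reduction for the equality case. The only difference is that you spell out the parallelism of the first normal bundle and the converse implication, which the paper leaves implicit.
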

\begin{proof}
By Corollary~\ref{th: caract_bih_pmc} we have $|A_H|^2=m|H|^2$ and, by using Lemma~\ref{lem: AH_B}, we obtain $m\leq|B|^2$.

Since $H$ is parallel and nowhere zero, equality holds if and only if the first normal is spanned by $H$, and we can apply the codimension reduction result of J.~Erbacher (\cite{E71}) to obtain the existence of a totally geodesic sphere $\mathbb{S}^{m+1}\subset \mathbb{S}^n$, such that $\varphi$ is an immersion of $M$ into $\mathbb{S}^{m+1}$. Since $\varphi:M^m\to \mathbb{S}^n$ is PMC proper biharmonic, the restriction $M^m\to \mathbb{S}^{m+1}$ is CMC proper biharmonic.
\end{proof}

\begin{remark}
\begin{itemize}
\item[(i)] Let $\varphi=\imath\circ\phi:M\to \mathbb{S}^n$ be a proper biharmonic  immersion of class {\bf B3}. Then $m\leq|B|^2$ and equality holds if and only if the induced $\phi$ is totally geodesic.

\item[(ii)] Let $\varphi=\imath\circ(\phi_1\times\phi_2): M_1\times M_2\to \mathbb{S}^n$ be a proper biharmonic  immersion of class {\bf B4}. Then $m\leq|B|^2$ and equality holds if and only if both $\phi_1$ and $\phi_2$ are totally geodesic.
\end{itemize}
\end{remark}

The above remark suggests to look for PMC proper biharmonic  immersions with $|H|=1$ and
$|B|^2=m$.

\begin{corollary}
Let $\varphi:M^m\to\mathbb{S}^n$ be a PMC proper biharmonic  immersion. Then $|H|=1$ and $|B|^2=m$ if and only if $\varphi(M)$ is an open part of $\mathbb{S}^{m}(1/\sqrt 2)\subset\mathbb{S}^{m+1}\subset\mathbb{S}^n$.
\end{corollary}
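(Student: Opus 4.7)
The plan is to deduce this from two already-proved ingredients: the preceding proposition (which does the codimension reduction under the hypothesis $|B|^2=m$) and Theorem~\ref{teo:h=cst-b3} (which characterises CMC proper biharmonic immersions with $|H|=1$). Both implications will be essentially bookkeeping once these are combined.

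For the direct implication, suppose $|H|=1$ and $|B|^2=m$. The equality $|B|^2=m$ triggers the preceding proposition, producing a totally geodesic $\mathbb{S}^{m+1}\subset\mathbb{S}^n$ through which $\varphi$ factors as a CMC proper biharmonic hypersurface $\bar\varphi:M\to\mathbb{S}^{m+1}$. Since the inclusion $\mathbb{S}^{m+1}\hookrightarrow\mathbb{S}^n$ is totally geodesic, both $B$ and the mean curvature vector $H$ are unchanged by passing to $\bar\varphi$, so $\bar\varphi$ still has $|H|=1$. I would then feed $\bar\varphi$ into Theorem~\ref{teo:h=cst-b3}: the case $|H|=1$ forces $\bar\varphi$ to factor through a minimal immersion into $\mathbb{S}^m(1/\sqrt 2)\subset\mathbb{S}^{m+1}$. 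Since $\dim M=m=\dim\mathbb{S}^m(1/\sqrt 2)$, this last immersion has trivial normal bundle and is automatically a local isometry, so $\varphi(M)$ is an open subset of $\mathbb{S}^m(1/\sqrt 2)\subset\mathbb{S}^{m+1}\subset\mathbb{S}^n$, as required.

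For the converse, suppose $\varphi(M)$ is an open part of $\mathbb{S}^m(1/\sqrt 2)\subset\mathbb{S}^{m+1}\subset\mathbb{S}^n$. Viewed inside the totally geodesic $\mathbb{S}^{m+1}$, this is an open piece of the umbilical small hypersphere, whose shape operator is $\pm\Id$; thus $|A|^2=m$ and $|H|=1$. The totally geodesic inclusion $\mathbb{S}^{m+1}\hookrightarrow\mathbb{S}^n$ does not modify either invariant, so $|B|^2=m$ and $|H|=1$ in $\mathbb{S}^n$ as well. No serious obstacle arises: the heavy machinery (Erbacher's codimension reduction and Theorem~\ref{teo:h=cst-b3}) has already been done; the only small point requiring care is the invariance of $|B|$ and $|H|$ under a totally geodesic inclusion, which is immediate from the splitting of the second fundamental form.
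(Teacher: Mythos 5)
Your proof is correct and follows exactly the route the paper intends: the corollary is stated without proof as an immediate consequence of the preceding proposition (Erbacher codimension reduction when $|B|^2=m$) combined with Theorem~\ref{teo:h=cst-b3} applied to the resulting CMC proper biharmonic hypersurface with $|H|=1$, which is precisely what you do. The converse via the umbilical shape operator of $\mathbb{S}^m(1/\sqrt 2)$ and invariance of $|B|$ and $|H|$ under the totally geodesic inclusion is also the standard (and correct) bookkeeping.
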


The case when $M$ is a surface is more rigid. Using the classification of PMC surfaces in  $\mathbb{S}^{n}$ given by S.-T.~Yau \cite{Y74}, and \cite[Corollary~5.5]{BMO08}, we obtain the following result.

\begin{theorem}[\cite{BMO08}]\label{th: bih_PMC_surf}
Let $\varphi:M^2\to\mathbb{S}^n$ be a PMC proper biharmonic surface. Then $\varphi$ induces a minimal immersion of $M$ into a small hypersphere $\mathbb{S}^{n-1}(1/\sqrt{2})\subset\mathbb{S}^n$.
\end{theorem}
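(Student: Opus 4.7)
The plan is to reduce the problem to codimension one by invoking S.-T.~Yau's classification of PMC surfaces in space forms from \cite{Y74}, and then to apply the $m=2$ rigidity result for proper biharmonic hypersurfaces in $\mathbb{S}^3$ that has already been recorded in the excerpt.

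Since $\varphi$ is proper biharmonic, it cannot be minimal, so Yau's classification of non-minimal PMC surfaces in $\mathbb{S}^n$ leaves two alternatives: (a) $\varphi(M)$ is contained in some totally geodesic $\mathbb{S}^3\subset\mathbb{S}^n$, and $\varphi$ is a CMC surface in $\mathbb{S}^3$; or (b) $\varphi=\imath\circ\psi$, where $\psi:M\to\mathbb{S}^{n-1}(r)$ is a minimal immersion into a small hypersphere and $\imath:\mathbb{S}^{n-1}(r)\hookrightarrow\mathbb{S}^n$ is the totally umbilical inclusion. I handle these two cases separately.

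In case (a), viewed as a map into $\mathbb{S}^3$, $\varphi$ is a CMC proper biharmonic hypersurface with $m=2$, hence by the $m=2$ part of the low-dimensional classification theorem stated earlier, $\varphi(M)$ is an open part of $\mathbb{S}^2(1/\sqrt 2)\subset\mathbb{S}^3$. To present this in the form required by the conclusion, choose coordinates on $\mathbb{R}^{n+1}$ so that $\mathbb{S}^3=\mathbb{S}^n\cap(\mathbb{R}^4\times\{0\})$ and $\mathbb{S}^2(1/\sqrt 2)=\{x_4=1/\sqrt 2\}\cap\mathbb{S}^3$; then the small hypersphere $\mathbb{S}^{n-1}(1/\sqrt 2)=\{x_4=1/\sqrt 2\}\cap\mathbb{S}^n$ contains $\mathbb{S}^2(1/\sqrt 2)$, and because both subspheres have the same radius $1/\sqrt 2$, the inclusion $\mathbb{S}^2(1/\sqrt 2)\hookrightarrow\mathbb{S}^{n-1}(1/\sqrt 2)$ is totally geodesic. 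Composing the (trivially minimal) local diffeomorphism $\varphi:M\to\mathbb{S}^2(1/\sqrt 2)$ with this totally geodesic embedding produces the sought minimal immersion into $\mathbb{S}^{n-1}(1/\sqrt 2)$.

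In case (b), only the radius $r$ needs to be pinned down. Since $\imath$ is totally umbilical with constant principal curvature $\lambda=\sqrt{1-r^2}/r$ and unit normal $\eta$ parallel along $\imath$, and since $\psi$ is minimal in $\mathbb{S}^{n-1}(r)$, the mean curvature vector of $\varphi$ in $\mathbb{S}^n$ is $H=\lambda\eta$, so $\nabla^\perp H=0$ and $A_H=\lambda^2\Id$. Substituting into the PMC biharmonicity characterization of Corollary~\ref{th: caract_bih_pmc}, namely $\trace B(A_H(\cdot),\cdot)=mH$, one gets $\lambda^2\trace B=mH$, and since $\trace B=mH=m\lambda\eta$ already, this reduces to $\lambda^3=\lambda$, forcing $\lambda=1$ and hence $r=1/\sqrt 2$. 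Thus $\psi$ is already a minimal immersion into $\mathbb{S}^{n-1}(1/\sqrt 2)$, as required.

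The main obstacle is the bookkeeping in case (a): one must recognize that the codimension-one rigidity result in $\mathbb{S}^3$ nevertheless yields a minimal factorization through a small hypersphere of the full ambient $\mathbb{S}^n$, which hinges on the observation that two equal-radius subspheres embed totally geodesically one inside the other. Case (b) is essentially a one-line algebraic consequence of the umbilical geometry of $\imath$ once the PMC biharmonic equation is applied.
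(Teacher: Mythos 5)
Your overall strategy---reduce via Yau's classification of PMC surfaces and then invoke the known rigidity results---is the same as the paper's, which deduces the theorem from \cite{Y74} together with a corollary of \cite{BMO08}; and your case (b) is handled correctly: $\trace B(A_H(\cdot),\cdot)=\lambda^2\trace B=m\lambda^3\eta$ forces $\lambda=1$, i.e.\ $r=1/\sqrt2$. The gap is in case (a): you have mis-stated Yau's dichotomy. Yau's theorem asserts that a non-minimal PMC surface either is a minimal surface of a totally \emph{umbilical} hypersurface, or lies with constant mean curvature in a $3$-dimensional totally \emph{umbilical} submanifold of $\mathbb{S}^n$---not necessarily a totally geodesic $\mathbb{S}^3$. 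Such a $3$-dimensional umbilical submanifold may be a small sphere $\mathbb{S}^3(r)\subset\mathbb{S}^4\subset\mathbb{S}^n$ with $r<1$, and this sub-case is not vacuous: a non-minimal torus $\mathbb{S}^1(a)\times\mathbb{S}^1(b)\subset\mathbb{S}^3(\sqrt{a^2+b^2})$ with $a^2+b^2<1$ and $a\neq b$ is PMC in $\mathbb{S}^4$, yet it is neither contained in a totally geodesic $\mathbb{S}^3$ nor minimal in any umbilical hypersurface. Your two cases therefore do not exhaust Yau's alternatives, and as written the proof never rules out proper biharmonic surfaces of this third kind.

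The missing sub-case can be closed, but it requires an argument. Reduce to $n=4$ (biharmonicity passes through the totally geodesic $\mathbb{S}^4$), and take the parallel orthonormal normal frame $\{\eta_1,\eta_2\}$ of $M$ in $\mathbb{S}^4$, with $\eta_1$ normal to $M$ in $\mathbb{S}^3(r)$ and $\eta_2$ the umbilical direction of $\mathbb{S}^3(r)$, so that $H=\alpha\eta_1+\lambda\eta_2$, where $\alpha$ is the constant mean curvature of $M$ in $\mathbb{S}^3(r)$, $\lambda=\sqrt{1-r^2}/r$ and $A_{\eta_2}=\lambda\Id$. Applying the first equation of \eqref{eq: caract_bih_Hparallel_II} to $\xi=-\lambda\eta_1+\alpha\eta_2$ yields $\alpha\lambda\bigl(|A_{\eta_1}|^2-2\alpha^2\bigr)=0$, and since $|A_{\eta_1}|^2\geq 2\alpha^2$ with equality only in the umbilical case, one of three things happens: $\lambda=0$ (your case (a)); $\alpha=0$, i.e.\ $M$ is minimal in the umbilical hypersurface $\mathbb{S}^3(r)$ of $\mathbb{S}^4$, which your case (b) computation then disposes of; or $M$ is totally umbilical, hence pseudo-umbilical, in $\mathbb{S}^4$, in which case the second equation of \eqref{eq: caract_bih_Hparallel_II} gives $|H|=1$ and $\varphi(M)$ is an open part of $\mathbb{S}^2(1/\sqrt2)$. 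Only with some such supplement is the case analysis complete.
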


\begin{remark}
If $n=4$ in Theorem~\ref{th: bih_PMC_surf}, then the same conclusion holds under the weakened assumption that the surface is CMC as it was shown in \cite{BO09}.
\end{remark}
In the higher dimensional case we have the following bounds for the value of the mean curvature of a
PMC proper biharmonic  immersion.

\begin{theorem}[\cite{BO12}]\label{th: pmc1}
Let $\varphi:M^m\to\mathbb{S}^n$ be a PMC proper biharmonic  immersion. Assume that $m>2$ and $|H|\in (0,1)$.
Then $|H|\in (0,({m-2})/{m}]$, and $|H|=({m-2})/{m}$ if and only
if locally $\varphi(M)$ is an open part of a standard product
$$
M_1\times\mathbb{S}^1(1/\sqrt{2})\subset\mathbb{S}^n,
$$
where $M_1$ is a minimal embedded submanifold of $\mathbb{S}^{n-2}(1/\sqrt{2})$. Moreover, if $M$ is
complete, then the above decomposition of $\varphi(M)$ holds globally, where $M_1$ is a complete minimal submanifold of $\mathbb{S}^{n-2}(1/\sqrt{2})$.
\end{theorem}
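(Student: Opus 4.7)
The strategy is to mimic the Simons--Okumura argument that yields the analogous bound for CMC biharmonic hypersurfaces, after observing that the PMC biharmonic equations make the shape operator in the direction of $H$ behave like that of a CMC biharmonic hypersurface. Since $|H|>0$ is constant and $\nabla^\perp H=0$, the section $\eta:=H/|H|$ is a globally defined parallel unit normal. Setting $A:=A_\eta$, Corollary~\ref{th: caract_bih_pmc} yields $\trace A=m|H|$, $|A|^2=m$, and the orthogonality $\langle A,A_\xi\rangle=0$ for every $\xi\in C(NM)$ with $\xi\perp H$. I would introduce the traceless part $\Phi:=A-|H|\Id$; it satisfies $\trace\Phi=0$ and $|\Phi|^2=m(1-|H|^2)$, which is constant.

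Because $\eta$ is parallel, the Codazzi equation forces $A$ to be a symmetric Codazzi tensor with constant trace, and I would derive a Simons-type identity by combining the Bochner formula $\tfrac12\Delta|\Phi|^2=|\nabla\Phi|^2+\langle\Phi,\Delta\Phi\rangle$ with a computation of $\Delta A$ (via the Ricci commutation identity, Lemma~\ref{lem: nablaA} and the Gauss equation in $\mathbb{S}^n$), in the same spirit as the proof of Theorem~\ref{th: jchen2} but now in higher codimension. The identity should take the form
\[
\tfrac12\Delta|\Phi|^2 = |\nabla\Phi|^2 + |\Phi|^2\bigl(m(1+|H|^2)-|\Phi|^2\bigr) + m|H|\,\trace\Phi^3 + \mathcal R,
\]
where the remainder $\mathcal R$ gathers the contributions of the shape operators $A_\xi$ with $\xi\perp H$; the orthogonality $\langle A,A_\xi\rangle=0$ together with the parallelism of $H$ should render $\mathcal R\ge 0$.

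Since $|\Phi|^2$ is constant, the left-hand side vanishes, and Okumura's Lemma $|\trace\Phi^3|\le\frac{m-2}{\sqrt{m(m-1)}}|\Phi|^3$, substituted together with $|\Phi|^2=m(1-|H|^2)$, collapses the identity into
\[
0 \ge |\nabla\Phi|^2 + m|H|\,|\Phi|^2\Bigl(2|H|-\frac{m-2}{\sqrt{m-1}}\sqrt{1-|H|^2}\Bigr) + \mathcal R.
\]
Because $|H|\in(0,1)$ forces $|\Phi|^2>0$, the bracket must be non-positive; using the algebraic identity $4(m-1)+(m-2)^2=m^2$ this simplifies to $m^2|H|^2\le(m-2)^2$, i.e.\ $|H|\le(m-2)/m$.

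For the equality case $|H|=(m-2)/m$, every inequality above saturates: $\nabla\Phi=0$, $\mathcal R=0$, and equality in Okumura pins the eigenvalues of $A$ to $+1$ with multiplicity $m-1$ and $-1$ with multiplicity $1$. Parallelism of $A$ makes both eigenspace distributions parallel, so by the local de~Rham theorem $M$ splits as a Riemannian product of an $(m-1)$-dimensional factor $M_1$ and a one-dimensional factor. Analysing the second fundamental form of each factor under the Gauss equation and using the vanishing of $\mathcal R$ identifies the one-dimensional factor with $\mathbb{S}^1(1/\sqrt 2)$ and forces $M_1$ to sit minimally in $\mathbb{S}^{n-2}(1/\sqrt 2)\subset\mathbb{S}^n$, giving the claimed local decomposition $M_1\times\mathbb{S}^1(1/\sqrt 2)\subset\mathbb{S}^n$. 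Under completeness, the local splitting extends globally by a standard monodromy argument. The main obstacles are producing the Simons-type identity with the correct sign of the remainder $\mathcal R$ in the non-hypersurface setting, and converting the parallel-eigendistribution datum into the explicit embedding into the claimed standard product of spheres.
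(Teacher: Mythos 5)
Note first that the paper states this theorem with a citation to \cite{BO12} and does not reproduce its proof, so there is no in-text argument to compare against; I assess your outline on its own terms. Your route --- parallel unit normal $\eta=H/|H|$, the Codazzi tensor $A=A_\eta$ with $\trace A=m|H|$ and $|A|^2=m$, a Simons-type identity for the constant quantity $|\Phi|^2=m(1-|H|^2)$, then Okumura's lemma --- is the right one and does close; the numerology in your final display is correct, since $4(m-1)+(m-2)^2=m^2$. The one step you leave as an assertion, the sign of $\mathcal R$, is the crux, and the orthogonality $\langle A_H,A_\xi\rangle=0$ alone does \emph{not} give it: writing $\mathcal R=\tfrac12\sum_{i,j}(\lambda_i-\lambda_j)^2\sum_{\xi}\bigl(h^\xi_{ii}h^\xi_{jj}-(h^\xi_{ij})^2\bigr)$ in an eigenbasis of $A$, the terms $-(\lambda_i-\lambda_j)^2(h^\xi_{ij})^2$ are non-positive, i.e.\ of the wrong sign. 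What saves the argument is the Ricci equation: since $\nabla^\perp H=0$ we have $R^\perp(\cdot,\cdot)H=0$, hence $[A_H,A_\xi]=0$ for every normal section $\xi$, which reads $(\lambda_i-\lambda_j)h^\xi_{ij}=0$ and kills those terms identically; the remaining part equals $\sum_\xi\bigl((\trace A_\xi)(\trace A^2A_\xi)-(\trace (AA_\xi))^2\bigr)=0$ because $\trace A_\xi=m\langle H,\xi\rangle=0$ for $\xi\perp H$ and $\langle A,A_\xi\rangle=0$ by Corollary~\ref{th: caract_bih_pmc}. So in fact $\mathcal R=0$, not merely $\mathcal R\ge 0$, and you should state and use $[A_H,A_\xi]=0$ explicitly. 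The same commutation is what you need (and do not mention) in the equality case: it shows $B(X,Y)=0$ whenever $X$ and $Y$ lie in the two distinct eigendistributions of $A$ (eigenvalues $+1$ with multiplicity $m-1$ and $-1$ with multiplicity $1$, as you correctly compute), so that Moore's splitting lemma, and not just the intrinsic de~Rham decomposition, applies and yields the extrinsic product $M_1\times\mathbb{S}^1(1/\sqrt2)\subset\mathbb{S}^{n-2}(1/\sqrt2)\times\mathbb{S}^1(1/\sqrt2)\subset\mathbb{S}^n$. With these two points supplied, your proof is complete and is essentially the argument of \cite{BO12}.
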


\begin{remark}
The same result of Theorem~\ref{th: pmc1} was proved, independently, in \cite{WW12}.
\end{remark}
If we assume that $M$ is compact and $|B|$ is bounded we obtain the following theorem.

\begin{theorem}\label{th: pmc-santos}
Let $\varphi:M^m\to\mathbb{S}^{m+d}$ be a compact PMC proper biharmonic  immersion with $m\geq 2$, $d\geq 2$ and
$$
m<|B|^2\leq m \frac{d-1}{2d-3}\left(1+\frac{3d-4}{d-1}|H|^2-\frac{m-2}{\sqrt{m-1}}|H| \sqrt{1-|H|^2}\right).
$$
\begin{itemize}
\item[(i)] If $m=2$, then $|H|=1$, and  either $d=2$, $|B|^2=6$, $\varphi(M^2)=\mathbb{S}^{1}(1/{2})\times\mathbb{S}^{1}(1/{2})\subset\mathbb{S}^{3}(1/\sqrt{2})$ or $d=3$, $|B|^2=14/3$, $\varphi(M^2)$ is the Veronese minimal surface in $\mathbb{S}^{3}(1/\sqrt{2})$.

\item[(ii)] If $m>2$, then $|H|=1$, $d=2$, $|B|^2=3m$ and
$$
\varphi(M^m)=\mathbb{S}^{m_1}\left(\sqrt{{m_1}/{(2m)}}\right)\times \mathbb{S}^{m_2}\left(\sqrt{{m_2}/{(2m)}}\right)\subset \mathbb{S}^{m+1}(1/\sqrt{2}),
$$
where $m_1+m_2=m$, $m_1\geq 1$ and $m_2\geq 1$.
\end{itemize}
\end{theorem}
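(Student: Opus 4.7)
The plan is to apply the pinching theorem of W.~Santos for compact PMC submanifolds of $\mathbb{S}^{m+d}$---whose hypothesis is precisely the upper bound on $|B|^2$ assumed here---and then to combine its conclusion with the biharmonicity consequences recorded in Theorem~\ref{teo:h=cst-b3} and Theorem~\ref{th: pmc1}. Santos's theorem provides a gap dichotomy: under our pinching bound, either $\varphi(M)$ is totally umbilical (so $|B|^2=m|H|^2$), or equality holds in the bound and $\varphi(M)$ is, up to a rigid motion of $\mathbb{S}^{m+d}$, one of the following equality examples: a Clifford product $\mathbb{S}^{m_1}(r_1)\times\mathbb{S}^{m_2}(r_2)$ lying inside a totally umbilical hypersphere $\mathbb{S}^{m+1}\subset\mathbb{S}^{m+d}$, or---only for $m=2$---the Veronese surface inside a totally umbilical $\mathbb{S}^4\subset\mathbb{S}^{m+d}$. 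Theorem~\ref{teo:h=cst-b3} forces $|H|\in(0,1]$, so the totally umbilical alternative yields $|B|^2=m|H|^2\leq m$, contradicting $|B|^2>m$; hence $\varphi(M)$ is one of Santos's equality examples.

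Next I would show $|H|=1$. If instead $|H|\in(0,1)$, then Theorem~\ref{th: pmc1} requires $m>2$ (ruling out the Veronese alternative) and $|H|\leq(m-2)/m$, with equality characterised by a local product $M_1\times\mathbb{S}^1(1/\sqrt2)$. Matching this product with Santos's Clifford example forces $m_2=1$ and $r_2=1/\sqrt2$, while minimality of $M_1=\mathbb{S}^{m-1}(r_1)$ inside $\mathbb{S}^{m+d-2}(1/\sqrt2)$ together with Santos's ambient constraint $r_1^2+r_2^2=1$ yields $r_1=1/\sqrt2$; a direct principal-curvature computation on $\mathbb{S}^{m-1}(1/\sqrt2)\times\mathbb{S}^1(1/\sqrt2)\subset\mathbb{S}^{m+1}$ then gives $|B_\varphi|^2=m$, contradicting the strict inequality $|B|^2>m$. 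For $|H|\in(0,(m-2)/m)$, imposing the PMC biharmonic relation $|A_H|^2=m|H|^2$ of Corollary~\ref{th: caract_bih_pmc} on a Santos Clifford example likewise forces the radii to be either equal (giving $|H|=|m_1-m_2|/m$ and $|B_\varphi|^2=m$, again contradicting the lower bound) or the minimal ones (giving $|H|=0$), so no $|H|\in(0,1)$ survives. Hence $|H|=1$.

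With $|H|=1$, Theorem~\ref{teo:h=cst-b3} yields the B3 decomposition $\varphi=\imath\circ\phi$ with $\phi:M\to\mathbb{S}^{m+d-1}(1/\sqrt2)$ a minimal immersion. The inclusion $\imath$ is totally umbilical with principal curvatures identically $1$, so $B_\varphi(X,Y)=B_\phi(X,Y)+\langle X,Y\rangle\eta$ with $B_\phi\perp\eta$, giving $|B_\varphi|^2=|B_\phi|^2+m$; a short algebraic check shows the hypothesis bound $|B_\varphi|^2\leq C(m,d,1)=m(4d-5)/(2d-3)$ translates to Santos's minimal pinching bound $|B_\phi|^2\leq 2m(d-1)/(2d-3)$ on the compact minimal $\phi\subset\mathbb{S}^{m+d-1}(1/\sqrt2)$. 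Santos's classification then forces $\phi$ to be totally geodesic (excluded since $|B_\varphi|^2>m$ gives $|B_\phi|^2>0$) or an equality example: a minimal Clifford hypersurface $\mathbb{S}^{m_1}(\sqrt{m_1/(2m)})\times\mathbb{S}^{m_2}(\sqrt{m_2/(2m)})\subset\mathbb{S}^{m+1}(1/\sqrt2)$ (forcing $d=2$, $|B_\phi|^2=2m$, $|B_\varphi|^2=3m$), or, for $m=2$, the Veronese surface in $\mathbb{S}^4(1/\sqrt2)$ (forcing $d=3$, $|B_\phi|^2=8/3$, $|B_\varphi|^2=14/3$); the $m=2$, $d=2$ Clifford case specialises to $\mathbb{S}^1(1/2)\times\mathbb{S}^1(1/2)$ with $|B_\varphi|^2=6$. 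This assembles the conclusions~(i) and~(ii). The main obstacle is the second paragraph: one must carefully match Santos's Clifford equality examples against both the product structure of Theorem~\ref{th: pmc1} and the PMC biharmonic relation $|A_H|^2=m|H|^2$, and verify arithmetically that every matching configuration either has $|B_\varphi|^2=m$ (violating the strict lower bound) or corresponds to $|H|=1$.
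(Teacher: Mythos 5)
Your proposal follows the same route as the paper: the paper's entire proof of this theorem is the single remark that the result follows from the classification of compact PMC immersions with pinched $|B|^2$ in Theorem~1.6 of Santos \cite{S94}, and your argument is a detailed unwinding of exactly that citation (Santos's umbilical/equality dichotomy, elimination of the umbilical branch since $|H|\le 1$ would force $|B|^2\le m$, and identification of the surviving equality examples as the {\bf B3} Clifford and Veronese cases). The supporting computations you supply --- in particular $|B_\varphi|^2=|B_\phi|^2+m$ and the translation of the pinching bound at $|H|=1$ into the bound $2m(d-1)/(2d-3)$ for the induced minimal immersion into $\mathbb{S}^{m+d-1}(1/\sqrt2)$, yielding $|B|^2=3m$, $6$ and $14/3$ in the respective cases --- are consistent with the stated conclusions.
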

\begin{proof}
The result follows from the classification of compact PMC immersions with bounded  $|B|^2$ given in
Theorem~1.6 of \cite{S94}.
\end{proof}

\begin{theorem}[\cite{BO12}]\label{th: pmc2}
Let $\varphi:M^m\to\mathbb{S}^n$ be a PMC proper biharmonic  immersion with $\nabla A_H=0$. Assume that $|H|\in (0,({m-2})/{m})$.
Then, $m>4$ and, locally,
$$
\varphi(M)=M^{m_1}_1\times M^{m_2}_2
\subset\mathbb{S}^{n_1}(1/\sqrt{2})\times\mathbb{S}^{n_2}(1/\sqrt{2})\subset\mathbb{S}^n,
$$
where $M_i$ is a minimal embedded submanifold of $\mathbb{S}^{n_i}(1/\sqrt{2})$, $m_i\geq 2$,
$i=1,2$, $m_1+m_2=m$, $m_1\neq m_2$, $n_1+n_2=n-1$. In this case $|H|={|m_1-m_2|}/{m}$.
Moreover, if $M$ is complete, then the above decomposition of $\varphi(M)$ holds globally, where $M_i$ is a complete minimal submanifold of $\mathbb{S}^{n_i}(1/\sqrt{2})$, $i=1,2$.

\end{theorem}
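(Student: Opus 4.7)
The plan is to exploit $\nabla A_H=0$ to obtain a de Rham splitting of $M$, then use the algebraic biharmonic conditions from Corollary~\ref{th: caract_bih_pmc} to identify the factors, and finally apply a sphere-valued Moore-type lemma to recognize $\varphi$ as a canonical inclusion of a product. Since $A_H$ is a parallel symmetric endomorphism of $TM$, its eigenvalues $\mu_1,\ldots,\mu_\ell$ are constant on $M$ and the corresponding eigendistributions $E_1,\ldots,E_\ell$ are smooth parallel distributions of constant rank $m_1,\ldots,m_\ell$, with $\sum_\alpha m_\alpha=m$. By de Rham, $M$ is locally isometric to a Riemannian product $N_1\times\cdots\times N_\ell$ with $TN_i=E_i$.

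Next I would pin down $\ell$ and the eigenvalues. From Corollary~\ref{th: caract_bih_pmc}(ii) and the identity $\trace A_H=m|H|^2$,
\[
\sum_\alpha m_\alpha \mu_\alpha \;=\; \sum_\alpha m_\alpha \mu_\alpha^2 \;=\; m|H|^2,
\]
so $\{\mu_\alpha\}$ has weighted variance $m|H|^2(1-|H|^2)>0$; hence $\ell\geq 2$, because $\ell=1$ would force $A_H=|H|^2\Id$, i.e.\ pseudo-umbilicality, and Theorem~\ref{teo:h=cst-b3} would then give $|H|=1$, against $|H|<(m-2)/m$. To upgrade to $\ell=2$ with $\mu_1=|H|$ and $\mu_2=-|H|$, the key step is to show that $B$ vanishes on mixed arguments $X_\alpha\in E_\alpha$, $X_\beta\in E_\beta$, $\alpha\neq\beta$. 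The component along $H$ is automatic since $\langle B(X_\alpha,X_\beta),H\rangle=\mu_\alpha\langle X_\alpha,X_\beta\rangle=0$; for components along $\xi\perp H$, I would combine the Codazzi identity in $\mathbb{S}^n$ (using parallelism of each $E_\alpha$, so that mixed covariant derivatives stay in the right factor) with the off-diagonal biharmonic condition $\langle A_H,A_\xi\rangle=0$ to deduce the vanishing. Feeding the block-diagonal $B$ into the Gauss equation on each factor and the trace identities then forces $\ell=2$ and $\mu_1=-\mu_2=|H|$, whence $|H|=|m_1-m_2|/m$ with $m_1\neq m_2$.

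With $M=N_1\times N_2$ locally and $B$ vanishing on $E_1\otimes E_2$, the sphere version of Moore's decomposition lemma identifies $\varphi$ with $\imath\circ(\varphi_1\times\varphi_2)$, where $\varphi_i:N_i\to\mathbb{S}^{n_i}(r_i)$ and $\imath:\mathbb{S}^{n_1}(r_1)\times\mathbb{S}^{n_2}(r_2)\hookrightarrow\mathbb{S}^n$, $r_1^2+r_2^2=1$. A direct mean-curvature computation for this product immersion, together with $\mu_1=-\mu_2=|H|$, pins down $r_1=r_2=1/\sqrt 2$ and forces each $\varphi_i$ to be minimal in $\mathbb{S}^{n_i}(1/\sqrt 2)$. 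The hypothesis $|H|=|m_1-m_2|/m<(m-2)/m$ gives $|m_1-m_2|<m-2$, so $m_1,m_2\geq 2$ and $m=m_1+m_2\geq 5>4$. For complete $M$, the splitting extends globally by a standard analytic-continuation/holonomy argument applied to the parallel distributions $E_1,E_2$.

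The main obstacle I anticipate is the block-diagonalization step, i.e.\ proving $B(X_\alpha,X_\beta)=0$ for $X_\alpha\in E_\alpha$, $X_\beta\in E_\beta$, $\alpha\neq\beta$. The algebraic trace identities alone permit $\ell\geq 3$, and even when $\ell=2$ they admit a one-parameter family of eigenvalue pairs; only after establishing the vanishing of $B$ on mixed arguments (via a careful interplay of Codazzi, $\nabla A_H=0$, and $\langle A_H,A_\xi\rangle=0$) do the combined Gauss and trace relations single out $\mu_1=-\mu_2=|H|$ and $\ell=2$.
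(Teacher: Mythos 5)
First, note that the paper does not actually prove Theorem~\ref{th: pmc2}; it is quoted from \cite{BO12}, so your proposal has to be judged on its own merits. Its skeleton --- $\nabla A_H=0$ gives constant eigenvalues $\mu_\alpha$ and parallel eigendistributions $E_\alpha$, hence a local de~Rham splitting; the trace identities $\trace A_H=|A_H|^2=m|H|^2$ exclude $\ell=1$; Moore's lemma converts an extrinsic block-diagonalization of $B$ into a product immersion; and the radius/minimality/dimension bookkeeping at the end --- is the right shape and is consistent with the strategy of \cite{BO12}. The arithmetic you do carry out ($\ell\geq 2$, $|m_1-m_2|<m-2\Rightarrow m_i\geq 2$ and $m>4$) is correct.

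The problem is that the step you yourself flag as ``the main obstacle'' is the entire content of the theorem, and the two tools you name cannot deliver it. You need $\langle B(X_\alpha,X_\beta),\xi\rangle=0$ for $X_\alpha\in E_\alpha$, $X_\beta\in E_\beta$, $\alpha\neq\beta$ and every $\xi\perp H$, together with the identification of the spectrum of $A_H$ as $\{\pm|H|\}$ with exactly two eigenvalues. But $\langle A_H,A_\xi\rangle=0$, computed in an eigenbasis of $A_H$, reads $\sum_\alpha\mu_\alpha\trace\bigl(A_\xi|_{E_\alpha}\bigr)=0$: it constrains only the block traces of $A_\xi$ and is completely blind to the off-diagonal blocks $\langle B(E_\alpha,E_\beta),\xi\rangle$ you are trying to kill. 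Likewise, Codazzi in $\mathbb{S}^n$ combined with $\nabla^\perp H=0$ and $\nabla A_H=0$ only tells you that the $H$-component of $\nabla^\perp B$ vanishes; it gives no control on the $\xi$-components of $B$ itself. What the intrinsic splitting does give for free is $R(X_\alpha,X_\beta)=0$, hence via Gauss $|B(X_\alpha,X_\beta)|^2=1+\langle B(X_\alpha,X_\alpha),B(X_\beta,X_\beta)\rangle$ for unit mixed vectors --- which, far from forcing the mixed terms to vanish, shows they must be balanced against the diagonal blocks; turning this into the stated conclusion requires a genuinely new estimate (in \cite{BO12} this is where the Okumura-type inequality on the traceless part of $A_H$ and the full second biharmonic identity enter). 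Similarly, ``feeding the block-diagonal $B$ into the Gauss equation\ldots forces $\ell=2$ and $\mu_1=-\mu_2=|H|$'' is asserted, not argued: for fixed multiplicities the two trace identities admit eigenvalue pairs other than $\pm|H|$ (e.g.\ $\mu_{1,2}=|H|^2\pm|H|\sqrt{1-|H|^2}$ when $m_1=m_2$), so something beyond linear algebra must exclude them. As it stands the proposal is a plausible plan whose decisive steps are missing.
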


\begin{corollary}
Let $\varphi:M^m\to\mathbb{S}^n$, $m\in\{3,4\}$, be a PMC proper biharmonic  immersion with $\nabla A_H=0$. Then $|H|\in \{({m-2})/{m},1\}$. Moreover, if $|H|=({m-2})/{m}$, then locally
$\varphi(M)$ is an open part of a standard product
$$
M_1\times\mathbb{S}^1(1/\sqrt{2})\subset\mathbb{S}^n,
$$
where $M_1$ is a minimal embedded submanifold of $\mathbb{S}^{n-2}(1/\sqrt{2})$,
and if $|H|=1$, then $\varphi$ induces a minimal immersion of $M$ into $\mathbb{S}^{n-1}(1/\sqrt 2)$.
\end{corollary}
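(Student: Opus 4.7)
The plan is to derive the corollary as a direct combination of three results already available in the excerpt: Theorem~\ref{teo:h=cst-b3}, Theorem~\ref{th: pmc1}, and Theorem~\ref{th: pmc2}. Since $\varphi$ is PMC it is, in particular, CMC, so the bound $|H|\in(0,1]$ from Theorem~\ref{teo:h=cst-b3} applies and splits the argument into the two cases $|H|=1$ and $|H|\in(0,1)$.

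If $|H|=1$, then Theorem~\ref{teo:h=cst-b3} yields at once that $\varphi$ induces a minimal immersion of $M$ into $\mathbb{S}^{n-1}(1/\sqrt 2)$, which is exactly the second conclusion of the statement. So the substantive work is in the case $|H|\in(0,1)$, where the goal is to pin down $|H|=(m-2)/m$.

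For this, I would first apply Theorem~\ref{th: pmc1}: since $m\in\{3,4\}$ satisfies $m>2$, the assumption $|H|\in(0,1)$ forces $|H|\in(0,(m-2)/m]$. To exclude the open interval, I would argue by contradiction: if $|H|\in(0,(m-2)/m)$, then the extra hypothesis $\nabla A_H=0$ together with Theorem~\ref{th: pmc2} would imply $m>4$, contradicting $m\in\{3,4\}$. Hence only the endpoint value $|H|=(m-2)/m$ is possible, and the description of $\varphi(M)$ as the (local) standard product $M_1\times\mathbb{S}^1(1/\sqrt 2)$ with $M_1$ minimal in $\mathbb{S}^{n-2}(1/\sqrt 2)$ follows from the second part of Theorem~\ref{th: pmc1}.

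Because the corollary is essentially a packaging of existing theorems, there is no serious analytic obstacle; the only point that deserves care is to observe that for $m\in\{3,4\}$ the three admissible ranges $|H|\in\{1\}$, $|H|=(m-2)/m$, and $|H|\in(0,(m-2)/m)$ provided by Theorem~\ref{teo:h=cst-b3} and Theorem~\ref{th: pmc1} are exhaustive, and that Theorem~\ref{th: pmc2} removes the last of these. No new computation is required, and the two geometric conclusions are then exactly the ones imported from Theorem~\ref{teo:h=cst-b3} and Theorem~\ref{th: pmc1}, respectively.
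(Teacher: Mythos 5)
Your proposal is correct and is precisely the intended derivation: the paper states this corollary immediately after Theorem~\ref{th: pmc2} without a written proof, because it follows exactly as you describe by combining Theorem~\ref{teo:h=cst-b3} (noting PMC implies CMC, so $|H|\in(0,1]$ and the $|H|=1$ case is settled), Theorem~\ref{th: pmc1} (giving $|H|\in(0,(m-2)/m]$ and the product structure at the endpoint), and Theorem~\ref{th: pmc2} (whose conclusion $m>4$ rules out $|H|\in(0,(m-2)/m)$ when $m\in\{3,4\}$). No gaps.
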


We should note that there exist examples of proper biharmonic submanifolds of $\mathbb{S}^{5}$ and
$\mathbb{S}^{7}$ which are not PMC but with $\nabla A_H=0$ (see \cite{S05} and \cite{FO12}).

\section{Parallel biharmonic  immersions in $\mathbb{S}^n$}

An immersed submanifold is said to be {\it parallel} if
its second fundamental form $B$ is parallel, that is $\nabla^\perp B=0$.

In the following we give the classification for proper biharmonic parallel immersed surfaces in $\mathbb{S}^n$.
\begin{theorem}\label{teo:parallel-surfaces}
Let $\varphi:M^2\to \mathbb{S}^n$ be a parallel surface in $\mathbb{S}^n$. If $\varphi$ is proper biharmonic, then the codimension can be reduced to $3$ and $\varphi(M)$ is an open part of either
\begin{itemize}
\item[{\rm(i)}] a totally umbilical sphere $\mathbb{S}^2(1/\sqrt2)$ lying in
a totally geodesic $\mathbb{S}^3\subset \mathbb{S}^5$,
or
\item[{\rm(ii)}] the minimal flat torus $\mathbb{S}^1(1/2)\times \mathbb{S}^1(1/2)\subset
\mathbb{S}^3(1/\sqrt2)$; $\varphi(M)$ lies in a totally geodesic $\mathbb{S}^4\subset \mathbb{S}^5$,
or
\item[{\rm(iii)}] the minimal Veronese surface in $\mathbb{S}^4(1/\sqrt2)\subset \mathbb{S}^5$.
\end{itemize}
\end{theorem}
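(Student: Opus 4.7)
The plan is to reduce the problem to the PMC surface case and then invoke the classical classification of parallel minimal surfaces in a round sphere. The main structural observation is that parallelism is a much stronger condition than PMC, so once Theorem~\ref{th: bih_PMC_surf} places us inside a totally geodesic small hypersphere as a minimal surface, we are reduced to a classical object.

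First I would note that any parallel immersion is automatically PMC: from $mH = \trace B$ and $\nabla^\perp B = 0$ one gets $\nabla^\perp H = 0$. Applying Theorem~\ref{th: bih_PMC_surf} then yields a factorization $\varphi = \imath \circ \tilde{\varphi}$, where $\imath:\mathbb{S}^{n-1}(1/\sqrt{2}) \hookrightarrow \mathbb{S}^n$ is the canonical inclusion of the small hypersphere and $\tilde{\varphi}:M^2 \to \mathbb{S}^{n-1}(1/\sqrt{2})$ is a minimal immersion.

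Second, I would verify that $\tilde{\varphi}$ is still a parallel immersion. Let $\nu$ be a unit normal to $\imath$ in $\mathbb{S}^n$. Since $\imath$ is totally umbilical with parallel mean curvature, the Gauss-type decomposition reads $B(X,Y) = \tilde{B}(X,Y) - \langle X,Y\rangle\nu$, where $\tilde B$ is the second fundamental form of $\tilde\varphi$ inside $\mathbb{S}^{n-1}(1/\sqrt 2)$. The $\nu$-component is parallel in the ambient normal bundle, and the normal connection of $\varphi$ splits accordingly along the subbundle tangent to $\mathbb{S}^{n-1}(1/\sqrt{2})$ plus the line $\mathbb{R}\nu$, so $\nabla^\perp B = 0$ forces $\tilde{\nabla}^\perp \tilde{B} = 0$.

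Third, I would invoke the classical classification of parallel (minimal) surfaces in a round sphere, due to Ferus and implicit in the list of $2$-dimensional symmetric R-spaces: up to congruence and rescaling, the only minimal parallel surfaces in a unit sphere are a totally geodesic $\mathbb{S}^2$, the equilateral Clifford torus $\mathbb{S}^1(1/\sqrt{2})\times \mathbb{S}^1(1/\sqrt{2}) \subset \mathbb{S}^3$, and the Veronese surface in $\mathbb{S}^4$. Rescaling from radius $1$ to radius $1/\sqrt{2}$ applied to $\tilde\varphi(M)\subset\mathbb{S}^{n-1}(1/\sqrt 2)$ gives exactly the three configurations (i)--(iii). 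Each model surface lies in a totally geodesic subsphere of dimension at most $4$ inside $\mathbb{S}^{n-1}(1/\sqrt{2})$, and together with $\nu$ this sits in a totally geodesic $\mathbb{S}^5 \subset \mathbb{S}^n$, which is the claimed codimension reduction.

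The main obstacle, in my estimation, is the bookkeeping in the third step: one must make sure the classical list of parallel minimal surfaces in a sphere is complete in this codimension and that the radii rescale correctly from $1$ to $1/\sqrt{2}$ so that the Clifford torus becomes $\mathbb{S}^1(1/2)\times\mathbb{S}^1(1/2)$. The preceding two steps (parallel $\Rightarrow$ PMC, and preservation of parallelism under the umbilical reduction) are routine.
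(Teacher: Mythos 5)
Your proof is correct, but it takes a genuinely different route from the paper's. The paper starts from Chen's classification of \emph{all} parallel surfaces in $\mathbb{S}^n$ (a totally umbilical $\mathbb{S}^2(r)$ in a totally geodesic $\mathbb{S}^3$; a two-parameter family of flat tori in a totally geodesic $\mathbb{S}^4$; a one-parameter family of constant-curvature surfaces in a totally geodesic $\mathbb{S}^5$) and then imposes biharmonicity on each family separately, using the explicit parametrizations together with the biharmonicity criteria for tori and for minimal surfaces of small hyperspheres to pin down the parameters ($r=1/\sqrt2$, $\sqrt{a^2+b^2}=1/2$, etc.). You instead first invoke Theorem~\ref{th: bih_PMC_surf} to place $M$ as a \emph{minimal} surface in $\mathbb{S}^{n-1}(1/\sqrt2)$, check (correctly --- the computation that $\nu$ is $\nabla^\perp$-parallel and that the umbilical term $\langle X,Y\rangle\nu$ contributes nothing to $\nabla^\perp B$ is exactly the same kind of argument the paper uses for products at the end of Section~5) that the induced immersion is still parallel, and then quote the much shorter rigid list of minimal parallel surfaces in a round sphere, which has no free parameters left to determine. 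The trade-off: your route front-loads the analytic work into the PMC theorem (itself resting on Yau's classification of PMC surfaces) and needs only the minimal parallel classification as geometric input, whereas the paper needs the full parallel classification but then only elementary case checks; both yield the same codimension reduction to a totally geodesic $\mathbb{S}^5$ and the same three models after the radius-$1/\sqrt2$ rescaling, which you have carried out correctly.
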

\begin{proof}
The proof relies on the fact that parallel submanifolds in $\mathbb{S}^n$ are classified in the following three categories (see, for example, \cite{C10}):
\begin{itemize}
\item[(a)] a totally umbilical sphere $\mathbb{S}^2(r)$  lying in a totally geodesic $\mathbb{S}^3\subset\mathbb{S}^n$;

\item[(b)] a flat torus lying in a totally geodesic $\mathbb{S}^4\subset\mathbb{S}^n$
defined by
$$
(0,\ldots,0,a\cos u,a\sin u,b\cos v,b\sin v,\sqrt{1-a^2 -b^2}),\quad a^2 +b^2 \leq1;
$$
\item[(c)] a surface of positive constant curvature lying in a totally geodesic $\mathbb{S}^5\subset\mathbb{S}^n$ defined by
$$
r\left(0,\ldots,0,\frac{v w}{\sqrt{3}},\frac{u w}{\sqrt{3}},\frac{u v}{\sqrt{3}},\frac{u^2-v^2}{2\sqrt{3}},
\frac{u^2+v^2-2w^2}{6},\frac{\sqrt{1-r^2}}{r}\right),
$$
with $u^2+v^2+w^2=3$ and $0<r\leq 1$.
\end{itemize}
In case (a) the biharmonicity implies directly (i). Requiring the immersion in (b) to be biharmonic and using
\cite[Corollary~5.5]{BMO08} we get that $\sqrt{a^2 +b^2}=1/2$ and then (ii) follows. The immersion in (c) induces a minimal immersion of the surface in the hypersphere $\mathbb{S}^4(r)\subset\mathbb{S}^5$. Then, applying \cite[Theorem~3.5]{CMO01}, the immersion in (c) reduces to that in (iii).
\end{proof}

In all  three  cases of Theorem~\ref{teo:parallel-surfaces}, $\varphi$ is of type {\bf B3} and thus its mean curvature is $1$. In the  higher dimensional case we know, from Theorem~\ref{teo:h=cst-b3}, that if $|H|=1$,
then $\varphi$ is of type {\bf B3}. Moreover, if we assume that $\varphi$ is also parallel, then the induced minimal immersion in $\mathbb{S}^{n-1}(1/\sqrt{2})$ is parallel as well.

If $\nabla^{\perp}B=0$, then $\nabla^{\perp}H=0$ and $\nabla A_H=0$. Therefore Theorem~\ref{th: pmc1} and Theorem~\ref{th: pmc2} hold also for parallel proper biharmonic  immersions in $\mathbb{S}^n$. From this and Theorem~\ref{teo:parallel-surfaces}, in order to classify all parallel proper biharmonic  immersions in $\mathbb{S}^{n}$, we are left with the case when $m>2$ and $|H|\in (0,1)$.

\begin{theorem}
Let $\varphi:M^m\to\mathbb{S}^n$ be a parallel proper biharmonic  immersion. Assume that $m>2$ and $|H|\in (0,1)$. Then $|H|\in (0,({m-2})/{m}]$. Moreover:
\begin{itemize}
\item[(i)] $|H|=({m-2})/{m}$ if and only
if locally $\varphi(M)$ is an open part of a standard product
$$
M_1\times\mathbb{S}^1(1/\sqrt{2})\subset\mathbb{S}^n,
$$
where $M_1$ is a parallel minimal embedded submanifold of $\mathbb{S}^{n-2}(1/\sqrt{2})$;

\item[(ii)] $|H|\in (0,({m-2})/{m})$ if and only if $m>4$ and, locally,
$$
\varphi(M)=M^{m_1}_1\times M^{m_2}_2
\subset\mathbb{S}^{n_1}(1/\sqrt{2})\times\mathbb{S}^{n_2}(1/\sqrt{2})\subset\mathbb{S}^n,
$$
where $M_i$ is a parallel minimal embedded submanifold of $\mathbb{S}^{n_i}(1/\sqrt{2})$, $m_i\geq 2$,
$i=1,2$, $m_1+m_2=m$, $m_1\neq m_2$, $n_1+n_2=n-1$.
\end{itemize}
\end{theorem}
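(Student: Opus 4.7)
\noindent The plan is to reduce to Theorems~\ref{th: pmc1} and~\ref{th: pmc2} and then promote the factors in the resulting product decomposition from merely minimal to parallel minimal. First, the parallel hypothesis $\nabla^\perp B=0$ forces $\nabla^\perp H=\frac{1}{m}\nabla^\perp(\trace B)=0$, so $\varphi$ is PMC. Combining this with $\langle A_H X,Y\rangle=\langle B(X,Y),H\rangle$ and $\nabla^\perp B=0$, a direct computation gives $\nabla A_H=0$. Theorem~\ref{th: pmc1} then yields the bound $|H|\in(0,(m-2)/m]$ together with, in the equality case, the local decomposition $\varphi(M)\subset M_1\times\mathbb{S}^1(1/\sqrt 2)$ with $M_1$ minimal embedded in $\mathbb{S}^{n-2}(1/\sqrt 2)$. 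Theorem~\ref{th: pmc2}, applicable thanks to $\nabla A_H=0$, handles the range $|H|\in(0,(m-2)/m)$ and produces the stated local product decomposition with $m>4$ and $M_i$ minimal embedded in $\mathbb{S}^{n_i}(1/\sqrt 2)$.

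\noindent The remaining task is to show that each factor $M_i$ is parallel in its host half-radius sphere. I write $\varphi=\iota\circ j$, where $j:M_1\times M_2\hookrightarrow\mathbb{S}^{n_1}(1/\sqrt 2)\times\mathbb{S}^{n_2}(1/\sqrt 2)$ is the product embedding and $\iota$ is the standard inclusion of the product of half-radius spheres into $\mathbb{S}^n$; the latter is itself a parallel submanifold of $\mathbb{S}^n$. The composition formula for second fundamental forms reads
\begin{equation*}
B_\varphi(X,Y)=d\iota\bigl(\tilde B(X,Y)\bigr)+B_\iota(dj(X),dj(Y)),
\end{equation*}
with the two summands living in orthogonal sub-bundles of $NM$. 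Since $\nabla^\perp B_\iota=0$ and the normal connection of $\iota$ respects this splitting, parallelness of $B_\varphi$ is equivalent to parallelness of $\tilde B$. Finally, $\tilde B$ splits along the product as $\tilde B((X_1,X_2),(Y_1,Y_2))=(B_1(X_1,Y_1),B_2(X_2,Y_2))$, so $\nabla^\perp\tilde B=0$ is equivalent to $\nabla^\perp B_i=0$ for $i=1,2$. Case~(i) is the subcase $M_2=\mathbb{S}^1(1/\sqrt 2)$, where $B_2\equiv 0$ is trivially parallel.

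\noindent The converse follows by running this chain in reverse: the product of parallel minimal submanifolds of $\mathbb{S}^{n_i}(1/\sqrt 2)$ includes via $\iota$ as a parallel immersion in $\mathbb{S}^n$, and minimality places it in class \textbf{B3} for case~(i) or \textbf{B4} for case~(ii), which guarantees proper biharmonicity and the stated value of $|H|$. The main obstacle I anticipate is the careful formalization of the normal-bundle splitting induced by $\iota$ and $j$ and the verification that the normal connection of $\iota$ acts trivially along $M$; once this bookkeeping is done, the theorem follows mechanically from Theorems~\ref{th: pmc1} and~\ref{th: pmc2}.
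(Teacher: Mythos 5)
Your proposal follows essentially the same route as the paper: reduce to Theorems~\ref{th: pmc1} and~\ref{th: pmc2} via $\nabla^\perp B=0\Rightarrow\nabla^\perp H=0$ and $\nabla A_H=0$, then show that parallelness of $B$ for $M_1\times M_2$ in $\mathbb{S}^n$ is equivalent to parallelness of each $B^i$ in $\mathbb{S}^{n_i}(1/\sqrt2)$, which is exactly the displayed identity in the paper's proof (your splitting via $\varphi=\iota\circ j$ and the parallelness of $\iota$ is just a slightly more structured way of verifying it). One small correction in your converse: the product in case~(i), $M_1\times\mathbb{S}^1(1/\sqrt2)$, is of class \textbf{B4} with $m_2=1$ (giving $|H|=(m-2)/m$), not of class \textbf{B3}, which would force $|H|=1$.
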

\begin{proof} We only have to prove that $M_i$ is a parallel minimal submanifold of $\mathbb{S}^{n_i}(1/\sqrt{2})$, $m_i\geq 2$. For this, denote by $B^i$ the second fundamental form of $M_i$ in $\mathbb{S}^{n_i}(1/\sqrt{2})$, $i=1,2$. If $B$ denotes the second fundamental form of $M_1\times M_2$ in $\mathbb{S}^n$, it is easy to verify, using the expression of the second fundamental form of  $\mathbb{S}^{n_1}(1/\sqrt{2})\times\mathbb{S}^{n_2}(1/\sqrt{2})$  in $\mathbb{S}^n$, that
$$
(\nabla^\perp_{(X_1,X_2)} B)((Y_1,Y_2),(Z_1,Z_2))=((\nabla^\perp_{X_1} B^1)(Y_1,Z_1),(\nabla^\perp_{X_2} B^2)(Y_2,Z_2)),
$$
for all $X_1,Y_1,Z_1\in C(TM_1)$, $X_2 ,Y_2, Z_2\in C(TM_2)$. Consequently, $M_1\times M_2$ is parallel in $\mathbb{S}^n$ if and only if $M_i$ is parallel in $\mathbb{S}^{n_i}(1/\sqrt{2})$, $i=1,2$.
\end{proof}

\section{Open problems}
We list some open problems and conjectures that seem to be natural.

\begin{conjecture}\label{conj: 1}
The only proper biharmonic hypersurfaces in $\mathbb{S}^{m+1}$ are the open parts of hyperspheres $\mathbb{S}^m(1/\sqrt2)$ or of the standard products of spheres $\mathbb{S}^{m_1}(1/\sqrt2)\times\mathbb{S}^{m_2}(1/\sqrt2)$,
$m_1+m_2=m$, $m_1\neq m_2$.
\end{conjecture}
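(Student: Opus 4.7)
The plan is to reduce the conjecture to proving that every proper biharmonic hypersurface $\varphi:M^m\to\mathbb{S}^{m+1}$ is CMC; once constancy of $f=(\trace A)/m$ is established, the stated classification would follow by combining Theorem~\ref{th: hypersurf_2curv}, Theorem~\ref{teo:isoparametric}, and Theorem~\ref{th: hypersurf_3curv}, supplemented by an extension of the last to the non-compact case and to $\ell\geq 4$. After passing, if necessary, to the orientable double cover, I would fix a unit normal $\eta$ and set $A=A_\eta$ and $f=(\trace A)/m$, so that the entire weight of the problem is carried by the biharmonic system \eqref{eq: caract_bih_hipersurf_spheres}.

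To prove CMC I would argue by contradiction on the open set $U=\{p\in M:\grad_p f\neq 0\}$. On $U$, equation \eqref{eq: caract_bih_hipersurf_spheres}(ii) already supplies the decisive structural information: $E_1:=\grad f/|\grad f|$ is a principal direction of $A$ with principal curvature $\lambda_1=-mf/2$. Extending locally to a smooth orthonormal frame $\{E_1,\ldots,E_m\}$ of eigenvectors of $A$, with $A(E_\alpha)=\lambda_\alpha E_\alpha$ and $E_\alpha(f)=0$ for $\alpha\geq 2$ (on a neighborhood where the remaining eigenvalue multiplicities are locally constant), the aim is to exploit the Codazzi identity of Lemma~\ref{lem: nablaA} together with the trace identity $\trace(\nabla A)=m\grad f$ to derive an overdetermined system along the integral curves of $E_1$ for $f$ and the $\lambda_\alpha$'s. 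A first concrete step is to show, via $\langle(\nabla A)(E_\alpha,E_1),E_1\rangle=\langle(\nabla A)(E_1,E_\alpha),E_1\rangle$, that $\langle\nabla_{E_1}E_1,E_\alpha\rangle=0$ whenever $\lambda_\alpha\neq\lambda_1$, i.e.\ $E_1$ is a geodesic field. Substituting this back into \eqref{eq: caract_bih_hipersurf_spheres}(i), rewritten as $\Delta f=(m-\sum_i\lambda_i^2)f$ and expanded in the moving frame, is then intended to force the $\lambda_\alpha$'s to be functions of $f$ alone, eventually contradicting the hypothesis $\grad f\neq 0$ on $U$. Propagation from $U$ to all of $M$ would be ensured by the unique continuation principle for biharmonic maps already invoked in the proof of Theorem~\ref{th: jchen1}.

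The main obstacle, and the reason this remains a conjecture, is the genuine lack of rigidity in the Codazzi system once one drops the Dupin, isoparametric, compactness, or $|A|^2$-boundedness assumptions used in the partial results of Section~3: the transverse derivatives $E_1(\lambda_\alpha)$ are constrained only by the global trace identity $\sum_{\alpha\geq 2}E_1(\lambda_\alpha)=(3m/2)|\grad f|$ and a small number of further Codazzi consequences, and these leave too many degrees of freedom to close the ODE system outright. A realistic intermediate target would therefore be to first carry out the plan under the additional assumption that the number $\ell$ of distinct principal curvatures is locally constant on $U$—thereby extending Theorem~\ref{th: Dupin_bih_CMC} beyond the Dupin hypothesis—and then to handle the locus where $\ell$ jumps by means of the real-analyticity of the hypersurface together with a unique continuation argument.
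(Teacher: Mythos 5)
This statement is an open conjecture; the paper offers no proof of it, and your text is a research plan rather than a proof. You are candid about the main gap yourself: the Codazzi/moving-frame analysis on $U=\{\grad f\neq 0\}$ does not close into a contradiction without the auxiliary hypotheses (Dupin, isoparametric, compactness, bounds on $|A|^2$) that drive the partial results of Section~3, so the CMC step --- which is precisely Conjecture~\ref{conj: 2} of the paper --- is not established. Concretely, the identities you list ($E_1$ geodesic where $\lambda_\alpha\neq\lambda_1$, $\lambda_1=-mf/2$, $\sum_{\alpha\geq 2}E_1(\lambda_\alpha)=\tfrac{3m}{2}|\grad f|$) are exactly the ones used in the proofs of Theorem~\ref{th: jchen1} and Theorem~\ref{th: jchen2}, and there they only yield a conclusion after integration over a compact manifold or a curvature sign assumption; pointwise they underdetermine the system, as you note. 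Invoking real-analyticity and unique continuation to handle the locus where $\ell$ jumps is plausible but is not an argument.

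There is a second gap you should not gloss over: even granting CMC, the classification does not follow from the cited theorems. Theorem~\ref{th: hypersurf_2curv} requires at most two distinct principal curvatures, Theorem~\ref{teo:isoparametric} requires the hypersurface to be isoparametric, and Theorem~\ref{th: hypersurf_3curv} only excludes the \emph{compact} CMC case with exactly three distinct principal curvatures. CMC alone gives $|A|^2=m$ and constant scalar curvature, but the paper explicitly remarks that the full classification of CMC hypersurfaces in $\mathbb{S}^{m+1}$ with $|A|^2=m$ is not known; in particular CMC does not imply isoparametric or $\ell\leq 2$. So the reduction ``CMC $\Rightarrow$ Conjecture~\ref{conj: 1}'' is itself an open problem (this is why the paper lists statements 1--4 after Conjecture~\ref{conj: 1} as \emph{equivalent} reformulations rather than as consequences of CMC). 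Your proposal is a reasonable summary of the known strategy and its obstructions, but it proves neither half of the implication chain it sets up.
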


Taking into account the results presented in this paper, we have a series of statements equivalent to Conjecture~\ref{conj: 1}:
\begin{itemize}
\item[1.] A proper biharmonic hypersurface in $\mathbb{S}^{m+1}$ has at most two principal curvatures everywhere.
\item[2.] A proper biharmonic hypersurface in $\mathbb{S}^{m+1}$ is parallel.
\item[3.] A proper biharmonic hypersurface in $\mathbb{S}^{m+1}$ is CMC and has non-negative sectional curvature.
\item[4.] A proper biharmonic hypersurface in $\mathbb{S}^{m+1}$ is isoparametric.
\end{itemize}

One can also state the following intermediate conjecture.

\begin{conjecture}\label{conj: 2}
The proper biharmonic hypersurfaces in $\mathbb{S}^{m+1}$ are  CMC.
\end{conjecture}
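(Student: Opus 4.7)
The plan is to argue by contradiction: assume $\varphi:M^m\to\mathbb{S}^{m+1}$ is proper biharmonic with non-constant mean curvature function. By passing to the oriented double cover if needed (as in the proof of Theorem~\ref{th: jchen1}), I may assume $M$ is orientable and set $f=(\trace A)/m$; the set $U=\{p\in M:\grad_p f\neq 0\}$ is then a non-empty open subset of $M$. The plan is to show that $U$ is forced to be empty by combining the pointwise consequences of \eqref{eq: caract_bih_hipersurf_spheres}(ii) with an improved Bochner-type integral (or differential) identity in the spirit of Theorem~\ref{th: jchen1}, but this time insensitive to the sign of $|A|^2-m$.

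The first concrete step is to exploit the principal structure induced on $U$ by \eqref{eq: caract_bih_hipersurf_spheres}(ii): the unit vector $E_1=\grad f/|\grad f|$ is an eigenvector of $A$ with eigenvalue $\lambda_1=-mf/2$. Complete $E_1$ to a local orthonormal frame $\{E_1,\ldots,E_m\}$ where the $E_k$ ($k\geq 2$) diagonalise $A$ on $E_1^{\perp}$ with eigenvalues $\lambda_k$. Using the Codazzi equation (in the totally symmetric form of Lemma~\ref{lem: nablaA}), together with $E_k(f)=0$, I would derive structural relations for the $\lambda_k$'s and their derivatives along the integral curves of $E_1$. The idea is to combine these relations with the full biharmonic system \eqref{eq: caract_bih_hipersurf_spheres} to obtain that, on connected components of $U$, the function $f$ must satisfy an over-determined first/second order system whose only solutions are constants, contradicting $\grad f\neq 0$. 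In parallel, I would try to weaken the hypothesis $|A|^2\leq m$ in Theorem~\ref{th: jchen1}: using $\lambda_1=-mf/2$, the estimate \eqref{eq: |A|} gives $|A|^2\geq m^2(m+8)f^2/(4(m-1))$ on $U$, which can be fed back into the Bochner computation \eqref{eq: eq_int_1} to try to produce a test function $\Phi(f,|A|^2)$ whose Laplacian has a definite sign without any a priori bound on $|A|^2$.

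The main obstacle is exactly this last step. In the argument for Theorem~\ref{th: jchen1} the sign of the residual term $(m-|A|^2)|A|^2 f^2$ is controlled by the assumption $|A|^2\leq m$, and in Proposition~\ref{prop: |B|>m} the simpler identity \eqref{eq: delta_f^2} has definite sign under $|A|^2\geq m$. In the general setting $|A|^2-m$ can change sign on $M$, and I do not see how to pick a single auxiliary function whose Laplacian has the correct sign on both $\{|A|^2>m\}$ and $\{|A|^2<m\}$; this is presumably the reason the conjecture remains open. A reasonable intermediate target would be the non-compact case under the Omori--Yau hypotheses, or the case where $|A|^2$ has only mild oscillation around $m$, where a perturbation of the test function $|\grad f|^2+\tfrac{m^2}{8}f^4+f^2$ in Theorem~\ref{th: jchen1} might still yield a divergence identity with integrable sign.
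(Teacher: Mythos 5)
This statement is Conjecture~\ref{conj: 2} of the paper: it is posed as an open problem and the paper offers no proof of it, so there is nothing to compare your argument against on the paper's side. Your proposal, by its own admission, is not a proof either --- it is a programme that stalls exactly at the step that makes the conjecture hard. The concrete gap is the one you name yourself: after extracting from \eqref{eq: caract_bih_hipersurf_spheres}(ii) that $\grad f/|\grad f|$ is a principal direction with eigenvalue $-mf/2$ on $U=\{\grad f\neq 0\}$, and after feeding the lower bound \eqref{eq: |A|} into the Bochner identity \eqref{eq: eq_int_1}, you still need an auxiliary function whose Laplacian (or divergence) has a definite sign without any a priori control on the sign of $|A|^2-m$. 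No such function is produced, and the claimed ``over-determined system forcing $f$ constant'' on components of $U$ is asserted but never derived; the Codazzi relations along integral curves of $E_1$ do not by themselves close up into a contradiction in general dimension (this is precisely what succeeds only under extra hypotheses such as at most two or three distinct principal curvatures, the Dupin condition, or compactness with $|A|^2\leq m$ or $|A|^2\geq m$, i.e.\ Theorems~\ref{th: hypersurf_2curv}, \ref{th: Dupin_bih_CMC}, \ref{th: jchen1} and Proposition~\ref{prop: |B|>m}).

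Your diagnosis of the obstruction is accurate and consistent with why the authors leave the statement as a conjecture, and your suggested intermediate targets (Omori--Yau settings, mild oscillation of $|A|^2$ around $m$) are sensible directions. But as it stands the submission establishes nothing beyond what the cited partial results already give: it cannot be accepted as a proof of the conjecture.
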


Related to PMC immersions and, in particular, to Theorem~\ref{th: pmc2}, we propose the following problem.

\begin{problem}
Find a PMC proper biharmonic  immersion $\varphi:M^m\to\mathbb{S}^{n}$ such that
$A_{H}$ is not parallel.
\end{problem}

\end{document}